\newcommand{\rrVert}{\Vert}
\newcommand{\rrvert}{\vert}
\newcommand{\llVert}{\Vert}
\newcommand{\llvert}{\vert}
\newtheorem{theorem}{Theorem}
\newtheorem{lemma}[theorem]{Lemma}
\newtheorem{proposition}[theorem]{Proposition}
\newcommand{\one}{\mathbh{1}}
\renewcommand{\P}{\mathbb P}
\newcommand{\E}{\mathbb E}
\newcommand{\N}{\mathbb N}
\newcommand{\Q}{\mathbb Q}
\newcommand{\R}{\mathbb R}
\begin{document}
\begin{frontmatter}

\title{Strong uniqueness for stochastic evolution equations
in Hilbert spaces perturbed by a bounded measurable drift}
\runtitle{Uniqueness for stochastic evolution equations}

\begin{aug}
\author[A]{\fnms{G.} \snm{Da Prato}\ead[label=e1]{g.daprato@sns.it}},
\author[B]{\fnms{F.} \snm{Flandoli}\ead[label=e2]{flandoli@dma.unipi.it}},
\author[C]{\fnms{E.} \snm{Priola}\corref{}\thanksref{t1}\ead[label=e3]{enrico.priola@unito.it}}
\and
\author[D]{\fnms{M.} \snm{R\"{o}ckner}\thanksref{t2}\ead[label=e4]{roeckner@mathematik.uni-bielefeld.de}}
\runauthor{Da Prato, Flandoli, Priola and R\"{o}ckner}
\affiliation{Scuola Normale Superiore, Universit\`a di Pisa,
Universit\`a di Torino
and University of Bielefeld}
\address[A]{G. Da Prato\\
Scuola Normale Superiore\\
Piazza dei Cavalieri 7\\
56126 Pisa\\
Italy\\
\printead{e1}} %adresu isvedimo komanda gale!
\address[B]{F. Flandoli\\
Dipartimento di Matematica Applicata\hspace*{12.3pt}\\
\quad``U. Dini''\\
Universit\`a di Pisa\\
V.le B. Pisano 26/b\\
56126 Pisa\\
Italy\\
\printead{e2}}
\address[C]{E. Priola\\
Dipartimento di Matematica\\
Universit\`a di Torino\\
via Carlo Alberto 10, Torino\\
Italy\\
\printead{e3}}
\address[D]{M. R\"{o}ckner\\
Faculty of Mathematics\\
Bielefeld University\\
D-33501 Bielefeld\\
Germany\\
\printead{e4}}
\end{aug}

\thankstext{t1}{Supported in part by the
M.I.U.R. research project Prin 2008 ``Deterministic and stochastic
methods in the study of evolution problems.''}

\thankstext{t2}{Supported by the DFG through IRTG 1132 and CRC 701
and the I. Newton Institute, Cambridge, UK.}

% HISTORY:
\received{\smonth{9} \syear{2011}}
\revised{\smonth{1} \syear{2012}}

% ABSTRACT
%
\begin{abstract}
We prove pathwise (hence strong) uniqueness of solutions
to stochastic evolution equations in Hilbert spaces with merely
measurable bounded drift and cylindrical Wiener noise, thus
generalizing Veretennikov's fundamental result on $\R^d$ to infinite
dimensions. Because Sobolev regularity results implying continuity
or smoothness of functions do not hold on infinite-dimensional
spaces, we employ methods and results developed in the study of
Malliavin--Sobolev spaces in infinite dimensions. The price we pay is
that we can prove uniqueness for a large class, but not for every
initial distribution. Such restriction, however, is common in
infinite dimensions.
\end{abstract}

% KEYWORDS
%
\begin{keyword}[class=AMS]
\kwd{35R60}
\kwd{60H15}
\end{keyword}
\begin{keyword}
\kwd{Pathwise uniqueness}
\kwd{stochastic PDEs}
\kwd{bounded measurable drift}
\end{keyword}

\end{frontmatter}

%s1 #&#
\section{Introduction}\label{sec1}

We consider the following abstract stochastic differential equation
in a
separable Hilbert space $H$:
%
%e1 #&#
%
\begin{equation}
\label{SPDE} dX_{t}=\bigl(AX_{t}+B(X_{t})
\bigr)\,dt+dW_{t},\qquad X_{0}=x\in H,
\end{equation}
where $A\dvtx D(A)\subset H\rightarrow H$ is self-adjoint, negative
definite and such that $(-A)^{-1 + \delta}$, for some $\delta\in
(0,1)$,
is
of trace class,
$B\dvtx H\rightarrow H$ and $W=(W_{t})$ is a cylindrical Wiener process.
About $B$, we only assume that it is \textit{Borel measurable and
bounded}.%
\[
B\in{B}_{b}(H,H).
\]
Our aim is to prove \textit{pathwise uniqueness} for (\ref{SPDE}),
thus gaining an infinite-dimensional generalization\vadjust{\goodbreak} of the famous
fundamental
result of Veretennikov~\cite{Ver} in the case $H = \R^d$.
We refer to~\cite{Zv74} and~\cite{tre} for the case $H= \R$
as well as to the generalizations of~\cite{Ver} to unbounded drifts
in~\cite{KR05,Za} and also to the references therein; see~\cite{Gyongy-Krylov,GyongyMartinez}.
We note that~\cite{tre} also includes the case of $\alpha$-stable
noise, $\alpha\ge1$, which in turn was extended to $\R^d$ in
\cite{P}.

Explicit cases of parabolic stochastic partial differential equations,
with
space--time white noise in space-dimension one, have been solved on
various levels of generality for the drift by Gy\"{o}ngy and
coworkers, in a series of papers; see
\cite{AlabertGyongy,Gyongy,GyongyNualart,GyongyPardoux}
and the references therein. The difference of the
present paper with
respect to these works is that we obtain a general abstract result,
applicable, for instance, to systems of parabolic equations or
equations with differential operators of higher order than two. As
we shall see, the price to pay for this generality is a restriction
on the initial conditions. Indeed, using that for $B=0$ there
exists a unique nondegenerate (Gaussian) invariant measure $\mu$,
we will prove strong uniqueness for $\mu$-a.e. initial $x \in H$ or
random $H$-valued $x$ with distribution absolutely continuous with
respect to $\mu$.

At the abstract level, this work generalizes
\cite{DF} devoted to the case where $B$ is bounded and in addition
H\"{o}lder continuous, but with no restriction on the initial
conditions. To prove our result we use some ideas from
\cite{FF,FGP,Fla,DF} and~\cite{KR05}.

The extension of Veretennikov's result~\cite{Ver} and also of
\cite{KR05} to infinite dimensions has resisted various attempts of its
realization for many years. The reason is that the finite-dimensional
results heavily depend on advanced parabolic Sobolev regularity results
for solutions to the corresponding Kolmogorov equations. Such
regularity results, leading to continuity or smoothness of the
solutions, however, do not hold in infinite dimensions. A technique
different from~\cite{Ver} is used in~\cite{FGP}; see also~\cite{DF,FF}
and~\cite{P}. This technique allows us to prove uniqueness for
stochastic equations with time independent coefficients by merely using
elliptic (not parabolic) regularity results. In the present paper we
succeed in extending this approach to infinite dimensions, exploiting
advanced regularity results for elliptic equations in
Malliavin--Sobolev spaces with respect to a Gaussian measure on Hilbert
space. To the best of our knowledge this is the first time that an
analogue of Veretennikov's result has been obtained.

Given a filtered probability space $ (
\Omega, {\mathcal F},
({\mathcal F}_{t}), \P) $, a
cylindrical Wie\-ner process $W$ and an ${\mathcal F}_0$-measurable
r.v. $x$, we call \textit{mild solution} to the Cauchy problem
(\ref{SPDE}) a continuous ${\mathcal F}_{t}$-adapted $H$-valued
process $X = (X_t)$ such that
%
%e2 #&#
%
\begin{equation}
\label{mild} X_{t}=e^{tA}x+\int_{0}^{t}e^{ ( t-s ) A}B
( X_{s} ) \,ds+\int_{0}^{t}e^{ ( t-s ) A}\,dW_{s}.%
\end{equation}
Existence of mild solutions on some filtered probability space is
well known; see Chapter 10 in~\cite{DZ} and also Appendix~\ref{aapA.1}. Our
main result is:
%
%th1 #&#
%
\begin{theorem}
\label{maintheorem} Assume Hypothesis~\ref{hypo1}. For $\mu$-a.e.
(deterministic) $x\in H$, there is a unique (in the pathwise sense)
mild solution of the Cauchy problem~(\ref{SPDE}).

Moreover, for every ${\mathcal F}_{0}$-measurable $H$-valued r.v.
$x$ with law $\mu_{0}$ such that $\mu_{0}\ll\mu$ and
\[
\int_{H} \biggl( \frac{d\mu_{0}}{d \mu} \biggr)^{\zeta}\,d
\mu<\infty
\]
for some
$\zeta>1$, there is also a unique mild solution of the Cauchy
problem.
\end{theorem}

The proof, performed in Section~\ref{sectionproof}, uses regularity
results for elliptic equations in Hilbert spaces, given in Section~\ref{sec2}
where we also establish an It\^o type formula involving $u(X_t)$
with $u$ in some Sobolev space associated to~$\mu$. In comparison
with the finite-dimensional case (cf.~\cite{KR05}),
to prove such an It\^o formula,
we do not only need
analytic regularity results,
but also the fact that all transition probability functions
associated
with (\ref{mild}) are absolutely continuous with respect to $\mu$.
This result heavily depends on an infinite-dimensional version of
Girsanov's theorem. Though, also under our conditions, this is a
``folklore result'' in the field; it seems hard to find an
accessible reference in the literature. Therefore, we include a
complete proof of the version we need in the \hyperref[app]{Appendix}
for the
convenience of the reader.

Concerning the proof of Theorem~\ref{maintheorem} given in
Section~\ref{sec3}, we remark that, in comparison to the finite-dimensional
case (see, in particular,~\cite{Fedrizzi} and~\cite{FF}), it is
necesary to control infinite series of second derivatives of
solutions to Kolmogorov equations which is much more elaborate.

Examples are given in Section~\ref{sectionexamples}.

%s1.1 #&#
\subsection{Assumptions and preliminaries}\label{sec1.1}

We are given a real separable Hilbert space $H$ and denote its
norm and inner product by $\llvert\cdot\rrvert$ and
$ \langle\cdot, \cdot\rangle$, respectively. We follow
\cite{DZ,DZ1,D} and assume:

\begin{hypothesis}\label{hypo1}
% \footnote{\bf$\epsilon$ is changed in $\delta$ since $\epsilon$
% is already used in other situations, see for instance \eqref{e5}. }
$A\dvtx D(A)\subset H\to H$ is a negative
definite
self-adjoint operator and
$(-A)^{-1 + \delta}$, for some $\delta\in(0,1)$,
is of trace class.
% \footnote{should we relax this assumption ?}
\end{hypothesis}

Since $A^{-1}$ is compact, there exists an orthonormal basis $(e_k)$
in $H$ and a sequence of positive numbers $(\lambda_k)$ such that
%
%e3 #&#
%
\begin{equation}
\label{e1a} Ae_k=-\lambda_k e_k,\qquad
k\in\mathbb N.
\end{equation}
Recall that $A$ generates an analytic semigroup $e^{tA}$ on $H$ such
that $e^{tA} e_k = e^{- \lambda_k t } e_k$. We will consider a
cylindrical Wiener process $W_t$ with respect to the previous
basis~$(e_k)$. The process $W_t$ is formally given by ``$W_t = \sum_{k
\ge1} \beta_k(t) e_k$'' where $\beta_k(t)$ are independent,
one-dimensional Wiener process; see~\cite{DZ} for more details.

By $R_t$ we denote the Ornstein--Uhlenbeck semigroup in $B_b(H)$ (the
Banach space of Borel and bounded real functions endowed with the
essential supremum norm $\| \cdot\|_0$) defined as
%
%e4 #&#
%
\begin{equation}
\label{e1} R_t\varphi(x)=\int_H\varphi(y)N
\bigl(e^{tA}x,Q_t\bigr) (dy), \qquad\varphi\in
B_b(H),
\end{equation}
where $N(e^{tA}x,Q_t)$ is the Gaussian measure in $H$ of mean
$e^{tA}x$ and covariance operator $Q_t$ given by
%
%e5 #&#
%
\begin{equation}
\label{e2} Q_t=-\tfrac12 A^{-1}\bigl(I-e^{2tA}
\bigr),\qquad t\ge0.
\end{equation}
We notice that $R_t$ has a unique invariant measure $\mu:=N(0,Q)$
where $Q=-\frac12 A^{-1}$. Moreover, since under the previous
assumptions the Ornstein--Uhlenbeck semigroup is strong Feller and
irreducible, we have by Doob's theorem that, for any $t>0$, $x \in
H$, the measures $N(e^{tA}x,Q_t)$ and\vspace*{1pt} $ \mu$ are equivalent; see
\cite{DZ1}. On the other hand, our assumption that $(-A)^{-1+
\delta}$ is trace class guarantees that the OU process,
%
%e6 #&#
%
\begin{equation}
\label{ou11} Z_t = Z(t,x)= e^{tA} x + \int
_0^t e^{(t-s)A} \,dW_s
\end{equation}
has a continuous $H$-valued version.

If $H$ and $K$ are separable Hilbert spaces, the Banach space
$L^p(H, \mu$, $ K)$, $p \ge1$, is defined to consist of equivalent
classes of measurable functions $f\dvtx H \to K$ such that $\int_H
|f|_K^p \mu(dx) < +\infty$ [if $K = \R$ we set $L^p(H, \mu, \R)=
L^p(H, \mu)$]. We also use the notation $L^p(\mu)$ instead of
$L^p(H, \mu, K)$ when no confusion may arise.

The semigroup $R_t$ can be uniquely extended to a strongly
continuous semigroup of contractions on $L^p(H,\mu)$, $p \ge1$,
which we still denote by $R_t$, whereas we denote by $L_p$ (or $L$
when no confusion may arise) its infinitesimal generator, which is
defined on smooth functions $\varphi$ as
\[
L \varphi(x) = \tfrac{1}{2} \mbox{Tr}\bigl(D^2 \varphi(x)\bigr)
+ \bigl\langle Ax, D\varphi(x)\bigr\rangle,
\]
where $D \varphi(x)$ and $D^2 \varphi(x)$ denote, respectively, the
first and second Fr\'echet derivatives of $\varphi$ at $x \in H$.
For Banach spaces $E$ and $F$ we denote by $C^{k}_b (E,F)$, $k \ge
1$, the Banach space of all functions $f\dvtx E \to F$ which are bounded
and Fr\'echet differentiable on $E$ up to the order $k \ge1$ with
all derivatives bounded and continuous. We also set $C^{k}_b (E, \R)
= C^{k}_b (E)$.

According to~\cite{DZ1}, for any $\varphi\in B_b(H)$ and any $t>0$
one has
$R_t\varphi\in C^{\infty}_b(H) = \bigcap_{k \ge1} C^k_b(H)$. Moreover,
%
%e7 #&#
%
\begin{equation}
\label{e3} \bigl\langle DR_t\varphi(x),h \bigr\rangle=\int
_H \bigl\langle\Lambda_t h,Q_t^{-1/2}
y\bigr\rangle\varphi\bigl(e^{tA}x+y\bigr)N(0,Q_t) (dy),\qquad
h\in H,\hspace*{-22pt}
\end{equation}
where $Q_t$ is defined in (\ref{e2}),
%
%e8 #&#
%
\begin{equation}
\label{e4} \Lambda_t=Q_t^{-1/2}e^{tA}=
\sqrt2 (-A)^{1/2}e^{tA}\bigl(I-e^{2tA}
\bigr)^{-1/2}
\end{equation}
and $y \mapsto\langle
\Lambda_t h,Q_t^{-1/2} y\rangle$ is a centered
Gaussian random variable
under $\mu_t = N(0, Q_t)$ with variance $ |\Lambda_t h|^2$
for any $t>0$; cf.~\cite{DZ}, Theorem 6.2.2. Since
\[
\Lambda_t e_k =\sqrt2 (\lambda_k)^{1/2}e^{-t\lambda_k}
\bigl(1-e^{-2t\lambda_k}\bigr)^{-1/2} e_k,
\]
we see that, for any $\epsilon\in[0,\infty)$, there exists
$C_\epsilon>0$ such that
%
%e9 #&#
%
\begin{equation}
\label{e5} \bigl\|(-A)^\epsilon\Lambda_t \bigr\|_{\mathcal L}\le
C_\epsilon t^{-1/2-\epsilon}.
\end{equation}
In the sequel $\| \cdot\|$ always denotes the \textit{Hilbert--Schmidt
norm}; on the other hand $\| \cdot\|_{\mathcal L}$ indicates the
\textit{operator norm}.

%Hilbert-Schmidt norm.}

By (\ref{e3}) we deduce
%
%e10 #&#
%
\begin{equation}
\label{e6} \sup_{x \in H} \bigl|(-A)^\epsilon DR_t
\varphi(x)\bigr| = \bigl\|(-A)^\epsilon DR_t\varphi\bigr\|_0 \le
C_\epsilon t^{-1/2-\epsilon}\|\varphi\|_0,
\end{equation}
which by taking the Laplace transform yields, for
$\epsilon\in[0,1/2)$,
%
%e11 #&#
%
\begin{equation}
\label{e10b} %\sup_{x\in H} |(-A)^\epsilon D (\lambda- L_2)^{-1}
\bigl\|(-A)^\epsilon D (\lambda-
L_2)^{-1} \varphi\bigr\|_{0}\le\frac{C_{1,\epsilon}}{\lambda^{
1/2-\epsilon}} \|
\varphi\|_{0}.
\end{equation}
Similarly, we find
%
%e12 #&#
%
\begin{equation}
\label{e10a} \bigl\|(-A)^\epsilon DR_t\varphi\bigr\|_{L^2(\mu)}\le
C_\epsilon t^{-1/2-\epsilon} \|\varphi\|_{L^2(\mu)}
\end{equation}
and
%
%e13 #&#
%
\begin{equation}
\label{e12b} \bigl\|(-A)^\epsilon D (\lambda- L_2)^{-1}
\varphi\bigr\|_{L^2(\mu)}\le\frac{C_{1,\epsilon}} {
\lambda^{1/2-\epsilon}} \|\varphi\|_{L^2(\mu)}.
\end{equation}
Recall that
the Sobolev space $W^{2,p}(H, \mu)$, $p \ge1$, is defined in
\cite{CG1}, Section 3,
as the completion of a suitable set of smooth functions
endowed with the Sobolev norm; see also~\cite{DZ},
Section 9.2, for the case $p=2$ and~\cite{S}.
Under our initial assumptions,
the following result can be found in
\cite{DZ1}, Section 10.2.1.

%~\cite{DZ}, Section 9.2, and its generalizations, i.e.,
% $W^{2,p}_{\mu}(H)$, $p \ge1$, are considered in
%
%th2 #&#
%
\begin{theorem}
\label{t1}
Let $\lambda>0$, $f\in L^2(H,\mu)$ and let $\varphi\in D(L_2)$ be
the solution of the equation
%
%e14 #&#
%
\begin{equation}
\label{e10} \lambda\varphi-L_2\varphi=f.
\end{equation}
Then $\varphi\in W^{2,2}(H,\mu)$, $(-A)^{1/2}D\varphi\in
L^{2}(H,\mu)$ and there exists a constant $C(\lambda)$
such that
%
%e15 #&#
%
\begin{eqnarray}
\label{e11}
&&
\|\varphi\|_{L^{2}(\mu)} + \biggl( \int_H
\bigl\|D^2 \varphi(x)\bigr\|^2 \mu(dx)
\biggr)^{1/2} + \bigl\|(-A)^{1/2}D\varphi\bigr\|_{L^{2}(\mu)}\nonumber\\[-8pt]\\[-8pt]
&&\qquad\le C\|f
\|_{L^2(\mu).}\nonumber
\end{eqnarray}
%
%(here $\| \cdot\|_{HS}$ denotes the Hilbert-Schmidt %norm).
\end{theorem}
The following extension to $L^p(\mu)$, $p>1$ can be found in
Section 3 of~\cite{CG1}; see also~\cite{CG} and~\cite{MV}; a finite-dimensional
result analogous to this
for nonsymmetric OU operators was proved in
\cite{MPRS}.
%$L_p$ will denote the
%generator of the Ornstein-Uhlenbeck semigroup in $L^p(H, \mu)$.
%
%th3 #&#
%
\begin{theorem}
\label{t1cg}
Let $\lambda>0$, $f\in L^p(H,\mu)$ and let $\varphi\in D(L_p)$ be
the solution of the equation
%
%e16 #&#
%
\begin{equation}
\label{e10cg} \lambda\varphi-L_p \varphi=f.
\end{equation}
Then $\varphi\in W^{2,p}(H,\mu)$, $(-A)^{1/2}D\varphi\in
L^{p}(H,\mu;H)$ and there exists a constant $C= C(\lambda, p)$ such
that
%
%e17 #&#
%
\begin{eqnarray}
\label{e11cg}
&&
\|\varphi\|_{L^p(\mu)} + \biggl( \int_H
\bigl\|D^2 \varphi(x)\bigr\|^p \mu(dx)
\biggr)^{1/p} +\bigl\|(-A)^{1/2}D\varphi\bigr\|_{L^{p}(\mu)}\nonumber\\[-8pt]\\[-8pt]
&&\qquad\le C\|f
\|_{L^p(\mu)}.\nonumber
\end{eqnarray}
\end{theorem}

%s2 #&#
\section{Analytic results and an It\^o-type formula}\label{sec2}

%s2.1 #&#
\subsection{Existence and uniqueness for the Kolmogorov
%nonlinear
equation}\label{sec2.1}

We are here concerned with the equation
%
%e18 #&#
%
\begin{equation}
\label{e12} \lambda u-L_2 u-\langle B,Du \rangle=f,
\end{equation}
where $\lambda>0$, $f\in B_b(H)$ and $B\in B_b(H,H)$.
%
%re4 #&#
%
\begin{remark} Since
the corresponding Dirichlet form
\[
{\mathcal E}(u,v):= \int_H \langle Du, Dv \rangle\,d\mu-
\int_H \langle B, Du \rangle v \,d\mu+ \lambda\int
_H uv \,d\mu,
\]
$u,v \in W^{1,2}(\mu)$, is weakly sectorial for $\lambda$ big
enough,
it follows by~\cite{MR}, Chapter I and Subsection 3e) in Chapter II,
that
(\ref{e12}) has a unique solution in $D(L_2)$. However, we need
more regularity for $u$.
\end{remark}
%
%pr5 #&#
%
\begin{proposition}
\label{p2} Let $\lambda\ge\lambda_0$, where
%
%e19 #&#
%
\begin{equation}
\label{dt1} \lambda_0:=4\|B\|_0^2
C^2_{1,0}.
\end{equation}
Then there is a unique solution $u \in D(L_2)$ of (\ref{e12}) given
by
%
%e20 #&#
%
\begin{equation}
\label{e13} u= u_{\lambda}=(\lambda-L_2)^{-1}(I-T_\lambda)^{-1}f,
\end{equation}
where
%
%e21 #&#
%
\begin{equation}
\label{e14} T_\lambda\varphi:=\bigl\langle B,D(\lambda-L_2)^{-1}
\varphi\bigr\rangle.
\end{equation}
Moreover, $u \in C^1_b (H)$ with
%
%e22 #&#
%
\begin{equation}
\label{e15} \|u\|_0\le2\|f\|_0,\qquad
\bigl\|(-A)^\epsilon Du\bigr\|_0\le\frac{2C_{1,\epsilon}}{\lambda^{
1/2-\epsilon}} \|f
\|_0,\qquad \epsilon\in[0, 1/2),
\end{equation}
and, for any $p \ge2$, $u \in W^{2,p}(H, \mu)$ and,
for some $C=C(\lambda, p, \| B\|_0)$,
%
%e23 #&#
%
\begin{equation}
\label{cgp} \int_H \bigl\|D^2 u (x)
\bigr\|^p \mu(dx) \le C \int_H
\bigl|f(x)\bigr|^p \mu(dx).
\end{equation}
\end{proposition}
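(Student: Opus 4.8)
The plan is to construct the solution by the Neumann series \eqref{e13} and verify all the claimed properties using the estimates \eqref{e6}, \eqref{e10b}, \eqref{e10a}, \eqref{e12b} together with the regularity theorems \ref{t1} and \ref{t1cg}. First I would check that the operator $T_\lambda$ defined in \eqref{e14} is a bounded operator on $B_b(H)$ with $\|T_\lambda\|_{\mathcal L} \le \|B\|_0\,C_{1,0}\,\lambda^{-1/2}$: this is immediate from \eqref{e10b} with $\epsilon = 0$, since $|\langle B(x), D(\lambda-L_2)^{-1}\varphi(x)\rangle| \le \|B\|_0\,\|D(\lambda-L_2)^{-1}\varphi\|_0$. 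For $\lambda \ge \lambda_0 = 4\|B\|_0^2 C_{1,0}^2$ we get $\|T_\lambda\|_{\mathcal L} \le 1/2$, so $(I - T_\lambda)^{-1} = \sum_{n\ge 0} T_\lambda^n$ converges in $B_b(H)$ with norm $\le 2$. Hence $u = (\lambda - L_2)^{-1}(I - T_\lambda)^{-1} f \in D(L_2)$ is well defined, and applying $\lambda - L_2$ and rearranging gives $\lambda u - L_2 u = (I-T_\lambda)^{-1} f = f + T_\lambda (I-T_\lambda)^{-1} f = f + \langle B, Du\rangle$, which is exactly \eqref{e12}. Uniqueness in $D(L_2)$: if $u$ solves \eqref{e12} with $f = 0$, then $u = (\lambda - L_2)^{-1}\langle B, Du\rangle = T_\lambda u$, so $(I - T_\lambda)u = 0$, and since $\|T_\lambda\|_{\mathcal L} < 1$ this forces $u = 0$. (Here one should note $\langle B, Du\rangle \in B_b(H)$ because $Du$ is bounded, which follows a posteriori from the $C^1_b$ regularity, or one argues first in $L^2(\mu)$ using \eqref{e12b} and then upgrades.)

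Next I would establish the pointwise bounds \eqref{e15}. From $u = (\lambda - L_2)^{-1} g$ with $g := (I - T_\lambda)^{-1} f$ we have $\|g\|_0 \le 2\|f\|_0$. The bound $\|u\|_0 \le 2\|f\|_0$ follows since $(\lambda - L_2)^{-1} = \int_0^\infty e^{-\lambda t} R_t\,dt$ is a contraction (up to the factor $1/\lambda \le 1$; more precisely $\|(\lambda-L_2)^{-1}g\|_0 \le \lambda^{-1}\|g\|_0 \le \|g\|_0$ for $\lambda \ge 1$, and one checks $\lambda_0 \ge 1$ or absorbs constants). For the gradient bound, apply \eqref{e10b}: $\|(-A)^\epsilon Du\|_0 = \|(-A)^\epsilon D(\lambda - L_2)^{-1} g\|_0 \le C_{1,\epsilon}\lambda^{-(1/2-\epsilon)}\|g\|_0 \le 2 C_{1,\epsilon}\lambda^{-(1/2-\epsilon)}\|f\|_0$. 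Taking $\epsilon = 0$ shows $Du$ is bounded; combined with the formula $\langle DR_t\varphi(x), h\rangle$ in \eqref{e3} (which shows $R_t\varphi \in C^1_b$ for $\varphi \in B_b$, with the derivative depending continuously on $x$) and the representation $Du = \int_0^\infty e^{-\lambda t} DR_t g\,dt$, dominated convergence gives $u \in C^1_b(H)$.

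Finally, the $W^{2,p}$ estimate \eqref{cgp}. The point is that $u$ also solves a Kolmogorov equation with an $L^p(\mu)$ right-hand side: rewrite \eqref{e12} as $\lambda u - L_p u = f + \langle B, Du\rangle =: \tilde f$. Since $\|B\|_0 < \infty$ and $Du$ is bounded (from \eqref{e15} with $\epsilon = 0$), we have $\tilde f \in B_b(H) \subset L^p(H,\mu)$ for every $p$, with $\|\tilde f\|_{L^p(\mu)} \le \|f\|_{L^p(\mu)} + \|B\|_0\,\|Du\|_{L^p(\mu)}$. One still must control $\|Du\|_{L^p(\mu)}$; I would bootstrap: from \eqref{e12b} (the $L^2$ version) or, for general $p$, from the $L^p$ analogue of \eqref{e10a}–\eqref{e12b} obtained by taking Laplace transforms of the $L^p$-version of the smoothing estimate $\|DR_t\varphi\|_{L^p(\mu)} \le C t^{-1/2}\|\varphi\|_{L^p(\mu)}$ (which holds by the same Gaussian computation behind \eqref{e10a}, or by interpolation/hypercontractivity), one gets $\|Du\|_{L^p(\mu)} \le C_p \lambda^{-1/2}\|g\|_{L^p(\mu)}$. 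Since $\|g\|_{L^p(\mu)} = \|(I-T_\lambda)^{-1}f\|_{L^p(\mu)} \le 2\|f\|_{L^p(\mu)}$ (the Neumann series converges in $L^p(\mu)$ too, because $T_\lambda$ is also a contraction-type operator there by the $L^p$ gradient bound, for $\lambda$ large — possibly enlarging $\lambda_0$ in a $p$-dependent way, which is allowed since the constant in \eqref{cgp} is permitted to depend on $\lambda$, $p$, $\|B\|_0$), we obtain $\|\tilde f\|_{L^p(\mu)} \le C(\|B\|_0, p, \lambda)\|f\|_{L^p(\mu)}$. Now Theorem \ref{t1cg} applied to $\lambda u - L_p u = \tilde f$ yields $u \in W^{2,p}(H,\mu)$ and $\big(\int_H \|D^2 u\|^p\,d\mu\big)^{1/p} \le C\|\tilde f\|_{L^p(\mu)} \le C(\|B\|_0, p, \lambda)\|f\|_{L^p(\mu)}$, which is \eqref{cgp}.

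The main obstacle is the self-referential nature of the $W^{2,p}$ step: to invoke Theorem \ref{t1cg} one needs $\langle B, Du\rangle \in L^p(\mu)$, which in turn requires controlling $Du$ — so one must be careful to get the boundedness/integrability of $Du$ first (from the $B_b$-theory and \eqref{e15}, which is genuinely prior) before feeding $\tilde f$ into the $L^p$ elliptic regularity. A secondary subtlety is ensuring the Neumann series for $(I-T_\lambda)^{-1}$ converges simultaneously in $B_b(H)$ and in $L^p(H,\mu)$ on a common range of $\lambda$; this is handled by choosing $\lambda_0$ large enough (depending on $p$ and $\|B\|_0$), which is harmless for the statement.
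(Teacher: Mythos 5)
Your proposal is correct and follows essentially the same route as the paper: pass to $\psi:=\lambda u-L_2u$, solve $\psi-T_\lambda\psi=f$ by a Neumann series using that \eqref{e10b} and \eqref{e12b} make $T_\lambda$ a $\tfrac12$-contraction on $B_b(H)$ and on $L^2(\mu)$ for $\lambda\ge\lambda_0$, read \eqref{e15} off the resolvent gradient bounds, and obtain \eqref{cgp} by viewing $u$ as the solution of $\lambda u-L_pu=f+\langle B,Du\rangle$ and invoking Theorem \ref{t1cg}. Two small remarks. First, in your uniqueness argument the fixed-point identity should be $\psi=T_\lambda\psi$ for $\psi=\lambda u-L_2u$; your ``$u=T_\lambda u$'' conflates $u$ with $\psi$, since $T_\lambda$ acts on the image of $u$ under $\lambda-L_2$, not on $u$ itself (the conclusion is unaffected). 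Second, on the $W^{2,p}$ step you are right to insist on $\|\langle B,Du\rangle\|_{L^p(\mu)}\le C\|f\|_{L^p(\mu)}$ rather than the cruder $\|B\|_0\|Du\|_0\le C\|f\|_0$ that the paper's hint (``use \eqref{e15} with $\epsilon=0$'') yields, because \eqref{cgp} with $\|f\|_{L^p(\mu)}$ on the right is exactly what the summation over $n$ in Proposition \ref{stop} requires; your bootstrap via the $L^p$-smoothing of $DR_t$ achieves this, but rather than enlarging $\lambda_0$ with $p$ (which alters the statement) you can keep the stated $\lambda_0$ by noting that $T_\lambda$ has norm $\le\tfrac12$ on both $L^2(\mu)$ and $L^\infty(\mu)$, hence on $L^p(\mu)$ for every $p\in[2,\infty]$ by Riesz--Thorin interpolation; then $\|\psi\|_{L^p(\mu)}\le 2\|f\|_{L^p(\mu)}$ for $\psi=(I-T_\lambda)^{-1}f$, and Theorem \ref{t1cg} applied to $\lambda u-L_pu=\psi$ gives \eqref{cgp} directly, without estimating $Du$ in $L^p$ at all.
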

\begin{pf}
Setting $\psi:=\lambda u -L_2 u $, equation (\ref{e12}) reduces to
%
%e24 #&#
%
\begin{equation}
\label{e19b} \psi-T_\lambda\psi=f.
\end{equation}
If $\lambda\ge\lambda_0$ by (\ref{e12b}), we have
\[
\|T_\lambda\varphi\|_{L^2(\mu)}\le\tfrac12 \|\varphi
\|_{L^2(\mu)},\qquad \varphi\in L^2(\mu),
\]
so that (\ref{e19b}) has a unique solution given by
\[
\psi=(I-T_\lambda)^{-1}f.
\]
Consequently, (\ref{e12}) has a unique solution $u\in L^2(H,\mu)$
given by (\ref{e13}). The same argument in $B_b(H)$, using
(\ref{e10b}) instead of (\ref{e12b})
shows that
\[
\|T_\lambda\varphi\|_{0}\le\tfrac12 \|\varphi
\|_{0},\qquad \varphi\in B_b(H),
\]
and that $\psi\in B_b(H)$ and hence by (\ref{e13})
also $u \in B_b(H)$. In particular,
(\ref{e15}) is fulfilled by (\ref{e10b}). To prove the last
assertion we write $\lambda u-L_2 u = \langle B,Du \rangle+ f$ and
use estimate (\ref{e15}) with $\epsilon=0$ and Theorem~\ref{t1cg}.
\end{pf}

%s2.2 #&#
\subsection{Approximations}\label{sec2.2}
We are given two sequences $(f_n)\subset B_b(H)$ and $(B_n)\subset
B_b(H,H)$ such that:
%
%e25 #&#
%
\begin{eqnarray}
\label{e16} %
\mbox{(i)\quad\hspace*{1.5pt}} f_n(x)&\to& f(x),\qquad
B_n(x)\to B(x),\qquad \mu\mbox{-a.s.};
\nonumber\\[-8pt]\\[-8pt]
\mbox{(ii)\quad} \|f_n\|_0&\le& M,\qquad \|B_n
\|_0\le M.
\nonumber
\end{eqnarray}

%pr6 #&#
%
\begin{proposition}
\label{p3} Let $\lambda\ge\lambda_0$, where
$\lambda_0$ is defined in (\ref{dt1}).
%$$ \lambda_0=(2C_{1,\epsilon}M)^{2}.
%$$
Then the equation
%
%e26 #&#
%
\begin{equation}
\label{e17} \lambda u_n-L u_n-\langle
B_n,Du_n \rangle=f_n
\end{equation}
has a unique solution $u_n\in C_b^1(H) \cap D(L_2)$ given by
%
%e27 #&#
%
\begin{equation}
\label{e18} u_n=(\lambda-L)^{-1}(I-T_{n,\lambda})^{-1}
f_n,
\end{equation}
where
%
%e28 #&#
%
\begin{equation}
\label{e19} T_{n,\lambda} \varphi:=\bigl\langle B_n, D(
\lambda-L_2)^{-1} \varphi\bigr\rangle.
\end{equation}
Moreover,
%for any $\alpha\in(0,1)$,
for any $\epsilon\in[0, 1/2)$,
with
constants independent of $n$,
%
%e29 #&#
%
\begin{equation}
\label{e20} \|u_n\|_0\le2M,\qquad \bigl\|(-A)^\epsilon
Du_n\bigr\|_0 \le\frac{2C_{1,\epsilon}}{\lambda^{1/2-\epsilon}}M.
\end{equation}
Finally,
%possibly passing to a subsequence,
we have $u_n\to u$,
and
$Du_n\to Du$, in
$L^2(\mu)$, where $u$ is the solution to~(\ref{e12}).
\end{proposition}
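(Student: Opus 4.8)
The plan is to follow, essentially line for line, the proof of Proposition~\ref{p2}, with $B_n$ in the role of $B$, and then add a short argument passing to the limit at the level of $\psi_n:=\lambda u_n-L_2 u_n$. Since $\|B_n\|_0\le M$, estimates \eqref{e10b} and \eqref{e12b} with $\epsilon=0$ give, uniformly in $n$,
\[
\|T_{n,\lambda}\varphi\|_0\le\tfrac12\|\varphi\|_0\ \ (\varphi\in B_b(H)),\qquad
\|T_{n,\lambda}\varphi\|_{L^2(\mu)}\le\tfrac12\|\varphi\|_{L^2(\mu)}\ \ (\varphi\in L^2(\mu)),
\]
for $\lambda\ge\lambda_0$ (if $M>\|B\|_0$ one simply reads $\lambda_0$ as $4M^2C_{1,0}^2$; in the applications the approximation does not increase the sup norm, so $M=\|B\|_0$ is admissible). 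Hence $I-T_{n,\lambda}$ is boundedly invertible on both $B_b(H)$ and $L^2(\mu)$ by the Neumann series, with inverse of norm $\le 2$. Setting $\psi_n:=(I-T_{n,\lambda})^{-1}f_n$, the identity $\psi_n-T_{n,\lambda}\psi_n=f_n$ is, exactly as in Proposition~\ref{p2}, equivalent to \eqref{e17} for $u_n:=(\lambda-L_2)^{-1}\psi_n$, which gives \eqref{e18} and uniqueness in $D(L_2)$. Since $\|\psi_n\|_0\le 2M$ and $(\lambda-L_2)^{-1}=\int_0^\infty e^{-\lambda t}R_t\,dt$ maps $B_b(H)$ into $C^1_b(H)$ by the smoothing bound \eqref{e6}, we get $u_n\in C^1_b(H)\cap D(L_2)$, and \eqref{e10b} applied to $\psi_n$ yields \eqref{e20} with constants independent of $n$.

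For the convergence it suffices to prove $\psi_n\to\psi$ in $L^2(\mu)$, where $\psi:=(I-T_\lambda)^{-1}f$ is the analogous object for \eqref{e12}: indeed $u_n-u=(\lambda-L_2)^{-1}(\psi_n-\psi)$ and $Du_n-Du=D(\lambda-L_2)^{-1}(\psi_n-\psi)$, with $(\lambda-L_2)^{-1}$ bounded on $L^2(\mu)$ and $D(\lambda-L_2)^{-1}$ bounded from $L^2(\mu)$ to $L^2(\mu;H)$ by \eqref{e12b} (with $\epsilon=0$). By the second resolvent-type identity,
\[
\psi_n-\psi=(I-T_{n,\lambda})^{-1}(f_n-f)+(I-T_{n,\lambda})^{-1}(T_{n,\lambda}-T_\lambda)(I-T_\lambda)^{-1}f .
\]
The first summand tends to $0$ in $L^2(\mu)$ since $\|(I-T_{n,\lambda})^{-1}\|_{L^2(\mu)}\le 2$ uniformly in $n$ and $f_n\to f$ in $L^2(\mu)$ by dominated convergence ($|f_n|\le M$, $f_n\to f$ $\mu$-a.s., $\mu$ a probability measure). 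For the second summand it remains to check $(T_{n,\lambda}-T_\lambda)\psi\to 0$ in $L^2(\mu)$; but with $g:=D(\lambda-L_2)^{-1}\psi\in L^2(\mu;H)$ (again by \eqref{e12b}),
\[
\big|(T_{n,\lambda}-T_\lambda)\psi(x)\big|=\big|\langle B_n(x)-B(x),g(x)\rangle\big|\le |B_n(x)-B(x)|\,|g(x)|\le 2M\,|g(x)|,
\]
which tends to $0$ for $\mu$-a.e.\ $x$ by \eqref{e16}(i), so dominated convergence applies because $2M|g|\in L^2(\mu)$.

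The argument is essentially routine once it is phrased through $\psi_n$; the only points that require care are the uniform-in-$n$ contraction bound for $T_{n,\lambda}$ (hence the uniform invertibility of $I-T_{n,\lambda}$ and the $n$-independent constants in \eqref{e20}), the splitting of $\psi_n-\psi$ via the resolvent identity, and the fact that the dominating function $2M|g|$ in the last step genuinely lies in $L^2(\mu)$ — which is precisely the content of the gradient estimate \eqref{e12b}. No compactness or analytic input beyond Section~2 is needed.
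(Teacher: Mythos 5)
Your proposal is correct and follows essentially the same route as the paper: reduce everything to $\psi_n=(I-T_{n,\lambda})^{-1}f_n\to\psi=(I-T_\lambda)^{-1}f$ in $L^2(\mu)$ using the uniform contraction bound $\|T_{n,\lambda}\|\le\frac12$, then conclude for $u_n$ and $Du_n$ via the boundedness of $(\lambda-L_2)^{-1}$ and $D(\lambda-L_2)^{-1}$ from \eqref{e12b}, with both remaining terms handled by dominated convergence. The paper gets the same two terms $\|T_\lambda\psi-T_{n,\lambda}\psi\|_2$ and $\|f-f_n\|_2$ by a direct triangle-inequality absorption rather than your second resolvent identity, but the two manipulations are algebraically equivalent, and your remark on reading $\lambda_0$ as $4M^2C_{1,0}^2$ when $M>\|B\|_0$ is a fair (and harmless) clarification.
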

\begin{pf}
Set
\[
\psi_n:=(I-T_{n,\lambda})^{-1}f_n,\qquad
\psi:=(I-T_{\lambda})^{-1}f.\vadjust{\goodbreak}
\]
It is enough to show that
%
%e30 #&#
%
\begin{equation}
\label{e25b} \psi_n\to\psi\qquad\mbox{in } L^2(H,\mu).
\end{equation}
Let $\lambda\ge\lambda_0$, and write
\[
\psi-\psi_n=T_{\lambda}\psi-T_{n,\lambda}
\psi_n + f - f_n.
\]
Then, setting $\| \cdot\|_2 = \| \cdot\|_{L^2(\mu)}$,
\begin{eqnarray*}
\|\psi-\psi_n\|_{2}&\le&\|T_{n,\lambda}
\psi-T_{n,\lambda}\psi_n\|_{2} +\|T_{\lambda}
\psi-T_{n,\lambda}\psi\|_{2}+\|f-f_n\|_{2}
\\
&\le&\tfrac12 \|\psi-\psi_n\|_{2}+\|T_{\lambda}
\psi-T_{n,\lambda
}\psi\|_{2}+\|f-f_n\|_{2}.
\end{eqnarray*}
Consequently,
\[
\|\psi-\psi_n\|_{2}\le2\|T_{\lambda}
\psi-T_{n,\lambda}\psi\|_{2}+2\|f-f_n
\|_{2}.
\]
We also have
\[
\|T_{\lambda}\psi-T_{n,\lambda}\psi\|^2_{2} \le
\int_H\bigl|B(x)-B_n(x)\bigr|^2\bigl|D(\lambda-
L_2)^{-1} \psi(x)\bigr|^2\mu(dx).
\]
Therefore, by the dominate convergence theorem, it follows that
\[
{\lim_{n\to\infty}}\|T_{\lambda}\psi-T_{n,\lambda}\psi
\|_{2}=0.
\]
The conclusion follows.
\end{pf}

%s2.3 #&#
\subsection{Modified mild formulation}\label{sec2.3}

For any $i\in\mathbb N$ we denote the $i$th component of $B$ by
$B^{(i)}$, that is,
\[
B^{(i)}(x):=\bigl\langle B(x),e_i \bigr\rangle.
\]
Then for $\lambda\ge\lambda_0$ we consider the solution
$u^{(i)}$ of the equation
%
%e31 #&#
%
\begin{equation}
\label{e21} \lambda u^{(i)}-Lu^{(i)}-\bigl\langle B, D
u^{(i)} \bigr\rangle=B^{(i)},\qquad \mu\mbox{-a.s.}
\end{equation}
%
%th7 #&#
%
\begin{theorem}
\label{p4} Let $X_t$ be a mild solution of equation (\ref{SPDE})
on some filtered probability space,
let $u^{(i)}$ be the solution of (\ref{e21}) and set
$X^{(i)}_t=\langle X_t,e_i \rangle$. Then we have
%
%e32 #&#
%
\begin{eqnarray}
\label{e22} X_t^{(i)} &=& e^{-\lambda_i t}\bigl(\langle
x,e_i \rangle+u^{(i)}(x)\bigr) -u^{(i)}(X_t)
\nonumber
\\
&&{}+(\lambda+\lambda_i)\int_0^te^{-\lambda_i (t-s)}u^{(i)}(X_s)\,ds
\nonumber\\[-8pt]\\[-8pt]
&&{}+\int_0^te^{-\lambda_i (t-s)}\bigl(d\langle
W_s,e_i \rangle+\bigl\langle Du^{(i)}(X_s),dW_s
\bigr\rangle\bigr),\nonumber\\
&&\eqntext{t \ge0,\qquad \mathbb\P\mbox{-a.s.}}
\end{eqnarray}
\end{theorem}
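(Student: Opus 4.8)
The identity \eqref{e22} is an It\^o formula for $u^{(i)}$ along $X$ in disguise, and the plan is to derive it as such. First I would pair the mild equation \eqref{mild} with $e_i$ and use $e^{tA}e_i=e^{-\lambda_i t}e_i$ to get the scalar mild equation
\[
X_t^{(i)}=e^{-\lambda_i t}\langle x,e_i\rangle+\int_0^t e^{-\lambda_i(t-s)}B^{(i)}(X_s)\,ds+\int_0^t e^{-\lambda_i(t-s)}\,d\langle W_s,e_i\rangle .
\]
Then, \emph{formally}, It\^o's formula for $u^{(i)}(X_s)$ along the mild solution reads
\[
d\,u^{(i)}(X_s)=\big(Lu^{(i)}(X_s)+\langle B(X_s),Du^{(i)}(X_s)\rangle\big)\,ds+\langle Du^{(i)}(X_s),dW_s\rangle,
\]
and by the Kolmogorov equation \eqref{e21} the drift bracket equals $\lambda u^{(i)}(X_s)-B^{(i)}(X_s)$. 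Multiplying this differential identity by $e^{-\lambda_i(t-s)}$, integrating over $[0,t]$, integrating by parts the term $\int_0^t e^{-\lambda_i(t-s)}\,d\,u^{(i)}(X_s)$ (which contributes the boundary values $u^{(i)}(X_t)-e^{-\lambda_i t}u^{(i)}(x)$ and the extra integral $-\lambda_i\int_0^t e^{-\lambda_i(t-s)}u^{(i)}(X_s)\,ds$), solving for $\int_0^t e^{-\lambda_i(t-s)}B^{(i)}(X_s)\,ds$, and substituting into the scalar mild equation, one obtains exactly \eqref{e22}. The whole content of the theorem is therefore the rigorous justification of this It\^o formula, since $u^{(i)}$ lies only in $W^{2,p}(H,\mu)\cap C^1_b(H)$ — so that $D^2u^{(i)}$ and $Lu^{(i)}$ are defined only $\mu$-a.e. — and $X$ is merely a mild solution.

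To make this rigorous I would approximate. First, choose smooth bounded $B_n\to B$ $\mu$-a.e. with $\sup_n\|B_n\|_0<\infty$ and let $u_n^{(i)}$ be the solution of $\lambda u_n^{(i)}-Lu_n^{(i)}-\langle B_n,Du_n^{(i)}\rangle=B_n^{(i)}$ given by Proposition \ref{p3} (with $f_n=B_n^{(i)}$), so that $u_n^{(i)}\to u^{(i)}$ and $Du_n^{(i)}\to Du^{(i)}$ in $L^2(\mu)$ and the bounds \eqref{e20} hold uniformly in $n$. Since $B_n$ is continuous, $Lu_n^{(i)}=\lambda u_n^{(i)}-\langle B_n,Du_n^{(i)}\rangle-B_n^{(i)}$ is continuous and bounded; after a further regularization (convolving with $R_\varepsilon$, which by \eqref{e5}--\eqref{e6} turns $u_n^{(i)}$ into a $C^2_b$ function with $Du_n^{(i)}\in D(A)$ and $ADu_n^{(i)}$ bounded; or, equivalently, writing the finite-dimensional It\^o formula for $P_mX$, where $P_m$ is the orthogonal projection onto $\mathrm{span}\{e_1,\dots,e_m\}$, these processes solving genuine It\^o equations in $P_mH$ with adapted drift $P_mB(X_s)$, and letting $m\to\infty$) the classical It\^o formula applies to $u_n^{(i)}(X_t)$. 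Carrying out the computation of the previous paragraph for $u_n^{(i)}$ — whose drift bracket, by the equation it solves, now equals $\lambda u_n^{(i)}(X_s)-B_n^{(i)}(X_s)+\langle B(X_s)-B_n(X_s),Du_n^{(i)}(X_s)\rangle$ — produces \eqref{e22} with $u^{(i)}$ replaced by $u_n^{(i)}$ and with an extra error term
\[
R_n=\int_0^t e^{-\lambda_i(t-s)}\Big(\,(B^{(i)}-B_n^{(i)})(X_s)+\langle (B-B_n)(X_s),Du_n^{(i)}(X_s)\rangle\,\Big)\,ds .
\]

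The main obstacle is the passage to the limit $n\to\infty$. Because $u_n^{(i)}\to u^{(i)}$ and $Du_n^{(i)}\to Du^{(i)}$ only $\mu$-a.e. and in $L^2(\mu)$, to conclude I need that for every $s>0$ the law of $X_s$ is absolutely continuous with respect to $\mu$, together with a quantitative estimate of the form $\mathbb E\int_0^T|\varphi(X_s)|\,ds\le C_T\|\varphi\|_{L^1(\mu)}$ (equivalently with $L^2(\mu)$), with the density suitably integrable in $s$ near $0$; this same estimate is in fact already needed to give meaning to the term $Lu_n^{(i)}(X_s)$, hence to the It\^o formula itself. Granting it, dominated convergence (using \eqref{e20} to control the $Du_n^{(i)}$-factor) gives $R_n\to0$ and convergence of the Lebesgue integrals, while It\^o's isometry together with the same estimate gives $L^2$-convergence of the stochastic integral; passing to a subsequence and using continuity in $t$ of both sides then yields \eqref{e22}, $\mathbb P$-a.s., for all $t\ge0$ simultaneously. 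The required absolute continuity is precisely the point where the infinite-dimensional Girsanov theorem enters: it makes the law of $(X_s)$ equivalent to that of the Ornstein--Uhlenbeck process $Z(\cdot,x)$, whose time-$s$ marginal $N(e^{sA}x,Q_s)$ is equivalent to $\mu$, while the boundedness of $B$ provides, through a Novikov-type bound, the $L^p$-control of the density. Establishing and quantifying this absolute continuity — not the algebraic manipulations above — is where the real work lies.
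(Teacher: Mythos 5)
Your proposal follows essentially the same route as the paper: the same formal It\^o computation using \eqref{e21} and variation of constants, the same two-level approximation (smooth $B_n$ and $u_n^{(i)}$ from Proposition \ref{p3}, plus finite-dimensional projections to make It\^o's formula legitimate), and the same key ingredient --- absolute continuity of the law of $X_s$ with respect to $\mu$ via the infinite-dimensional Girsanov theorem (Theorem \ref{tA1}). One correction, though, on the limiting step: you assert that the passage $n\to\infty$ requires a quantitative bound of the form $\mathbb E\int_0^T|\varphi(X_s)|\,ds\le C_T\|\varphi\|_{L^1(\mu)}$ (or with $L^2(\mu)$). That estimate is essentially \eqref{gir}, which the authors explicitly state they can neither prove nor disprove in infinite dimensions (Remark \ref{finite}); if your argument genuinely hinged on it, it would be stuck. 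It does not: since the approximants $u_{n}^{(i)}$, $Du_{n}^{(i)}$ are uniformly bounded by \eqref{e20} and converge $\mu$-a.e., it suffices to know \emph{qualitatively} that $\pi_s(x,\cdot)\ll\mu$ for each $s>0$ and to apply dominated convergence on $[r,T]\times H$ with dominating function a constant times $\frac{d\pi_s(x,\cdot)}{d\mu}$, whose $ds\otimes\mu$-integral is $T-r$; the degeneracy at $s=0$ is then handled not by integrability of the density near $0$ but by proving the identity on $[r,t]$ for $r>0$ and letting $r\to0^+$ using continuity of $u^{(i)}$ and of the trajectories. With that adjustment your plan is exactly the paper's proof.
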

\begin{pf} The proof uses in an essential way that, for any
$t>0$, $x \in H$,
the law $\pi_t(x,\cdot)$ of $X_t = X(t,x)$ is equivalent
to $\mu$. This follows from Theorem~\ref{tA1} (Girsanov's
theorem) in the \hyperref[app]{Appendix},\vadjust{\goodbreak} by which the law on $C([0,T]; H)$
of $X(\cdot, x)$ is equivalent to the law of the solution of
(\ref{SPDE}) with $B=0$, that is, it is equivalent to the law of the OU
process $Z(t,x)$ given in (\ref{ou11}).
%Z(t,x):= e^{tA}x + \int_0^t e^{(t-s)A} \,dW_s, t \in[0,T].
In particular, their transition probabilities are equivalent. But it
is well known that the law of $Z(t,x)$ is equivalent to $\mu$ for
all $t >0 $ and $x \in H$ in our case; see~\cite{DZ}, Theorem 11.3.

Let us first describe a formal proof based on an heuristic use of
It\^o's formula, and then give the necessary rigorous details by
approximations.\vspace*{8pt}

\textit{Step} 1. Formal proof.

By It\^o's formula we have
\[
du^{(i)}(X_t)=\bigl\langle Du^{(i)}(X_t),dX_t
\bigr\rangle+\tfrac12 \operatorname{Tr} \bigl[D^2u^{(i)}(X_t)
\bigr] \,dt
\]
and so
\[
du^{(i)}(X_t)=Lu^{(i)}(X_t)\,dt +\bigl
\langle B(X_t),Du^{(i)} (X_t) \bigr\rangle\,dt +
\bigl\langle Du^{(i)}(X_t),dW_t \bigr\rangle.
\]
Now, using (\ref{e21}), we find that
%
%e33 #&#
%
\begin{equation}
\label{e28} du^{(i)}(X_t)= \lambda u^{(i)}(X_t)\,dt-B^{(i)}(X_t)\,dt+
\bigl\langle Du^{(i)}(X_t),dW_t \bigr\rangle.
\end{equation}
On the other hand, by (\ref{SPDE}) we deduce
\[
dX^{(i)}_t=-\lambda_iX^{(i)}_t\,dt+B^{(i)}(X_t)\,dt+dW_t^{(i)}.
\]
The expression for $B^{(i)}(X_t)$ that we get from this identity, we
insert into~(\ref{e28}). This yields
\[
dX^{(i)}_t=-\lambda_iX^{(i)}_t\,dt+
\lambda u^{(i)}(X_t)\,dt-du^{(i)}(X_t)+dW_t^{(i)}+
\bigl\langle Du^{(i)}(X_t),dW_t \bigr\rangle.
\]
By the variation of constants formula, this is equivalent to
\begin{eqnarray*}
X^{(i)}_t &=& e^{-\lambda_i t}\langle x,e_i
\rangle+\lambda\int_0^te^{-\lambda_i (t-s)}u^{(i)}(X_s)\,ds
\\
&&{}- \int_0^te^{-\lambda_i (t-s)}\,du^{(i)}(X_s)
+\int_0^te^{-\lambda_i (t-s)}
\bigl[dW_s^{(i)}+\bigl\langle Du^{(i)}(X_s),dW_s
\bigr\rangle\bigr].
\end{eqnarray*}
Finally, integrating by parts in the second integral yields (\ref
{e22}).\vspace*{8pt}

\textit{Step} 2. Approximation of $B$ and $u$.

Set
%
%e34 #&#
%
\begin{equation}
\label{e29} B_n(x)=\int_H B
\bigl(e^{A/n}x+y\bigr)N(0, Q_{1/n}) (dy),\qquad x\in H.
\end{equation}
Then $B_n$ is of $C^\infty$ class and all its derivatives are
bounded. Moreover, $\|B_n\|_0\le\|B\|_0$. It is easy to see that,
possibly passing to a subsequence,
%
%e35 #&#
%
\begin{equation}
\label{e30} B_n\to B,\qquad \mu\mbox{-a.s.} %\mbox{\rm in} L^2(H,\mu;H).
\end{equation}
[indeed $B_n\to B$ in $L^2(H,\mu;H)$; this result can
be first checked for continuous and bounded $B$].

Now we denote by $u_n^{(i)}$ the solution of the equation
%
%e36 #&#
%
\begin{equation}
\label{e31} \lambda u_n^{(i)}- Lu_n^{(i)}-
\bigl\langle B_n,Du_n^{(i)} \bigr
\rangle=B_n^{(i)},
\end{equation}
where $B_n^{(i)}=\langle B_n,e_i \rangle$. By Proposition~\ref{p3}
we have, possibly passing to a subsequence,
%
%e37 #&#
%
\begin{eqnarray}
\label{conv1}
\lim_{n\to\infty}u_n^{(i)}&=&u^{(i)},\qquad
\lim_{n\to\infty}Du_n^{(i)}=D u^{(i)},\qquad
\mu\mbox{-a.s.},
\nonumber\\[-8pt]\\[-8pt]
\sup_{n \ge1} \bigl\| u_n^{(i)} \bigr\|_{C^{1}_b(H)} &=&
C_{ i} < \infty,
\nonumber
\end{eqnarray}
where $u^{(i)}$ is the solution of (\ref{e21}).\vspace*{8pt}

\textit{Step} 3. Approximation of $X_t$.

For any $m\in\mathbb N$ we set $X_{m,t}:=\pi_m X_t$,
where $\pi_m=\sum_{j=1}^me_j\otimes e_j$. Then we have
%
%e38 #&#
%
\begin{equation}
\label{e32} X_{m,t}=\pi_mx+\int_0^tA_mX_{s}\,ds+
\int_0^t\pi_mB(X_{s})\,ds+
\pi_mW_t,
\end{equation}
where $A_m=\pi_mA$.

Now we denote by $u^{(i)}_{n,m}$ the solution of the equation
%
%e39 #&#
%
\begin{equation}
\label{e33} \lambda u^{(i)}_{n,m}- L u^{(i)}_{n,m}-
\bigl\langle\pi_m B_n \circ\pi_m,
Du^{(i)}_{n,m} \bigr\rangle=B_n^{(i)}
\circ\pi_m,
\end{equation}
where $(B_n \circ\pi_m) (x) = B_n ( \pi_m x)$, $x \in H$. Since only a
finite number of variables is involved, we have, equivalently,
\[
\lambda u^{(i)}_{n,m} - L_{m}u^{(i)}_{n,m}-
\bigl\langle\pi_m B_n \circ\pi_m,
Du^{(i)}_{n,m} \bigr\rangle=B_n^{(i)}
\circ\pi_m
\]
with
%
%e40 #&#
%
\begin{equation}
\label{e34} L_{m}\varphi=\tfrac12 \operatorname{Tr} \bigl[
\pi_mD^2\varphi\bigr]+\langle A_mx,D\varphi
\rangle.
\end{equation}
Moreover, since $u_{n,m}^{(i)}$ depends only on the first $m$
variables, we have
%
%e41 #&#
%
\begin{equation}
\label{uu} u_{n,m}^{(i)} (\pi_m y) =
u_{n,m}^{(i)} (y),\qquad y \in H, n,m,i \ge1.
\end{equation}
Applying a finite-dimensional It\^o formula to $u^{(i)}_{n,m}(X_{m,t})
= u^{(i)}_{n,m}(X_{t})$ yields
%
%e42 #&#
%
\begin{eqnarray}
\label{e35} %
du^{(i)}_{n,m}(X_{m,t})&=&
\tfrac12 \operatorname{Tr} \bigl[D^2u^{(i)}_{n,m}(X_{m,t})
\bigr]\,dt
\nonumber
\\
&&{}+\bigl\langle Du^{(i)}_{n,m}(X_{m,t}),
A_mX_t+\pi_mB(X_t) \bigr\rangle
\,dt
\\
&&{}+\bigl\langle Du^{(i)}_{n,m}(X_{m,t}),
\pi_md W_t \bigr\rangle.
\nonumber
\end{eqnarray}
On the other hand, by (\ref{e33}) we have
\begin{eqnarray*}
&&\lambda u^{(i)}_{n,m}(X_{m,t})-\tfrac12
\operatorname{Tr} \bigl[D^2u^{(i)}_{n,m}(X_{m,t})
\bigr]
\\
&&\quad{}-\bigl\langle Du^{(i)}_{n,m}(X_{m,t}),
A_mX_{m,t}+\pi_mB_n(X_{m,t})
\bigr\rangle\\
&&\qquad=B_n^{(i)}(X_{m,t}).
\end{eqnarray*}
Comparing with (\ref{e35})
yields
%
%e43 #&#
%
\begin{eqnarray}
\label{e36} du^{(i)}_{n,m}(X_{m,t}) &=&\lambda
u^{(i)}_{n,m}(X_{m,t})\,dt -B_n^{(i)}(X_{m,t})\,dt
\nonumber
\\
&&{}+\bigl\langle Du^{(i)}_{n,m}(X_{m,t}),
\pi_m\bigl(B(X_t)-B_n(X_{m,t})
\bigr) \bigr\rangle\,dt
\\
&&{}+\bigl\langle Du^{(i)}_{n,m}(X_{m,t}),
\pi_md W_t \bigr\rangle.
\nonumber
\end{eqnarray}
Taking into account (\ref{uu}), we rewrite (\ref{e36})
in the integral form as
%
%e44 #&#
%
\begin{eqnarray}
\label{si}
\quad&&u^{(i)}_{n,m}(X_{t}) -
u^{(i)}_{n,m}(X_{r})
\nonumber
\\
&&\qquad= \int_r^t \lambda
u^{(i)}_{n,m}(X_{s})\,ds - \int_r^t
B_n^{(i)}(X_{m,s})\,ds
\nonumber\\[-8pt]\\[-8pt]
&&\qquad\quad{}+ \int_r^t \bigl\langle Du^{(i)}_{n,m}(X_{s}),
\bigl(B(X_s)-B_n(X_{m,s})\bigr) \bigr\rangle
\,ds\nonumber\\
&&\qquad\quad{} + \int_r^t \bigl\langle
Du^{(i)}_{n,m}(X_{s}), dW_s \bigr
\rangle,
\nonumber
\end{eqnarray}
$t \ge r >0$. Let us fix $n$, $i \ge1$ and $x \in H$.

Possibly passing to a subsequence, and taking the limit in
probability (with respect to $\mathbb{P}$), from identity (\ref{si}),
we arrive at
%
%e45 #&#
%
\begin{eqnarray}
\label{e37} du_{n}^{(i)}(X_{t}) &=& \lambda
u_{n}^{(i)}(X_{t})\,dt-B_{n}^{(i)}%
(X_{t})\,dt
\nonumber\\
&&{}+\bigl\langle Du_{n}^{(i)}(X_{t}),
\bigl(B(X_{t})-B_{n}(X_{t})\bigr)\bigr\rangle\,dt\\
&&{}
+\bigl\langle Du_{n}^{(i)}(X_{t}),dW_t
\bigr\rangle,\qquad \mathbb{P}%
\mbox{-a.s.}
\nonumber
\end{eqnarray}
Let us justify such assertion.

First note that in equation (\ref{e33}) we have the drift term
$\pi_m B_n \circ\pi_m $ which converges pointwise
to $B_n$ and $B_n^{(i)} \circ\pi_m$ which
converges pointwise to $B_n^{(i)}$ as $m \to\infty$. Since such
functions are also uniformly bounded, we can apply Proposition
\ref{p3} and obtain that, possibly passing to a subsequence (recall
that $n$ is fixed),
%
%e46 #&#
%
\begin{eqnarray}
\label{conv12}
\lim_{m\to\infty}u_{n,m}^{(i)}&=&u^{(i)}_n,\qquad
\lim_{m\to
\infty}Du_{n,m}^{(i)}=D u^{(i)}_n,\qquad
\mu\mbox{-a.s.}, % \mbox{\rm
%in} L^2(H,\mu),$$
\nonumber\\[-8pt]\\[-8pt]
\sup_{m \ge1} \bigl\| u_{n,m}^{(i)} \bigr\|_{C^{1}_b(H)} &=&
C_{ i} < \infty.
\nonumber
\end{eqnarray}
Now we only consider the most involved terms in (\ref{si}).\vadjust{\goodbreak}

We have, using that the law $\pi_t(x, \cdot)$ of $X_t$ is absolutely
continuous with respect to $\mu$,
\begin{eqnarray*}
&&
\mathbb{E} \int_{r}^{t} \bigl|u_{n,m}^{(i)}(X_{s})
- u_{n}^{(i)}(X_{s})\bigr| \,ds\\
&&\qquad = \int
_{r}^{t} \,ds \int_H \bigl|
u^{(i)}_{n,m}(y)-u^{(i)}_{n}(y) \bigr|\,
\frac{d \pi_s(x,
\cdot)}{d \mu}(y) \mu(dy),
\end{eqnarray*}
which tends to 0,
as $m \to\infty$, by the dominated convergence
theorem [using~(\ref{conv12})].

This implies $\lim_{m\rightarrow\infty}\int_{r}^{t}\lambda
u_{n,m}^{(i)}(X_{s})\,ds =\int_{r}^{t}\lambda u_{n}^{(i)}(X_{s})\,ds $
in $L^{1}(\Omega, \mathbb{P})$.
Similarly,
we prove that $u^{(i)}_{n,m}(X_{t})$ and
$u^{(i)}_{n,m}(X_{r})$
converge, respectively, to $u^{(i)}_{n}(X_{t})$
and $u^{(i)}_{n}(X_{r})$ in $L^1$.

To show that
%
%e47 #&#
%
\begin{eqnarray}
\label{dop} &&\lim_{m \to\infty} \mathbb{E} \int_{r}^{t}
\bigl|\bigl\langle Du_{n,m}^{(i)}(X_{s}),
\pi_{m}\bigl(B(X_{s})-B_{n}(X_{m,s})
\bigr)\bigr\rangle
\nonumber\\[-8pt]\\[-8pt]
&&\hspace*{67pt}{}- \bigl\langle Du_{n}^{(i)}(X_{s}),
\bigl(B(X_{s})-B_{n}(X_{s})\bigr)\bigr\rangle
\bigr| \,ds =0,
\nonumber
\end{eqnarray}
it is enough to prove that $\lim_{m \to\infty} H_m + K_m =0$, where
\[
H_m = \mathbb{E} \int_{r}^{t} \bigl|\bigl
\langle Du_{n,m}^{(i)}(X_{s})- Du_{n}^{(i)}(X_{s}),
\pi_{m}\bigl(B(X_{s})-B_{n}(X_{m,s})
\bigr)\bigr\rangle\bigr| \,ds
\]
and
\[
K_m = \mathbb{E} \int_{r}^{t} \bigl|\bigl
\langle Du_{n}^{(i)}(X_{s}), \bigl[
\pi_{m}B(X_{s}) - B(X_{s})\bigr] +
\bigl[B_n(X_{s}) - \pi_m B_n(X_{m,s})
\bigr] \bigr\rangle\bigr| \,ds.
\]
It is easy to check that $\lim_{m \to\infty} K_m =0$. Let us deal
with $H_m$. We have
%
%e48 #&#
%
\begin{eqnarray}
\label{ft} H_m &\le& 2 \| B\|_0 \int
_{r}^{t} \mathbb{E} \bigl| Du_{n,m}^{(i)}(X_{s})-
Du_{n}^{(i)}(X_{s}) \bigr|\,ds
\nonumber\\[-8pt]\\[-8pt]
&\le&\int_{r}^{t} \,ds \int_H
\bigl| Du^{(i)}_{n,m}(y)-Du^{(i)}_{n}(y) \bigr|\,
\frac{d \pi_s(x, \cdot)}{d
\mu}(y) \mu(dy),
\nonumber
\end{eqnarray}
which tends to 0 as $m \to\infty$ by the dominated convergence
theorem [using~(\ref{conv12})].
This shows (\ref{dop}).

It remains to prove that
\[
\lim_{m \to\infty} \int_{r}^{t}\bigl\langle
Du_{n,m}^{(i)}(X_{s}), dW_s\bigr
\rangle=\int_{r}^{t}\bigl\langle
Du_{n}^{(i)}(X_{s}),dW_s\bigr
\rangle\qquad \mbox{in } L^{2}(\Omega,\mathbb{P}).
\]
For this purpose we use the isometry formula together
with
\[
\lim_{m\rightarrow\infty}\int_{r}^{t}\mathbb{E}\bigl
\llvert Du_{n,m} %%
^{(i)}(X_{s})-Du_{n}^{(i)}(X_{s})
\bigr\rrvert^{2}\,ds =0
\]
[which can be proved arguing as in (\ref{ft})].
Thus we have proved (\ref{e37}).\vadjust{\goodbreak}

In order to pass to
the limit as $n \to\infty$ in (\ref{e37}), we recall formula
(\ref{conv1}) and argue as before [using also that $\pi_t(x, \cdot)
\ll\mu$]. We find
%
%e49 #&#
%
\begin{eqnarray}
\label{e38}
u^{(i)}(X_{t}) - u^{(i)}{(X_{r})}
&=& \int_r^t \lambda
u^{(i)}(X_{s})\,ds - \int_r^t
B^{(i)}(X_{s})\,ds \nonumber\\[-8pt]\\[-8pt]
&&{}+ \int_r^t
\bigl\langle Du^{(i)}(X_{s}),
dW_s \bigr\rangle,
\nonumber
\end{eqnarray}
$t \ge r >0$. Since $u$ is continuous and trajectories of $(X_t)$ are
continuous,
we can pass to the limit
as $r \to0^+$ in (\ref{e38}), $\mathbb{P}$-a.s., and obtain an
integral identity on $[0,t]$.

But
\[
dX_{t}^{(i)}=-\lambda_{i}X_{t}^{(i)}\,dt+B^{(i)}(X_{t})\,dt+dW_{t}^{(i)},\qquad
\mathbb{P}\mbox{-a.s.}
\]
Now we proceed as in step 1. Namely, we derive $B^{(i)}(X_{t})$ from
the identity above and insert in (\ref{e38}); this yields
\[
dX_{t}^{(i)}=-\lambda_{i}X_{t}^{(i)}\,dt+
\lambda u^{(i)}(X_{t})\,dt-du^{(i)}%
(X_{t})+dW_{t}^{(i)}+\bigl\langle
Du^{(i)}(X_{t}),dW_t\bigr\rangle,
\]
$\mathbb{P}$-a.s. Then we use the variation of constants formula.
\end{pf}
%
%re8 #&#
%
\begin{remark}
Formula (\ref{e38}) with $r=0$ seems to be of independent interest.
As an application, one can deduce, when $x \in H$ is
deterministic,
the representation formula
\[
{\mathbb E}\bigl[u^{(i)}(X_t)\bigr] = \int
_0^{\infty} e^{-\lambda t } {\mathbb E}
\bigl[B^{(i)}(X_t)\bigr] \,dt.
\]
This follows by taking the Laplace transform in both sides of
(\ref{e38}) (with $r=0$) and integrating by parts with respect to
$t$.
\end{remark}

%%%% tolto lemma
The next lemma shows that $u(x) = \sum_{k \ge1} u^{(k)}(x) e_k$
[$u^{(k)}$ as in (\ref{e21})] is a well-defined function which
belongs to $C^1_b(H,H)$. Recall that $\lambda_0$ is defined in
(\ref{dt1}).
%
%le9 #&#
%
\begin{lemma} \label{de}
For $\lambda$
sufficiently large, that is, $\lambda\ge\widetilde\lambda$,
with $\widetilde\lambda= \widetilde\lambda(A, \| B\|_0)$,
there exists a unique $u = u_{\lambda} \in C^{1}_b(H,H)$ which
solves
\[
u(x) = \int_0^{\infty} e^{-\lambda t }
R_t \bigl(Du(\cdot) B(\cdot) + B(\cdot) \bigr) (x)\,dt,\qquad x \in H,
\]
where $R_t$ is the OU semigroup defined as in (\ref{e1}) and acting
on $H$-valued functions. Moreover, we have the following assertions:

\begin{longlist}
\item
Let $\epsilon\in[0, 1/2[$. Then, for any $h \in H$,
$(-A)^{\epsilon}Du(\cdot)[h]
\in C_b (H, H)$ and $\|(-A)^{\epsilon}Du(\cdot)[h] \|_0
$ $ \le C_{\epsilon, \lambda} |h|$;

\item for any $k \ge1$, $\langle u(\cdot), e_k\rangle= u^{(k)}$,
where $u^{(k)}$ is the solution defined in~(\ref{e21});\vadjust{\goodbreak}

\item there exists $c_{3}= c_3(A, \|B \|_0)>0$ such that, for any
$\lambda\ge\widetilde\lambda$, $u = u_{\lambda}$ satisfies
%
%e50 #&#
%
\begin{equation}
\label{grado} \| Du \|_0 \le\frac{c_3}{\sqrt{\lambda}}.
\end{equation}
\end{longlist}
\end{lemma}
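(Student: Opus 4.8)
The plan is to obtain $u$ as the fixed point of the $H$-valued integral operator that defines it, and then to read off (i)--(iii) from the fixed-point identity. The only genuinely infinite-dimensional ingredient is a smoothing estimate for the Ornstein--Uhlenbeck semigroup acting on $H$-valued maps: for bounded Borel $g\colon H\to H$ (with components $g^{(k)}=\langle g,e_k\rangle$), $t>0$ and $\epsilon\in[0,1/2)$, the map $R_tg:=\sum_k(R_tg^{(k)})e_k$ lies in $C^1_b(H,H)$, with $\|R_tg\|_0\le\|g\|_0$ and
\[
\Big(\sum_k\big|\langle(-A)^\epsilon D(R_tg^{(k)})(x),h\rangle\big|^2\Big)^{1/2}\le C_\epsilon\,t^{-1/2-\epsilon}|h|\,\|g\|_0,\qquad h,x\in H,
\]
where $\|g\|_0:=\sup_x|g(x)|_H$. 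I would get this from \eqref{e3}: since $\langle(-A)^\epsilon D(R_tg^{(k)})(x),h\rangle=\int_H\langle(-A)^\epsilon\Lambda_th,Q_t^{-1/2}y\rangle\,g^{(k)}(e^{tA}x+y)\,N(0,Q_t)(dy)$, Cauchy--Schwarz in $L^2(N(0,Q_t))$ followed by summation over $k$ — using $\sum_k\int g^{(k)}(\cdot)^2\,dN(0,Q_t)=\int|g(\cdot)|_H^2\,dN(0,Q_t)\le\|g\|_0^2$ and $|(-A)^\epsilon\Lambda_th|\le C_\epsilon t^{-1/2-\epsilon}|h|$ from \eqref{e5} — gives the bound; the case $\epsilon=0$ shows that the partial sums $\sum_{k\le N}(R_tg^{(k)})e_k$ are Lipschitz with a common constant, hence $R_tg\in C_b(H,H)$, and the standard Gateaux-to-Fr\'echet argument upgrades this to $C^1_b(H,H)$.

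Next I would set up the contraction. On $Y:=C^1_b(H,H)$ with norm $\|v\|_Y:=\|v\|_0+\|Dv\|_0$ (operator norm for $Dv(x)$) define
\[
\Gamma(v)(x):=\int_0^\infty e^{-\lambda t}R_t\big(Dv(\cdot)B(\cdot)+B(\cdot)\big)(x)\,dt .
\]
The integrand $x\mapsto Dv(x)B(x)+B(x)$ is bounded Borel with sup-norm $\le(\|Dv\|_0+1)\|B\|_0$, so by the smoothing estimate ($\epsilon=0$) $\Gamma$ maps $Y$ into $Y$; moreover $\Gamma v_1-\Gamma v_2=\int_0^\infty e^{-\lambda t}R_t\big((Dv_1-Dv_2)(\cdot)B(\cdot)\big)dt$, whence
\[
\|\Gamma v_1-\Gamma v_2\|_Y\le\big(\lambda^{-1}+C_{1,0}\lambda^{-1/2}\big)\|B\|_0\,\|v_1-v_2\|_Y,
\]
with $C_{1,0}$ the constant of \eqref{e10b}. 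Hence for $\lambda\ge\tilde\lambda=\tilde\lambda(A,\|B\|_0)$ large, $\Gamma$ is a strict contraction on $Y$, with a unique fixed point $u=u_\lambda\in C^1_b(H,H)$; this is the desired solution. Inserting $u=\Gamma(u)$ into the same estimate gives $\|Du\|_0\le C_{1,0}\lambda^{-1/2}\|B\|_0(\|Du\|_0+1)$, so, choosing $\tilde\lambda$ with $C_{1,0}\|B\|_0\lambda^{-1/2}\le\tfrac12$, one gets $\|Du\|_0\le 2C_{1,0}\|B\|_0/\sqrt{\lambda}$, which is (iii) with $c_3=2C_{1,0}\|B\|_0$; similarly $\|u\|_0\le\lambda^{-1}\|B\|_0(\|Du\|_0+1)<\infty$.

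For (ii), write $u^{(k)}:=\langle u,e_k\rangle\in C^1_b(H)$ and take the inner product of $u=\Gamma(u)$ with $e_k$; since $\langle Du(z)B(z),e_k\rangle=\langle B(z),Du^{(k)}(z)\rangle$, this yields $u^{(k)}=\int_0^\infty e^{-\lambda t}R_t(\langle B,Du^{(k)}\rangle+B^{(k)})\,dt$, i.e.\ the mild form of \eqref{e21}. As the scalar version of the contraction above shows this equation has at most one solution in $C^1_b(H)$ for $\lambda$ large, and the solution of \eqref{e21} furnished by Proposition \ref{p2} is in $C^1_b(H)$ and solves it, we conclude $\langle u,e_k\rangle=u^{(k)}$. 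For (i), fix $\epsilon\in[0,1/2)$; by \eqref{e15} each $Du^{(k)}(x)\in D((-A)^\epsilon)$, so for $h\in H$ we may set $(-A)^\epsilon Du(x)[h]:=\sum_k\langle(-A)^\epsilon Du^{(k)}(x),h\rangle e_k$, and from the scalar mild identity for $u^{(k)}$, the closedness of $(-A)^\epsilon$, Minkowski's integral inequality for the $\ell^2$ norm over $k$, and the smoothing estimate,
\[
\Big(\sum_k\langle(-A)^\epsilon Du^{(k)}(x),h\rangle^2\Big)^{1/2}\le\int_0^\infty e^{-\lambda t}C_\epsilon t^{-1/2-\epsilon}|h|\,(\|Du\|_0+1)\|B\|_0\,dt\le C_{\epsilon,\lambda}|h| ;
\]
continuity in $x$ follows as for the first step. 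This is (i).

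The main obstacle is the smoothing estimate of the first step: the scalar bound \eqref{e6} is of no use here, since summing it over coordinates would cost $\sum_k\|B^{(k)}\|_0^2$, which need not be finite. One has to keep the Gaussian integral of \eqref{e3} intact, apply Cauchy--Schwarz there, and only afterwards sum over $k$, so that the coordinate sum meets $\int|B|_H^2\,dN(0,Q_t)\le\|B\|_0^2$ — the pointwise bound on $|B|_H$ being all that is available. A secondary technical point is to verify that $R_t$ really sends bounded Borel $H$-valued maps into $C^1_b(H,H)$ (not merely into bounded maps); once these two facts are in hand, the contraction, the bound (iii), the component identification (ii), and the refined estimate (i) are the expected vector-valued transcriptions of the scalar arguments of Section 2.
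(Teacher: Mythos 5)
Your proposal is correct and follows essentially the same route as the paper: a fixed-point argument for the mild integral equation in $C^1_b(H,H)$, with (i)--(iii) read off from the fixed-point identity, the component equation, and the self-improving gradient bound. The only difference is one of detail rather than substance: you spell out the vector-valued smoothing estimate for $R_t$ (Cauchy--Schwarz inside the Gaussian representation \eqref{e3} before summing over coordinates, so that the sum meets $\int |B|_H^2\,dN(0,Q_t)\le \|B\|_0^2$ rather than $\sum_k\|B^{(k)}\|_0^2$), a point the paper handles implicitly by invoking \eqref{e3} and \eqref{e10b} for $R_t$ ``acting on $H$-valued functions.''
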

\begin{pf} Let $E = C^1_b(H,H)$, and define the operator
$S_{\lambda}$,
\[
S_{\lambda} v (x)= \int_0^{\infty}
e^{-\lambda t } R_t \bigl(Dv(\cdot) B(\cdot) + B(\cdot) \bigr)
(x)\,dt,\qquad v \in E, x \in H.
\]
To prove that $S_{\lambda}\dvtx E \to E$, we take into account estimate
(\ref{e10b}) with $\epsilon=0$. Note that to check the Fr\'echet
differentiability of $S_{\lambda} v$ in each $x \in H$, we first show
its G\^ateaux differentiability. Then using formulas (\ref{e3})
and (\ref{e10b}), we obtain the continuity of
the G\^ateaux derivative from $H$ into $L(H)$. [$L(H)$ denotes
the Banach
space of all
bounded linear operators from $H$ into $H$ endowed with
$\| \cdot\|_{\mathcal L}$], and this
implies, in particular, the Fr\'echet
differentiability.

For $\lambda\ge(\lambda_0 \vee2 \| B\|_0) $,
$S_{\lambda}$ is a contraction and
so
there exists a unique $u \in E$
which solves $u = S_{\lambda} u$.
Using again (\ref{e10b}) we obtain (i). Moreover, (ii)~can be
deduced from the fact that, for each $k \ge1$,
$u_k = \langle u(\cdot), e_k \rangle$ is the unique solution to the equation
\[
u_k(x) = \int_0^{\infty}
e^{-\lambda t } R_t \bigl(\bigl\langle D u_k(\cdot), B(
\cdot) \bigr\rangle+ B^{k}(\cdot) \bigr) (x)\,dt,\qquad x \in H,
\]
in $C^1_b(H)$ (the uniqueness follows by the contraction principle)
and also the function $u^{(k)} \in C_b^1(H)$ given in (\ref{e21})
solves such equation. Finally (iii) follows easily from the estimate
\[
\| Du\|_0 \le\frac{C_{1,0}}{ \lambda^{1/2} } \bigl(\| Du\|_0 \| B
\|_0 + \| B\|_0 \bigr),\qquad \lambda\ge\bigl(\lambda_0
\vee\| B\|_0\bigr).
\]
\upqed
\end{pf}

%}
%%% fine

%s3 #&#
\section{\texorpdfstring{Proof of Theorem \protect\ref{maintheorem}}{Proof of Theorem 1}}\label{sectionproof}
\label{sec3}
%on $\omega$ also.}

We start now the proof of pathwise uniqueness.

Let $X= (X_{t})$ and $Y = (Y_{t})$ be two continuous
${\mathcal F}_{t}$-adapted mild solutions (defined on the same filtered
probability space, solutions with respect to the same cylindrical
Wiener process), starting from the same $x $.

For the time being, $x$ is not specified (it may be also random,
${\mathcal F}_{0}$-measura\-ble). In the last part of the proof a
restriction on $x$ will emerge.

Let us fix $T>0$. Let $u = u_{\lambda}\dvtx H \to H$ be such that
%as in Lemma~\ref{de},
$u(x) = \sum_{i \ge1} u^{(i)}(x)e_i$, $x \in H$, where
$u^{(i)} = u^{(i)}_{\lambda} $
solve (\ref{e21}) for some $\lambda$
%% for $\lambda\ge\lambda_{00} $
%($\epsilon\in[0, 1/2[$
% will be fixed in the sequel).
large enough; see Proposition~\ref{p2}.

By (\ref{grado}) we may assume
%$\lambda$ large enough such
that
$ \| Du \|_{0}\leq1/2$.
We have, for $t \in[0,T]$,
\begin{eqnarray*}
X_{t}-Y_{t}
&=& u ( Y_{t} ) -u (
X_{t} )\\
&&{} + ( \lambda-A ) \int_{0}^{t}e^{ ( t-s ) A}
\bigl( u ( X_{s} ) -u ( Y_{s} ) \bigr) \,ds
\\
&&{} + \int_{0}^{t}e^{ ( t-s ) A} \bigl( Du (
X_{s} ) -Du( Y_{s})\bigr) \,dW_{s}.
\end{eqnarray*}
It follows that
\begin{eqnarray*}
\llvert X_{t}-Y_{t}\rrvert&\leq&\frac{1}{2}\llvert
X_{t} -Y_{t}\rrvert+ \biggl| ( \lambda-A ) \int
_{0}^{t}e^{ ( t-s ) A} \bigl( u (
X_{s} ) -u ( Y_{s} ) \bigr) \,ds \biggr|
\\
&&{}+ \biggl| \int_{0}^{t}e^{ ( t-s ) A} \bigl( Du (
X_{s} ) -Du( Y_{s})\bigr) \,dW_{s} \biggr|.
%+\left|\int_{0}^{t}\int_{0}^{1}e^{\left( t-s\right)
%A}D^{2}u\left( Z_{s}^{r}\right) \,dr\left( X_{s}-Y_{s}\right)
%dW_{s}\right|
\end{eqnarray*}
%
%In the sequel $C$ denotes any constant which may depend on the
%assumptions on $A$, $B$, $T$, $\epsilon$.
% and on the final chosen
%value of $\lambda$.
%In the sequel we set
% \left( \lambda-A\right) \int_{0}^{t}e^{\left( t-s\right) A}\left(
%u\left( X_{s}\right) -u\left( Y_{s}\right) \right) \,ds
% where we recall that $\| \cdot\|_{\mathcal L}$ indicates the operator
%norm, and therefore, by a well-known inequality for analytic
%semigroups
% and Lemma~\ref{de},
%and the bound (\ref{e15}),
% we have
%t-s\right)
%^{-\left( 1-\varepsilon\right) }\left| X_{s}-Y_{s}\right|\,ds\
%& +2\left|
% \int_{0}^{t}\big( \int_{0}^{1}e^{\left( t-s\right)
%A}D^{2}u\left( Z_{s}^{r}\right) \,dr\big) \left( X_{s}-Y_{s}\right)
%dW_{s}
%X_{s}\right) -Du\left( Y_{s}\right) \right) \,dW_{s}
% \right|.
%Setting $Z_t^r =
% r X_t + (1-r)Y_t$ we may also write
Let $\tau$ be a stopping time to be specified later.
Using that $1_{[0, \tau]}(t)
= 1_{[0, \tau]}(t) \cdot$ $ 1_{[0, \tau]}(s)$,
$0 \le s \le t \le T$,
we have (cf.~\cite{DZ}, page 187)
\begin{eqnarray*}
&&1_{[0, \tau]}(t) \llvert X_{t}-Y_{t}\rrvert\\
&&\qquad\leq C
1_{[0, \tau]}(t) \biggl| ( \lambda-A ) \int_{0}^{t}e^{ ( t-s ) A}
\bigl( u ( X_{s} ) -u ( Y_{s} ) \bigr) \,ds \biggr|
\\
&&\qquad\quad{} + C\biggl\llvert1_{[0, \tau]}(t) \int_{0}^{t}e^{ ( t-s )
A}
\bigl( Du ( X_{s} ) -Du ( Y_{s} ) \bigr)
1_{[0, \tau]}(s) \,dW_{s} %\big(
% D^{2}u\left( Z_{s}^{r}\right) \,dr \big) 1_{[0, \tau]}(s) \left( X_{s
%}-Y_{s}\right) \,dW_{s}
\biggr\rrvert,
\end{eqnarray*}
where by $C$ we denote any constant which may depend on the
assumptions on $A$, $B$ and $T$.

Writing $1_{[0,\tau]}(s) X_{s}=\widetilde{X}_{s}$ and $1_{[0,\tau]}%
(s) Y_{s}=\widetilde{Y}_{s}$, and, using the Burkholder--Davis--Gundy
inequality with a large exponent $q>2$ which will be determined
below, we obtain (recall that $\Vert\cdot\Vert$ is the
Hilbert--Schmidt norm (cf.~\cite{DZ}, Chapter 4) with $C = C_q$),
\begin{eqnarray*}
&& {\mathbb E} \bigl[ \llvert\widetilde{X}_{t}-\widetilde{Y}_{t}
\rrvert^{q} \bigr] \\
&&\qquad\leq C {\mathbb E} \biggl[
e^{\lambda q t} \biggl\llvert( \lambda-A ) \int_{0}^{t}e^{ ( t-s ) A}
e^{- \lambda s} \bigl( u( X_{s}) -u( Y_{s}) \bigr)
1_{[0,\tau]}(s) \,ds \biggr\rrvert^q \biggr]
\\
&&\qquad\quad{} +C{\mathbb E} \biggl[ \biggl( \int_{0}^{t}1_{[0,\tau]}(s)
\bigl\llVert e^{ ( t-s ) A} \bigl( Du ( X_{s} ) -Du (
Y_{s} ) \bigr) \bigr\rrVert^{2}\,ds
\biggr)^{q/2} \biggr].
\end{eqnarray*}
In the sequel we introduce a parameter $\theta>0$, and $C_{\theta}$
will denote suitable constants such that $C_{\theta}\rightarrow0$ as
$\theta\rightarrow+\infty$ (the constants may change from line to
line). This idea of introducing $\theta$ and $C_{\theta}$ is
suggested by~\cite{issoglio}, page 8. Similarly, we will indicate
by $C(\lambda)$ suitable constants
such that $C(\lambda)\rightarrow0$ as
$\lambda\rightarrow+\infty$.

From the previous inequality we deduce, multiplying by $ e^{-q\theta
t}$, for any $\theta>0$,
%and integrating over $[0,T]$, we have
%
%e51 #&#
%
\begin{eqnarray}
\label{serve}
&& {\mathbb E} \bigl[ e^{-q\theta t}\llvert\widetilde
{X}_{t}-\widetilde{Y}%
_{t}\rrvert^{q}
\bigr]
\nonumber
\\
&&\qquad \leq C{\mathbb E} \biggl[ \biggl\llvert( \lambda-A ) \int
_{0}^{t}%
e^{-\theta( t-s ) }e^{ ( t-s ) A}
\bigl( u ( X_{s} ) -u ( Y_{s} ) \bigr)\nonumber\\
&&\hspace*{167pt}{}\times
e^{-\theta
s}1_{ [
0,\tau] } ( s ) \,ds\biggr\rrvert^{q} \biggr]
\\
&&\qquad\quad{} +C{\mathbb E} \biggl[ \biggl( \int_{0}^{t}e^{-2\theta( t-s ) }
\bigl\llVert e^{ (
t-s ) A} \bigl( Du ( X_{s} ) -Du (
Y_{s} ) \bigr) \bigr\rrVert^{2}\nonumber\\
&&\qquad\quad\hspace*{149pt}{}\times e^{-2\theta s}1_{ [ 0,\tau]}
( s ) \,ds \biggr)^{q/2} \biggr].
\nonumber
\end{eqnarray}
Let us deal with the first term in the right-hand side. Integrating
over $ [ 0,T ] $, and assuming $\theta\geq\lambda$, we
get
\begin{eqnarray*}
&& \int_{0}^{T}C \mathbb{E} \biggl[ \biggl
\llvert( \lambda-A ) \int_{0}%
^{t}e^{-\theta( t-s ) }e^{ ( t-s ) A}
\bigl( u ( X_{s} ) -u ( Y_{s} ) \bigr)
e^{-\theta
s}1_{ [
0,\tau] } ( s ) \,ds\biggr\rrvert^{q} \biggr] \,dt
\\
&&\qquad =C \mathbb{E} \biggl[ \int_{0}^{T}\biggl
\llvert( \lambda-A ) \int_{0}%
^{t}e^{-\theta( t-s ) }e^{ ( t-s ) A}
\bigl( u ( X_{s} ) -u ( Y_{s} ) \bigr)\\
&&\qquad\quad\hspace*{150pt}{}\times
e^{-\theta
s}1_{ [
0,\tau] } ( s ) \,ds\biggr\rrvert^{q}\,dt \biggr]
\\
&&\qquad \leq I_{1}+I_{2},
\end{eqnarray*}
where
\begin{eqnarray*}
I_{1}&=&C 2^{q-1} \mathbb{E} \biggl[ \int
_{0}^{T}\biggl\llvert( \theta-A ) \int
_{0}%
^{t}e^{ ( t-s ) ( A-\theta) } \bigl( u (
X_{s} ) -u ( Y_{s} ) \bigr) e^{-\theta
s}1_{ [ 0,\tau] }
( s ) \,ds\biggr\rrvert^{q}\,dt \biggr],
\\
I_{2}&=&C \mathbb{E} \biggl[ \int_{0}^{T}
\biggl\llvert\int_{0}^{t}2\theta
e^{-\theta( t-s ) }e^{ ( t-s ) A} \bigl( u ( X_{s} ) -u (
Y_{s} ) \bigr) e^{-\theta
s}1_{ [ 0,\tau] } ( s ) \,ds\biggr
\rrvert^{q}\,dt \biggr],
\end{eqnarray*}
%
% and we have made the choice $\theta\geq\lambda$.
Let us estimate $I_1$ and $I_2$ separately. To estimate $I_1$, we
use the $L^q$-maximal inequality; see, for instance,
\cite{dap}, Section 1. This implies that, $\mathbb P$-a.s.,
\begin{eqnarray*}
&& \int_{0}^{T}\biggl\llvert( \theta-A ) \int
_{0}%
^{t}e^{ ( t-s ) ( A-\theta) } \bigl( u (
X_{s} ) -u ( Y_{s} ) \bigr) e^{-\theta
s}1_{ [ 0,\tau] }
( s ) \,ds\biggr\rrvert^{q}\,dt
\\
&&\qquad\le C_4 \int_0^T
e^{-\theta q s} \bigl\llvert u( X_{s}) -u( Y_{s}) \bigr
\rrvert^q 1_{ [ 0,\tau] } ( s ) \,ds,
\end{eqnarray*}
where it is important to remark that $C_4$ is independent on $\theta
>0$. To see this, look at~\cite{dap}, Theorem 1.6, page 74, and note
that for a fixed $\alpha\in(\pi/2, \pi)$, there exists $c=
c(\alpha)$ such that for any $\theta>0$, $\mu\in{\mathbb C}$, $\mu
\not=0$, such that
$|{\operatorname{arg}}(\mu)| < \alpha$, we have
%
%e52 #&#
%
\begin{equation}
\label{mavv} \bigl\| \bigl(\mu- (A - \theta)\bigr)^{-1} \bigr\|_{\mathcal L}
\le\frac{c(\alpha)}{|\mu|}.
\end{equation}
Continuing we get
\[
I_1 \le C(\lambda) \int_0^T
e^{-\theta q s} \llvert\widetilde X_{s} - \widetilde Y_{s}
\rrvert^q \,ds
\]
with $C(\lambda) = C_0 \| Du \|_0^{q} \to0$ as $\lambda\to+
\infty$.\vadjust{\goodbreak}

Let us deal with the term $I_{2}$. Given $t\in(0,T]$, the function
$s\mapsto\theta$ $ e^{-\theta( t-s ) }( 1-e^{-\theta t})^{-1}$ is a
probability density on $ [ 0,t ] $, and thus,
by Jensen's inequality,
\begin{eqnarray*}
I_{2}&=&C2^{q} \mathbb{E} \biggl[ \int
_{0}^{T} \bigl( 1-e^{-\theta
t}
\bigr)^{q}
\\
&&\hspace*{46pt}{}\times\biggl|\int_{0}^{t}e^{ ( t-s ) A}
\bigl( u ( X_{s} ) -u ( Y_{s} ) \bigr)
e^{-\theta s}1_{ [ 0,\tau]
} ( s ) \frac{\theta e^{-\theta( t-s )
}}{1-e^{-\theta t}}\,ds
\biggr|^{q}\,dt \biggr]
\\
& \leq&\widetilde C \mathbb{E} \biggl[ \int_{0}^{T}
\bigl( 1-e^{-\theta
t} \bigr)^{q}\int_{0}%
^{t}
\bigl\llvert u ( X_{s} ) -u ( Y_{s} ) \bigr
\rrvert^{q}e^{-q\theta s}1_{ [ 0,\tau] } ( s )
\frac{\theta
e^{-\theta( t-s ) }}{1-e^{-\theta t}}\,ds\,dt \biggr]
\\
& \leq& \widetilde C\llVert Du\rrVert_{0}^{q} \mathbb{E}
\biggl[ \int_{0}^{T} \bigl( 1-e^{-\theta t}
\bigr)^{q-1}\int_{0}^{t}\theta
e^{-\theta( t-s ) }\llvert\widetilde{X}_{s}-\widetilde{Y}_{s}
\rrvert^{q}e^{-q\theta
s}\,ds\,dt \biggr]
\\
& = & \widetilde C\llVert Du\rrVert_{0}^{q} \mathbb{E} \biggl[
\int_{0}^{T} \biggl( \int_{s}%
^{T}
\bigl( 1-e^{-\theta t} \bigr)^{q-1}\theta e^{-\theta(
t-s ) }\,dt \biggr)
\llvert\widetilde{X}_{s}-\widetilde{Y}_{s}
\rrvert^{q}e^{-q\theta s}\,ds \biggr]
\\
& \leq &\widetilde C(\lambda) \mathbb{E} \biggl[ \int_{0}^{T}
\llvert\widetilde{X}_{s}-\widetilde{Y}_{s}
\rrvert^{q}e^{-q\theta
s}\,ds \biggr],
\end{eqnarray*}
because $\int_{s}^{T} ( 1-e^{-\theta t} )^{q-1}\theta
e^{-\theta( t-s ) }\,dt\leq1$, for any $\theta\ge\lambda
$. Thus we have found
%
%e53 #&#
%
\begin{eqnarray}
\label{ritorna} && {\mathbb E} \biggl[ \biggl\llvert( \lambda-A ) \int
_{0}^{t}%
e^{-\theta( t-s ) }e^{ ( t-s ) A}
\bigl( u ( X_{s} ) -u ( Y_{s} ) \bigr)
e^{-\theta
s}1_{ [ 0,\tau] } ( s ) \,ds\biggr\rrvert^{q} \biggr]
\nonumber\\[-8pt]\\[-8pt]
&&\qquad\le C(\lambda) \mathbb{E} \biggl[ \int_{0}^{T}
\llvert\widetilde{X}_{s}-\widetilde{Y}_{s}
\rrvert^{q}e^{-q\theta
s}\,ds \biggr].
\nonumber
\end{eqnarray}
%
%the bound on $I_{2}$ is as the bound on $I_{1}$.
%To see that $C_{\theta}\rightarrow0$ as $\theta\rightarrow+\infty$,
%we use the inequality
%^{t}r^{-\alpha}e^{-\rho r}\,dr\leq\Gamma\left( 1-\alpha\right)
%-1}, \rho>0, \alpha<1,\label{iss}%
%with $\alpha=(1-\epsilon)q^{\ast}<1$ and $\rho=q^{\ast}\theta$.
Now let us estimate the second term on the right-hand side of
%the
%critical term in
(\ref{serve}). For
$t > 0$ fixed,
Lemma~\ref{da} from Appendix~\ref{aapA.2} implies that $ds \otimes
\P$-a.s. on $[0,t] \times\Omega$
\begin{eqnarray*}
&& \bigl\llVert e^{ ( t-s ) A} \bigl( Du ( X_{s} ) -Du (
Y_{s} ) \bigr) \bigr\rrVert^{2}\\
&&\qquad =\sum
_{n\geq
1}e^{-2\lambda_{n}(t-s)} \bigl|Du^{(n)}(X_{s})-Du^{(n)}(Y_{s})\bigr|^{2}
\\
&&\qquad =\sum_{k \ge1}\sum_{n\geq
1}e^{-2\lambda_{n}(t-s)}
\bigl| D_k u^{(n)}(X_{s})-D_ku^{(n)}(Y_{s})\bigr|^{2}
\\
&&\qquad = \sum_{k, n \ge1} e^{-2\lambda_{n}(t-s)} \biggl| \int
_{0}^{1} \bigl\langle D D_k
u^{(n)}\bigl(Z_{s}^{r}\bigr), X_s -
Y_s \bigr\rangle\,dr \biggr|^2
\\
&&\qquad \leq\sum_{n\geq1}e^{-2\lambda_{n}(t-s)} \biggl( \int
_{0}^{1}\bigl\| D^{2}u^{(n)}
\bigl(Z_{s}^{r}\bigr)\bigr\|^{2}\,dr \biggr)
|X_{s}-Y_{s}|^{2}
\\
&&\qquad =\int_{0}^{1} \biggl( \sum
_{n\geq1}e^{-2\lambda_{n}(t-s)}\bigl\| D^{2}%
u^{(n)}\bigl(Z_{s}^{r}\bigr)\bigr\|^{2}
\biggr) \,dr\, |X_{s}-Y_{s}|^{2},
\end{eqnarray*}
where $D_k u^{(n)} = \langle D u^{(n)}, e_k\rangle$,
$D_h D_k
u^{(n)} = \langle D^2 u^{(n)} e_h, e_k\rangle$
and $\|
D^{2}u^{(n)}(z)\|^{2} = \sum_{h, k \ge1} |D_h D_k
u^{(n)}(z)|^2$, for $\mu$-a.e. $z \in H$, and as before,
%(cf. Section 1.1). }
%
\[
Z_{t}^{r}=Z_{t}^{r,x}=rX_{t}+(1-r)Y_{t}.
\]
Integrating the second term in (\ref{serve}) in $t$ over $[0,T]$, we
thus find
\begin{eqnarray*}
\Gamma_{T}:\!&=&\int_{0}^{T}{\mathbb
E} \biggl[ \biggl( \int_{0}^{t}e^{-2\theta(
t-s) }1_{[0,\tau]}(s)e^{-2\theta s}
\\[-1pt]
&&\hspace*{49pt}{} \times\bigl\| e^{( t-s) A} \bigl( Du ( X_{s} ) -Du (
Y_{s} ) \bigr) \bigr\|^{2}\,ds
\biggr)^{q/2} \biggr] \,dt
\\[-1pt]
&\leq&\int_{0}^{T}{\mathbb E} \biggl[ \biggl(
\int_{0}^{t}e^{-2\theta( t-s)
}1_{[0,\tau]}(s)
\\[-1pt]
&&\hspace*{49pt}{} \times\int_{0}^{1} \biggl( \sum
_{n\geq1}e^{-2\lambda_{n}(t-s)}\bigl\| D^{2}u^{(n)}
\bigl(Z_{s}^{r}\bigr)\bigr\|^{2} \biggr) \,dr\\[-1pt]
&&\hspace*{135.4pt}{}\times e^{-2\theta s}|X_{s}-Y_{s} %%
|^{2}\,ds \biggr)^{q/2} \biggr] \,dt.
\end{eqnarray*}
Now we consider $\delta\in(0,1)$ such that $(-A)^{-1+\delta}$ is
of finite trace. Then
\begin{eqnarray*}
\Gamma_{T} &\le&\int_{0}^{T}{\mathbb
E} \biggl[ \biggl( \int_{0}^{t}
\frac{e^{-2\theta( t-s) }}{(t-s)^{{1 -
\delta}}}1_{[0,\tau]}(s)
\\[-1pt]
&&\hspace*{50pt}{} \times\int_{0}^{1} \biggl( \sum
_{n\geq1}\bigl( \lambda_{n}(t-s)\bigr)^{{1 - \delta}}\\[-1pt]
&&\hspace*{98pt}{}\times { e^{-2\lambda_{n}(t-s)} } \frac{ \| D^{2}u^{(n)}(Z_{s}^{r})
\|^{2}} {{\lambda_{n}^{{1 - \delta}}}} \biggr) \,dr\\[-1pt]
&&\hspace*{149.6pt}{}\times e^{-2\theta
s}|X_{s}-Y_{s}|^{2}\,ds
\biggr)^{q/2} \biggr] \,dt
\\[-1pt]
&\leq& C\int_{0}^{T}{\mathbb E} \biggl[ \biggl(
\int_{0}^{t}\frac
{e^{-2\theta(
t-s) }}{(t-s)^{{1 - \delta}}}1_{[0,\tau]}(s)
\\[-1pt]
&&\hspace*{59.7pt}{} \times\int_{0}^{1} \biggl( \sum
_{n\geq1}\frac{1}{\lambda_{n}^{{1 - \delta}}}\bigl\| D^{2}u^{(n)}
\bigl(Z_{s}^{r}\bigr)\bigr\|^{2} \biggr) \,dr\\[-1pt]
&&\hspace*{125pt}{}\times e^{-2\theta
s}|X_{s}-Y_{s}|^{2}\,ds
\biggr)^{q/2} \biggr] \,dt.
\end{eqnarray*}
Let us explain the motivation of the previous estimates: on the one
side we isolate the term $\frac{e^{-2\theta( t-s )
}}{(t-s)^{{1 - \delta}}}$ which
will produce a constant $C_{\theta}$ arbitrarily small for large
$\theta$; on the
other side, we keep the term\vadjust{\goodbreak}
$\frac{1}{\lambda_{n}^{{1 - \delta}}}$ in the series $\sum_{n\geq
1}\frac{1}%
{\lambda_{n}^{{1 - \delta}}}\|
D^{2}u^{(n)}(Z_{s}^{r})\|^{2}$; otherwise, later on (in the
next proposition), we could not evaluate high powers of this
series.

Using the (triple) H\"{o}lder inequality in the integral with
respect to $s$, with
$\frac{2}{q}+\frac{1}{\beta}+\frac{1}{\gamma}=1$, $\gamma>1$ and
$\beta>1$ such that ${(1 - \delta)}\beta<1$, and Jensen's inequality
in the integral with respect to $r$, we find
%
%e54 #&#
%
\begin{equation}
\Gamma_{T}\leq\widetilde C_{\theta}{\mathbb E} \biggl[
\Lambda_{T}\int_{0}^{T}e^{-q\theta s}%
| \widetilde X_{s}- \widetilde Y_{s}|^{q}\,ds \biggr],
\end{equation}
where
\[
\widetilde C_{\theta}= \biggl( \int_{0}^{T}
\frac{e^{-2\beta\theta
r}}{r^{{(1 - \delta)}\beta}%
}\,dr \biggr)^{q/2\beta}%
\]
(which converges to zero as $\theta\rightarrow\infty$) and
\[
\Lambda_{T}:=\int_{0}^{T} \biggl(
\int_{0}^{t}1_{[0,\tau]}(s) \int
_{0}%
^{1} \biggl( \sum
_{n\geq1}\frac{1}{\lambda_{n}^{{1 - \delta}}}\bigl\| D^{2}%
u^{(n)}\bigl(Z_{s}^{r}\bigr)\bigr\|^{2}
\biggr)^{\gamma} \,dr \,ds \biggr)^{q/2\gamma}\,dt.
\]
We may choose $\gamma=\frac{q}{2}$ so that $\frac{q}{2\gamma}=1$.
This is compatible with the other constraints, namely $q>2$,
$ \frac{2}{q}+\frac{1}{\beta}+\frac{1}{\gamma}=1$, $\beta>1$ such
that ${(1 - \delta)}\beta<1$, because we may choose $\beta>1$
arbitrarily close to 1 and then solve
$\frac{4}{q}+\frac{1}{\beta}=1$ for $q$, which would require $q>4$.
So, from now on we fix $q \in(4, \infty)$ and $\gamma= q/2$. Hence
\begin{eqnarray*}
\Lambda_{T} :\!&=&\int_{0}^{T}\int
_{0}^{t}1_{[0,\tau]}(s) \int
_{0}^{1} \biggl( \sum
_{n\geq1}\frac{1}{\lambda_{n}^{{1 - \delta}}}\bigl\| D^{2}u^{(n)}
\bigl(Z_{s}^{r}%
\bigr)\bigr\|^{2}
\biggr)^{\gamma} \,dr \,ds \,dt
\\
&\leq& T\cdot\int_{0}^{T\wedge\tau} \int
_{0}^{1} \biggl( \sum
_{n\geq1}\frac{1}{\lambda_{n}^{{1 - \delta}}}\bigl\| D^{2}u^{(n)}
\bigl(Z_{s}^{r}\bigr)\bigr\|^{2}
\biggr)^{\gamma} \,dr \,ds.
\end{eqnarray*}
%
%Recall that
%$\alpha\in\left( 0,1\right) $ is at our choice and
%$\gamma$ (large) is determined as a consequence of the other
%choices.
Define now, for any $R>0$, the stopping time
\[
\tau_{R}^{x}=\inf\biggl\{ t\in[ 0,T ]\dvtx\int
_{0}^{t} \int_{0}%
^{1}
\biggl( \sum_{n\geq1}\frac{1}{\lambda_{n}^{{1 - \delta}}}\bigl\|
D^{2}%
u^{(n)}\bigl(Z_{s}^{r}
\bigr)\bigr\|^{2} \biggr)^{\gamma} \,dr \,ds\geq R \biggr\}
\]
and $\tau_{R}^{x}=T$ if this set is empty. Take $\tau=\tau_{R}^{x}$
in the previous expressions and collect the previous estimates.
Using also (\ref{ritorna}) we get from (\ref{serve}),
%Integrating
%over $[0,T]$ in \eqref{ci}, using also \eqref{bau}
for
any $\theta\ge\lambda$,
\begin{eqnarray*}
&& \int_{0}^{T}e^{-q\theta t}{\mathbb E}|
\widetilde{X}_{t}-\widetilde{Y}_{t}|^{q}\,dt
\\
&&\qquad \leq C(\lambda) \int_{0}^{T}e^{-q\theta s}{
\mathbb E}|\widetilde{X}_{s}-\widetilde{Y}%
_{s}|^{q}\,ds\\
&&\qquad\quad{}+
\widetilde C_{\theta}R\int_{0}^{T}e^{-q\theta s}{
\mathbb E}|\widetilde{X}_{s}-\widetilde{Y}_{s}|^{q}\,ds.
\end{eqnarray*}
Now we fix $\lambda$ large enough such that $ C (\lambda) <1$
and consider $\theta$ greater of such~$\lambda$.

For sufficiently large $\theta=\theta_{R}$, \textit{depending on}
$R$,
\[
{\mathbb E} \biggl[\int_{0}^{T}e^{-q\theta_{R} t}1_{[0,\tau_{R}]}(t)
|X_{t}-Y_{t}%
|^{q}\,dt \biggr]={\mathbb
E} \biggl[\int_{0}^{\tau_{R}}e^{-q\theta_{R}
t}
|X_{t}-Y_{t}%
|^{q}\,dt \biggr]=0.
\]
In other words, for every $R>0$, $\mathbb{P}$-a.s., $X=Y$ on
$ [ 0,\tau_{R} ] $ (identically in $t$, since $X$ and
$Y$ are continuous processes). We have
$\lim_{R\rightarrow\infty}\tau_{R}=T$, $\mathbb{P}$-a.s., because
of the next proposition. Hence, $\mathbb{P}$-a.s., $X=Y$ on
$ [ 0,T ] $, and the proof is complete.
%
%pr10 #&#
%
\begin{proposition}
\label{stop} For $\mu$-a.e. $x\in H$, we have
$\mathbb{P} (S_{T}^x <\infty)=1$, where
%, for
%$t\in[0,T]$,
%
\[
S_{T}^{x}=\int_{0}^{T}
\int_{0}^{1} \biggl( \sum
_{n\geq1}\frac
{1}{\lambda_{n}^{{1 - \delta}}}\bigl\| D^{2}u^{(n)}
\bigl(Z_{s}^{r}\bigr)\bigr\|^{2}
\biggr)^{\gamma} \,dr \,ds
\]
with $\gamma= q/2$.
The result is true also for a random ${\mathcal F}_{0}$-measurable,
$H$-valued initial
condition under the assumptions stated in Theorem~\ref{maintheorem}.
\end{proposition}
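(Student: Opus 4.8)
The plan is to bound $\int_H\mathbb{E}\big[S_T^x\big]\,\mu(dx)$: once this is finite we get $\mathbb{E}[S_T^x]<\infty$, hence $S_T^x<\infty$ $\mathbb{P}$-a.s., for $\mu$-a.e. $x\in H$. Write
$$
G(z):=\Big(\sum_{n\ge1}\tfrac{1}{\lambda_n^{1-\delta}}\,\|D^2u^{(n)}(z)\|^2\Big)^{\gamma},\qquad \gamma=q/2>2 .
$$
The first step is purely analytic and does not involve the solutions at all: I claim $G\in L^{m}(H,\mu)$ for every $m\ge1$. Fix $m$ and put $p:=2\gamma m\ge2$. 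Since $p/2\ge1$, Minkowski's inequality in $L^{p/2}(\mu)$ together with estimate \eqref{cgp} of Proposition \ref{p2}, applied to the solution $u^{(n)}$ of \eqref{e21} with datum $B^{(n)}$, gives
$$
\Big\|\sum_{n\ge1}\tfrac{1}{\lambda_n^{1-\delta}}\,\|D^2u^{(n)}\|^2\Big\|_{L^{p/2}(\mu)}\le \sum_{n\ge1}\tfrac{1}{\lambda_n^{1-\delta}}\Big(\int_H\|D^2u^{(n)}\|^{p}\,d\mu\Big)^{2/p}\le C(\lambda,p,\|B\|_0)^{2/p}\,\|B\|_0^{2}\,\sum_{n\ge1}\tfrac{1}{\lambda_n^{1-\delta}},
$$
where I used $\int_H|B^{(n)}|^{p}\,d\mu\le\|B^{(n)}\|_0^{p}\le\|B\|_0^{p}$ and, crucially, that the constant in \eqref{cgp} is independent of $n$. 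The series equals $\mathrm{Tr}\big((-A)^{-1+\delta}\big)<\infty$ by Hypothesis 1, so $\|G\|_{L^{m}(\mu)}<\infty$.

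The next step is probabilistic. Although the law of $Z_s^{r}=rX_s+(1-r)Y_s$ is not explicitly known, subtracting the mild formulations of $X$ and $Y$ shows that $Z^{r}$ is a mild solution of $dZ^{r}_t=AZ^{r}_t\,dt+\tilde B_t\,dt+dW_t$, $Z^{r}_0=x$, with $\tilde B_t:=rB(X_t)+(1-r)B(Y_t)$ adapted and $\|\tilde B_t\|\le\|B\|_0$. By Girsanov's theorem (cf. Theorem \ref{tA1} in the appendix, whose proof applies to any bounded adapted drift), there is a probability measure $\mathbb{Q}\sim\mathbb{P}$ on $\mathcal F_T$ under which $(Z^{r}_t)_{t\le T}$ is an Ornstein--Uhlenbeck process started at $x$, so that $Z^{r}_s\sim N(e^{sA}x,Q_s)$ under $\mathbb{Q}$ for each $s\le T$; moreover, since the relevant quadratic variation is bounded by $\|B\|_0^2T$, a standard exponential--moment estimate gives $\mathbb{E}_{\mathbb{Q}}\big[(d\mathbb{P}/d\mathbb{Q})^{\zeta}\big]\le\exp\big(\tfrac{\zeta(\zeta-1)}{2}\|B\|_0^2T\big)$ for every $\zeta>1$. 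Fixing $\zeta>1$ with conjugate exponent $\zeta'$, Hölder's inequality yields, uniformly in $s\in[0,T]$, $r\in[0,1]$ and $x\in H$,
$$
\mathbb{E}_{\mathbb{P}}\big[G(Z^{r}_s)\big]\le C_{\zeta}\,\Big(\int_H G^{\zeta'}(z)\,N(e^{sA}x,Q_s)(dz)\Big)^{1/\zeta'},\qquad C_\zeta=C_\zeta(\|B\|_0,T).
$$

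To conclude the deterministic case I integrate this over $r\in[0,1]$, $s\in[0,T]$ and $x\in H$ against $\mu$. Since $t\mapsto t^{1/\zeta'}$ is concave and $dr\otimes\mu$ is a probability measure, Jensen's inequality pulls the exponent outside; and, by invariance of $\mu$ for the Ornstein--Uhlenbeck semigroup, $\int_H N(e^{sA}x,Q_s)(\cdot)\,\mu(dx)=\mu$, whence
$$
\int_H\mathbb{E}\big[S_T^x\big]\,\mu(dx)=\int_H\!\int_0^T\!\!\int_0^1\mathbb{E}_{\mathbb{P}}\big[G(Z^{r}_s)\big]\,dr\,ds\,\mu(dx)\le C_\zeta\,T\,\Big(\int_H G^{\zeta'}\,d\mu\Big)^{1/\zeta'}<\infty
$$
by the first step (with $m=\zeta'$). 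For a random $\mathcal F_0$-measurable $x$ with law $\mu_0\ll\mu$ and $\int_H(d\mu_0/d\mu)^{\zeta}\,d\mu<\infty$, running the same chain but keeping the $x$-variable (using $L^{\zeta'}(\mu)$ in $x$ and Jensen in $s$) shows that $\Phi(x):=\mathbb{E}[S_T^x]$, for deterministic $x$, defines a function in $L^{\zeta'}(H,\mu)$; then, conditioning on $\mathcal F_0$ (using independence of $W$ from $\mathcal F_0$) and applying Hölder's inequality, $\mathbb{E}[S_T^x]=\int_H\Phi\,d\mu_0\le\|\Phi\|_{L^{\zeta'}(\mu)}\,\|d\mu_0/d\mu\|_{L^{\zeta}(\mu)}<\infty$. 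In either case $S_T^x<\infty$ $\mathbb{P}$-a.s.

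I expect the main obstacle to be making the weights and exponents close simultaneously: $S_T^x$ carries the high power $\gamma=q/2>2$ of the series $\sum_n\lambda_n^{-(1-\delta)}\|D^2u^{(n)}\|^2$, and it is exactly the weight $\lambda_n^{-(1-\delta)}$ — combined with the $n$-uniform bound $\int_H\|D^2u^{(n)}\|^p d\mu\le C\|B^{(n)}\|_0^p\le C\|B\|_0^p$ coming from \eqref{cgp}, and Minkowski's inequality in $L^{p/2}(\mu)$ for arbitrarily large $p$ — that makes the summation converge through $\mathrm{Tr}\big((-A)^{-1+\delta}\big)<\infty$; this is the only point at which the trace-class hypothesis on $A$ enters the argument. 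The secondary difficulty, that the law of the interpolated process $Z_s^{r}$ is not available directly, is circumvented by observing that $Z^{r}$ solves an Ornstein--Uhlenbeck equation with bounded adapted drift and invoking Girsanov's theorem with uniform control of the moments of the density — which is precisely the mechanism that forces the restriction to $\mu$-a.e. (resp. $\mu$-absolutely continuous) initial conditions.
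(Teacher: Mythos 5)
Your proposal is correct in substance and follows essentially the same route as the paper's proof: (a) Girsanov's theorem turns $Z^r_\cdot=rX_\cdot+(1-r)Y_\cdot$ into the OU process $e^{\cdot A}x+W_A(\cdot)$ under an equivalent measure whose density has moments bounded uniformly in $x$ and $r$; (b) the $n$-uniform estimate \eqref{cgp} together with $\mathrm{Tr}[(-A)^{-1+\delta}]<\infty$ puts the series of second derivatives into every $L^m(\mu)$; (c) since $d\mu_s^x/d\mu$ degenerates at $s=0$, one integrates over $x$ against $\mu$ and uses invariance of $\mu$, which is exactly why the conclusion holds only for $\mu$-a.e.\ $x$. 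The two cosmetic differences are both fine and arguably cleaner: you use Minkowski's inequality in $L^{p/2}(\mu)$ to sum the series where the paper uses the H\"older splitting $\left(\sum_n c_n a_n\right)^{2\gamma}\le\left(\sum_n c_n^{2\gamma/(2\gamma-1)}\right)^{2\gamma-1}\sum_n a_n^{2\gamma}$, and you use a general H\"older exponent $\zeta$ on the Girsanov density where the paper uses Cauchy--Schwarz ($\zeta=2$). The one step that needs repair is the random-initial-condition case: the identity $\mathbb{E}[S_T^x]=\int_H\Phi\,d\mu_0$ with $\Phi(y):=\mathbb{E}[S_T^y]$ presumes that the conditional law of the weak solutions $(X,Y)$ given $x=y$ coincides with the law of solutions started at the deterministic point $y$, which is not justified (mild solutions need not be measurable functionals of $(x,W)$). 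The fix is immediate with the tools you already have: apply the H\"older/disintegration step \emph{after} Girsanov, at the level of the OU law $\mu_s=\int_H\mu_s^y\,\mu_0(dy)$ (valid because $e^{sA}x$ and $W_A(s)$ are independent), obtaining $\int_H|f|\,d\mu_s\le\|h_0\|_{L^{\zeta}(\mu)}\|f\|_{L^{\zeta'}(\mu)}$ via the contractivity of $R_s$ on $L^{\zeta'}(\mu)$ --- which is precisely the paper's Step 4.
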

\begin{pf}
We will show that, for any $x\in H$, $\mu$-a.s.,
\[
{\mathbb E}\bigl[S_{T}^{x}\bigr]<+\infty.
\]
We will also show this result for random initial conditions under
the specified assumptions.\vspace*{8pt}

\textit{Step} 1. In this step $x\in H$ is given, without restriction.
Moreover, the result is true for a general ${\mathcal F}_{0}
$-measurable initial condition $x$ without restrictions on its
law.

We have%
\[
Z_{t}^{r}=e^{tA}x+\int_{0}^{t}e^{ ( t-s ) A}
\bar{B}_{s}^{r} %%
\,ds+\int_{0}^{t}e^{ ( t-s ) A}\,dW_{s},
\]
where%
\[
\bar{B}_{s}^{r}=\bigl[rB(X_{s})+(1-r)B(Y_{s})
\bigr],\qquad r\in[0,1].
\]
Define
\[
\rho_{r}=\exp\biggl( -\int_{0}^{t}
\bar{B}_{s}^{r}\,dW_{s}-\frac{1}{2}\int
_{0}^{t}\bigl|\bar{B}_{s}^{r}\bigr|^{2}\,ds
\biggr).
\]
We have, since $|\bar{B}_{s}^{r}|\leq\| B\|_{0}$,
%
%e55 #&#
%
\begin{equation}
\label{estimateeasy} {\mathbb E} \biggl[ \exp\biggl( k\int_{0}^{T}\bigl|
\bar{B}_{s}^{r}\bigr|^{2}\,ds \biggr) \biggr] \leq
C_{k}<\infty
\end{equation}
for all $k\in\mathbb{R}$, independently of $x$ and $r$, simply
because $B$ is bounded. Hence an infinite-dimensional version of
Girsanov's theorem with respect to a cylindrical Wiener process (the
proof of which is included in the \hyperref[app]{Appendix};
see Theorem~\ref{tA1})
applies and gives us that
\[
\widetilde W_{t}:=W_{t}+\int_{0}^{t}
\bar{B}_{s}^{r}\,ds
\]
is a cylindrical Wiener process on $ ( \Omega, {\mathcal F},
( {\mathcal F}_{t} )_{t\in[ 0,T ]
},\widetilde{\mathbb{P}}_{r} ) $ where
$\frac{d\widetilde{\mathbb{P}}_{r}}{d\mathbb{P}}|_{{\mathcal F}_{T}%
}=\rho_{r}$. Hence
\[
Z_{t}^{r}=e^{tA}x+\int_{0}^{t}e^{ ( t-s ) A}\,d
\widetilde W_{s}%
\]
is the sum of a stochastic integral which is Gaussian with respect
to
$\widetilde{\mathbb{P}}_{r}$, plus the independent (because
${\mathcal F}_{0}%
$-measurable) random variable $e^{tA}x$. Its law is uniquely
determined by $A$, $r$ and the law of $x$.

Denote by $W_{A} ( t ) $ the
process%
\[
W_{A} ( t ):=\int_{0}^{t}e^{ ( t-s )
A}\,dW_{s}.
\]
We have $e^{\cdot A}x+W_{A} ( \cdot) =Z^{r}$ in law.
%Before using the Girsanov theorem, we need some computations.
We have
%
%e56 #&#
%
\begin{eqnarray}
\label{ps} {\mathbb E}\bigl[S_{T}^{x}\bigr]&=&{\mathbb E}
\biggl[ \int_{0}^{T} \int_{0}^{1}
\biggl( \sum_{n\geq1}%
\frac{1}{\lambda_{n}^{{1 - \delta}}}
\bigl\| D^{2}u^{(n)}\bigl(Z_{s}^{r}\bigr)
\bigr\|^{2} \biggr)^{\gamma} \,dr \,ds \biggr]
\nonumber\\[-8pt]\\[-8pt]
&=&\int_{0}^{T}\int_{0}^{1}{
\mathbb E} \biggl[ \biggl( \sum_{n\geq
1}
\frac{1}%
{\lambda_{n}^{{1 - \delta}}}\bigl\|
D^{2}u^{(n)}\bigl(Z_{s}^{r}\bigr)
\bigr\|^{2} \biggr)^{\gamma} \biggr] \,dr \,ds.
\nonumber
\end{eqnarray}
Applying the Girsanov theorem, we find, for $r\in[0,1]$,
\begin{eqnarray*}
&& {\mathbb E} \biggl[\rho_{r}^{-1/2}\rho_{r}^{1/2}
\biggl( \sum_{n\geq1}\frac{1}%
{
\lambda_{n}^{{1 - \delta}}}\bigl\| D^{2}u^{(n)}
\bigl(Z_{s}^{r}\bigr)\bigr\|^{2}
\biggr)^{\gamma
} \biggr]
\\
&&\qquad \leq\bigl({\mathbb E}\bigl[\rho_{r}^{-1}\bigr]
\bigr)^{1/2} \biggl({\mathbb E} \biggl[\rho_{r} \biggl( \sum
_{n\geq1}\frac{1}{\lambda_{n}^{{1 -
\delta}}}\bigl\| D^{2}u^{(n)}
\bigl(Z_{s}^{r}\bigr)\bigr\|^{2}
\biggr)^{2\gamma} \biggr] \biggr)^{1/2}
\\
&&\qquad \leq{\mathbb E}\bigl[\rho_{r}^{-1}\bigr]+{\mathbb E}
\biggl[ \biggl( \sum_{n\geq1}\frac{1}{\lambda_{n}^{{1 - \delta}}}\bigl\|
D^{2}u^{(n)}\bigl(e^{sA}x+W_{A}(s)
\bigr)\bigr\|^{2} \biggr)^{2\gamma} \biggr].
\end{eqnarray*}
By (\ref{ps}) it follows that
%
%e57 #&#
%
\begin{eqnarray}
\label{ftt} {\mathbb E}\bigl[S_{T}^{x}\bigr]&\leq& T\int
_{0}^{1}{\mathbb E}\bigl[\rho_{r}^{-1}
\bigr]\,dr\nonumber\\[-8pt]\\[-8pt]
&&{}+\int_{0}^{T}{\mathbb E} \biggl[ \biggl(
\sum_{n\geq1}\frac{1}{\lambda_{n}^{{1 - \delta}}}\bigl\|
D^{2}u^{(n)}%
\bigl(e^{sA}x+W_{A}(s)
\bigr)\bigr\|^{2} \biggr)^{2\gamma} \biggr] \,ds.\nonumber
\end{eqnarray}

\textit{Step} 2. We have ${\mathbb E} [ \rho_{r}^{-1} ]
\leq C<\infty$ independently of $x\in H$
(also in the case of an ${\mathcal F}_{0}$-measurable $x$) and $r\in
[0,1]$.
Indeed,
\begin{eqnarray*}
{\mathbb E} \bigl[ \rho_{r}^{-1} \bigr] &=& {\mathbb E}
\biggl[ \exp\biggl( \int_{0}^{t}\bar
{B}_{s}^{r}\,dW_{s}+\frac{1}{2}\int
_{0}^{t}\bigl|\bar{B}_{s}^{r}\bigr|^{2}\,ds
\biggr) \biggr]
\\
&=& {\mathbb E} \biggl[ \exp\biggl( \int_{0}^{t}
\bar{B}_{s}^{r}\,dW_{s}+ \biggl(
\frac{3}%
{2}-1 \biggr) \int_{0}^{t}\bigl|
\bar{B}_{s}^{r}\bigr|^{2}\,ds \biggr) \biggr]
\\
&\leq& {\mathbb E} \biggl[ \exp\biggl( \int_{0}^{t}2
\bar{B}_{s}^{r}\,dW_{s}-\frac{1}{2}\int_{0}^{t}\bigl|2\bar{B}_{s}^{r}\bigr|^{2}\,ds
\biggr) \biggr]^{1/2}C_{3}^{1/2}%
\end{eqnarray*}
by (\ref{estimateeasy}). But
\[
{\mathbb E} \biggl[ \exp\biggl( \int_{0}^{t}2
\bar{B}_{s}^{r}\,dW_{s}-\frac{1}{2}\int
_{0}^{t}\bigl|2\bar{B}_{s}^{r}\bigr|^{2}\,ds
\biggr) \biggr] =1,
\]
because of Girsanov's theorem. Therefore, ${\mathbb E} [
\rho_{r}^{-1} ] $ is bounded uniformly in $x$ and~$r$.\vspace*{8pt}

\textit{Step} 3. Let us come back to (\ref{ftt}). To prove that
${\mathbb E}[S_{T}%
^{x}]<+\infty$ and hence finish the proof, it is enough to verify that%
%
%e58 #&#
%
\begin{equation}
\label{df} \int_{0}^{T}{\mathbb E} \biggl[
\biggl( \sum_{n\geq1}\frac
{1}{\lambda_{n}^{{1 - \delta}}%
}\bigl\|
D^{2}u^{(n)}\bigl(e^{sA}x+W_{A}(s)
\bigr)\bigr\|^{2} \biggr)^{2\gamma} \biggr] \,ds<\infty.
\end{equation}
If $\mu_{s}^{x}$ denotes the law of $e^{sA}x+W_{A} ( s )
$, we
have to prove that%
%
%e59 #&#
%
\begin{equation}
\label{cruciale} \int_{0}^{T}\int
_{H} \biggl( \sum_{n\geq1}
\frac{1}{\lambda_{n}^{{1 -
\delta}}}\bigl\| D^{2}u^{(n)}(y)\bigr\|^{2}
\biggr)^{2\gamma}\mu_{s}^{x}(dy)\,ds<\infty.
\end{equation}
Now we check (\ref{df}) for deterministic $x \in H$. In step 4
below, we will consider the case where $x$ is an ${\mathcal
F}_0$-measurable r.v.

We estimate
\[
\biggl( \sum_{n\geq1}\frac{1}{\lambda_{n}^{{1 - \delta}}}\bigl\|
D^{2}u^{(n)}%
(y)\bigr\|^{2}
\biggr)^{2\gamma}\leq\biggl( \sum_{n\geq1}
\frac{1}{\lambda_{n}^{{(1 - \delta)} (
{2\gamma}/({2\gamma-1}) ) }} \biggr)^{2\gamma
-1}\sum_{n\geq1}
\bigl\| D^{2}u^{(n)}(y)\bigr\|^{4\gamma}.
\]
Since $\frac{2\gamma}{2\gamma-1}>1$ we
have $\sum_{n\geq1}\frac{1}{\lambda_{n}^{{(1 - \delta)} (
{2\gamma}%
/({2\gamma-1}) ) }}<\infty$. Hence we have to prove that%
%
%e60 #&#
%
\begin{equation}
\label{frr1} \int_{0}^{T}\int
_{H} \sum_{n\geq1}\bigl\|
D^{2}u^{(n)}(y)\bigr\|^{4\gamma}\mu_{s}^{x}(dy)\,ds<
\infty.
\end{equation}
Unfortunately, we cannot verify (\ref{frr1}) for an individual
deterministic $x \in H$. On the other hand, by (\ref{cgp}) we know
that, for any $\eta\geq2$,
\[
\int_{H}\bigl\llVert D^{2}u^{(n)} ( z
) \bigr\rrVert^{\eta}\mu( dz ) \leq C_{\eta}\int
_{H}\bigl|B^{(n)}(x)\bigr|^{\eta} \mu(dx),
\]
where $C_{\eta}$ is independent of $n$. Hence we obtain
\begin{eqnarray*}
\int_{H} \sum_{n\geq1}
\bigl\| D^{2}u^{(n)}(y)\bigr\|^{4\gamma}\mu(dy) &\leq&
C_{4\gamma}\int_{H}\sum_{n\geq1}\bigl|B^{(n)}(y)\bigr|^{4\gamma}
\mu(dy)
\\
&\leq& C_{4\gamma} \| B\|_{0}^{4\gamma-2}\int
_{H}\bigl|B(x)\bigr|^{2}%
\mu(dx)\\
&\leq&
C_{4\gamma}\| B\|_{0}^{4\gamma}.
\end{eqnarray*}
%
%where $C_p$ depends also on $\| B\|_0$.
This estimate is clearly related to (\ref{frr1}) since the law
$\mu_{s}^{x}$ is equivalent to $\mu$ for every $s>0$ and $x$. The
problem is that $\frac{d\mu_{s}^{x}}{d\mu}$ degenerates too
strongly at $s=0$. Therefore we use the fact that
\[
\int_{H} \biggl( \int_{H}f ( z )
\mu_{s}^{x} ( dz ) \biggr) \mu( dx ) =\int
_{H}f ( z ) \mu( dz )
\]
for all $s\geq0$, for every nonnegative measurable function $f$.
Thus, for
any $s\geq0$ with $f ( y ) =\sum_{n\geq1}\|
D^{2}u^{(n)}%
(y)\|^{4\gamma}$, we get
\begin{eqnarray*}
&& \int_{H} \biggl( \int_{H}\sum
_{n\geq1}\bigl\| D^{2}u^{(n)}(y)
\bigr\|^{4\gamma
} \mu_{s}^{x} ( dy ) \biggr) \mu( dx )
\\
&&\qquad =\int_{H}\sum_{n\geq1}\bigl\|
D^{2}u^{(n)}(y)\bigr\|^{4\gamma}\mu( dy ) \\
&&\qquad\leq
C_{4\gamma} \| B\|_{0}^{4\gamma}<\infty.
\end{eqnarray*}

\textit{Step} 4. We prove (\ref{df}) in the case of a random initial
condition $x$ ${\mathcal F}_0$-measurable with law $\mu_{0}$ such
that $\mu_{0}\ll\mu$ and $\int_{H}h_{0}^{\zeta}
\,d\mu<\infty$ for some $\zeta>1$, where $h_{0}:=\frac{d\mu_{0}}%
{d\mu}$.

Denote by $\mu_{s}$ the law of $e^{sA}x+W_{A} ( s )
$, $s\ge0$. We have to prove that
%
%e61 #&#
%
\begin{equation}
\label{fgt} \int_{0}^{T}\int
_{H} \biggl( \sum_{n\geq1}
\frac{1}{\lambda_{n}^{{1
- \delta}}}\bigl\| D^{2}u^{(n)}(y)\bigr\|^{2}
\biggr)^{2\gamma}\mu_{s}(dy)\,ds <\infty.
% \mbox{i.e.,}
\end{equation}
But, since $e^{sA}x$ and $W_A(s)$ are independent, it follows that
\[
\mu_{s}(dz) =\int_{H} \mu_{s}^{y}(dz)
\mu_0 (dy).
\]
Hence, for every Borel measurable $f\dvtx H\rightarrow\mathbb{R}$, if
$\frac{1}{\zeta}+\frac{1}{\zeta^{\prime}}=1$, with $\zeta>1$, we
have%
%
%e62 #&#
%
\begin{equation}
\label{rit} \int_{H} \bigl|f( y )\bigr| \mu_{s}(dy)\leq
\llVert h_{0}\rrVert_{L^{\zeta} ( \mu) }\llVert f\rrVert_{L^{\zeta
^{\prime}%
} ( \mu) }.
\end{equation}
By (\ref{rit}), we have (similar to step 3)
\begin{eqnarray*}
a_T :\!&=& \int_{0}^{T}\int
_{H} \biggl( \sum_{n\geq1}
\frac{1}{\lambda_{n}^{{1
- \delta}}}\bigl\| D^{2}u^{(n)}(y)\bigr\|^{2}
\biggr)^{2\gamma}\mu_{s}(dy)\,ds
\\
&\leq& T\llVert h_{0}\rrVert_{L^{\zeta} (
\mu) } \biggl(\int
_{H} \biggl( \sum_{n\geq1}
\frac{1}{\lambda_{n}^{{1
- \delta}}}\bigl\| D^{2}u^{(n)}(y)\bigr\|^{2}
\biggr)^{2\gamma\zeta'}\mu(dy)
\biggr)^{1/\zeta'}.
\end{eqnarray*}
By
\begin{eqnarray*}
&& \biggl( \sum_{n\geq1}\frac{1}{\lambda_{n}^{{1 - \delta}}}\bigl\|
D^{2}u^{(n)}%
(y)\bigr\|^{2}
\biggr)^{2\gamma\zeta'}
\\
&&\qquad\leq\biggl( \sum_{n\geq1}\frac{1}{\lambda_{n}^{{(1 - \delta)} (
{2\gamma\zeta'}/({2\gamma\zeta'-1}) ) }}
\biggr)^{2\gamma
\zeta'
-1}\sum_{n\geq1}\bigl\|
D^{2}u^{(n)}(y)\bigr\|^{4\gamma\zeta'} \\
&&\qquad\le C \sum
_{n\geq1}\bigl\| D^{2}u^{(n)}(y)
\bigr\|^{4\gamma\zeta'},
\end{eqnarray*}
we obtain
\[
a_T \leq C T\llVert h_{0}\rrVert_{L^{\zeta} (
\mu) }
\biggl(\int_{H} \sum_{n\geq1}
\bigl\| D^{2}u^{(n)}(y)\bigr\|^{4\gamma\zeta'}
\mu(dy) \biggr)^{1/\zeta'},
\]
which is finite since
\begin{eqnarray*}
&& \int_{H} \sum_{n\geq1} \bigl\|
D^{2}u^{(n)}(y)\bigr\|^{4\gamma
\zeta'}\mu(dy)
\\
&&\qquad \leq C_{4\gamma\zeta^{\prime}} \| B\|_{0}^{4\gamma\zeta^{\prime
}-2}\int
_{H}\bigl|B(x)\bigr|^{2} \mu(dx)\\
&&\qquad\leq C_{4\gamma\zeta^{\prime}}\|
B\|_{0}^{4\gamma\zeta^{\prime}}.
\end{eqnarray*}
The proof is complete.
\end{pf}
%
%re11 #&#
%
\begin{remark} \label{finite}
Let us comment on the crucial assertion (\ref{cruciale}), that is,
\[
\int_{0}^{T}\int_{H}
\biggl( \sum_{n\geq1}\frac{1}{\lambda_{n}^{{1 -
\delta}}}\bigl\|
D^{2}u^{(n)}(y)\bigr\|^{2} \biggr)^{2\gamma}
\mu_{s}^{x}(dy)\,ds<\infty.
\]
This holds in particular if for some $p >1$
(large enough), we have
%
%e63 #&#
%
\begin{equation}
\label{gir} \int_0^T R_s |f|
(x) \,ds \le C_{x,T,p} \| f\|_{L^p(\mu)}
\end{equation}
for any $f \in L^p{(\mu)}$ [here $R_t$ is the OU Markov semigroup;
see (\ref{e1})]. \textit{Note that if this assertion holds for any
$x \in H$, then we have pathwise uniqueness for all initial
conditions $x \in H$.} But so far, we could not prove or disprove
(\ref{gir}). We expect, however, that (\ref{gir}) in infinite
dimensions is not true for all $x \in H$.

When $H = \R^d$ one can show that if $p > c(d)$, then (\ref{gir})
holds for any $x \in H$, and so we have uniqueness for all initial
conditions. Therefore, in finite dimension, our method could also
provide an alternative approach to the
Veretennikov result.
In this respect, note that in
finite dimension the SDE $dX_t = b(X_t)\,dt + dW_t$ is equivalent to
$dX_t = - X_t \,dt +
(b(X_t)+ X_t)
\,dt + dW_t$ which is in the form (\ref{SPDE})
with $A = -I$,
but with linearly growing drift term $B(x) = b(x) + x$. Strictly
speaking, we can only recover Veretennikov's result if we realize
the generalization mentioned in Remark~\ref{cf}(i) below.
In this alternative approach,
basically the elliptic $L^p$-estimates with
respect to the Lebesgue measure used in~\cite{Ver} are replaced by
elliptic $L^p{(\mu)}$-estimates using the Girsanov theorem.

Let us check (\ref{gir}) when $H = \R^d$
and $x=0$ for simplicity. By~\cite{DZ1}, Lem\-ma~10.3.3, we know that
\[
R_t f(x) = \int_H f(y) k_t(x,y)
\mu(dy)
\]
and moreover, according to~\cite{DZ1}, Lemma 10.3.8,
for ${p'} \ge1$,
\begin{eqnarray*}
&&\biggl( \int_H \bigl(k_t(0,y)
\bigr)^{p'} \mu(dy) \biggr)^{1/{p'}} \\
&&\qquad= \operatorname{det} \bigl(I -
e^{2tA}\bigr)^{-{1/2} + {1}/({2{p'}})} \operatorname{det} \bigl(I + \bigl({p'}-1
\bigr) e^{2tA}\bigr)^{- {1}/({2{p'}})}.
\end{eqnarray*}
By the H\"older inequality (with $1/{p'} + 1/p =1)$,
\[
\int_0^t R_r f(0) \,dr \le
\biggl( \int_H f(y)^{p} \mu(dy)
\biggr)^{1/p} \int_0^t \biggl( \int
_H \bigl(k_r(0,y)\bigr)^{p'}
\mu(dy) \biggr)^{1/{p'}} \,dr.
\]
Thus (\ref{gir}) holds with $x=0$ if for some ${p'} > 1$ near 1,
%
%e64 #&#
%
\begin{eqnarray}
\label{ma} && \int_0^t \biggl( \int
_H \bigl(k_r(0,y)\bigr)^{p'}
\mu(dy) \biggr)^{1/{p'}} \,dr
\nonumber\\
&&\qquad= \int_0^t \bigl[\operatorname{det} \bigl(I -
e^{2rA}\bigr)\bigr]^{-{1/2} + {1}/({2{p'}})} \bigl[\operatorname{det} \bigl(I +
\bigl({p'}-1\bigr) e^{2rA}\bigr)\bigr]^{-{1}/({2{p'}})} \,dr
\\
&&\qquad< +\infty.
\nonumber
\end{eqnarray}
It is easy to see that in $\R^d$ there exists $1<K(d) <2$
such that for $1< p' < K(d)$, assertion (\ref{ma}) holds.
\end{remark}
%
%re12 #&#
%
\begin{remark} \label{cf}
(i) We expect to prove more
generally uniqueness for $B\dvtx H \to H$ which is at most of linear
growth (in particular, bounded on each balls) by using a stopping
time argument.
%To do this we can follow the main proof introducing also the first
%exit time of the two solutions from a ball. Since in such ball the
%drift $B$ can replaced by a globally bounded drift $\widetilde B$ we
%can continue with our method and prove uniqueness. }

(ii) We also expect to implement
the uniqueness result
to drifts $B$ which are also time dependent.\vadjust{\goodbreak}
However, to extend our method we need parabolic $L^p_{\mu}$-estimate
involving the Ornstein--Uhlenbeck operator which are not yet available
in the literature.
\end{remark}
%
%completed or removed from the paper.} {\em We can prove more
%generally uniqueness for $B: H \to H$ which is at most of linear
%growth (in particular, bounded on each balls).
%To do this we can follow the main proof introducing also the first
%exit time of the two solutions from a ball. Since in such ball the
%drift $B$ can replaced by a globally bounded drift $\widetilde B$ we
%can continue with our method and prove uniqueness. }

%s4 #&#
\section{Examples}\label{sectionexamples}\label{sec4}

We discuss some examples in several steps. First we show a simple
one-dimensional example of wild nonuniqueness due to noncontinuity of the
drift. Then we show two infinite dimensional, very natural
generalizations of
this example. However, both of them do not fit perfectly with our
purposes, so
they are presented mainly to discuss possible phenomena. Finally, in
Section~\ref{subsectionexample4}, we modify the previous examples
in such
a way to get a very large family of \textit{deterministic problems with
nonuniqueness for all initial conditions}, which fits the assumptions
of our
result of uniqueness by noise.\looseness=-1

%s4.1 #&#
\subsection{An example in dimension one}
\label{subsectionexample1}\label{sec4.1}

In dimension 1, one of the simplest and more dramatic examples of
nonuniqueness is the equation%
\[
\frac{d}{dt}X_{t}=b_{\mathrm{Dir}} ( X_{t} ),\qquad
X_{0}=x,
\]
where%
\[
b_{\mathrm{Dir}} ( x ) = \cases{ %
1, &\quad if $x\in\mathbb{R}\setminus
\mathbb{Q}$,
\cr
0, &\quad if $x\in\mathbb{Q}$}
\]
(the so-called Dirichlet function). Let us call solution any continuous
function $X_{t}$ such that
\[
X_{t}=x+\int_{0}^{t}b_{\mathrm{Dir}}
( X_{s} ) \,ds
\]
for all $t\geq0$. For every $x$, the function
\[
X_{t}=x+t
\]
is a solution: indeed, $X_{s}\in\mathbb{R}\setminus\mathbb{Q}$ for
a.e. $s$,
hence $b_{\mathrm{Dir}} ( X_{s} ) =1$ for a.e. $s$, hence $\int_{0}%
^{t}b_{\mathrm{Dir}} ( X_{s} ) \,ds=t$ for all $t\geq0$. But from
$x\in\mathbb{Q}$ we have also the solution
\[
\widetilde{X}_{t}=x,
\]
because $\widetilde{X}_{s}\in\mathbb{Q}$ for all $s\geq0$ and thus
$b_{\mathrm{Dir}} ( X_{s} ) =0$ for all $s\geq0$. Therefore, we have
nonuniqueness from every initial condition $x\in\mathbb{Q}$. Not
only: for every
$x$ and every $\varepsilon>0$, there are infinitely many solutions on
$ [
0,\varepsilon] $. Indeed, one can start with the solution $X_{t}=x+t$
and branch at any $t_{0}\in[ 0,\varepsilon] $ such that
$x+t_{0}\in\mathbb{Q}$, continuing with the constant solution.
Therefore, in a
sense, there is nonuniqueness from every initial condition.

%s4.2 #&#
\subsection{First infinite-dimensional generalization (not of
parabolic type)}\label{sec4.2}

This example can be immediately generalized to infinite dimensions by taking
$H=l^{2}$ (the space of square summable sequences),
\[
B_{\mathrm{Dir}} ( x ) =\sum_{n=1}^{\infty}
\alpha_{n}b_{\mathrm{Dir}} ( x_{n} ) e_{n},\vadjust{\goodbreak}
\]
where $x= ( x_{n} ) $, $ ( e_{n} ) $ is the canonical
basis of $H$, and $\alpha_{n}$ are positive real numbers such that
$\sum_{n=1}^{\infty}\alpha_{n}^{2}<\infty$. The mapping $B$ is well
defined from
$H$ to $H$, it is Borel measurable and bounded, but of course not continuous.
Given an initial condition $x= ( x_{n} ) \in H$, if a function
$X ( t ) = ( X_{n} ( t ) ) $ has all
components $X_{n} ( t ) $ which satisfy%
\[
X_{n} ( t ) =x_{n}+\int_{0}^{t}
\alpha_{n}b_{\mathrm{Dir}} \bigl( X_{n} ( s ) \bigr) \,ds,
\]
then $X ( t ) \in H$ and is continuous in $H$ (we see this
from the
previous identity), and satisfies
\[
X ( t ) =\sum_{n=1}^{\infty}X_{n}
( t ) e_{n}=x+\int_{0}^{t}B_{\mathrm{Dir}}
\bigl( X ( s ) \bigr) \,ds.
\]
So we see that this equation has infinitely many solutions from every initial
condition.

Unfortunately our theory of regularization by noise cannot treat this simple
example of nonuniqueness, because we need a regularizing operator $A$
in the
equation to compensate for the regularity troubles introduced by a
cylindrical noise.

%s4.3 #&#
\subsection{Second infinite-dimensional generalization
(nonuniqueness only for
a few initial conditions)}\label{sec4.3}

Let us start in the most obvious way. Namely, consider the equation in
$H=l^{2}$
\[
X ( t ) =e^{tA}x+\int_{0}^{t}e^{ ( t-s ) A}%
B_{\mathrm{Dir}} \bigl( X ( s ) \bigr) \,ds,
\]
where
\[
Ax=-\sum_{n=1}^{\infty}\lambda_{n}
\langle x,e_{n} \rangle e_{n}%
\]
with $\lambda_{n}>0$, $\sum_{n=1}^{\infty}\frac{1}{\lambda
_{n}^{1-\varepsilon
_{0}}}<\infty$. Componentwise we have%
\[
X_{n} ( t ) =e^{-t\lambda_{n}}x_{n}+\int
_{0}^{t}e^{- (
t-s ) \lambda_{n}}b_{\mathrm{Dir}} \bigl(
X_{n} ( s ) \bigr) \,ds.
\]
For $x= ( x_{n} ) \in H$ with \textit{all} nonzero components
$x_{n}$, the solution is unique, with components
\[
X_{n} ( t ) =e^{-t\lambda_{n}}x_{n}+
\frac{1-e^{-t\lambda
_{n}}%
}{\lambda_{n}}%
\]
[we have $X_{n} ( s ) \in\mathbb{R}\setminus\mathbb{Q}$
for a.e.
$s$, hence $b_{\mathrm{Dir}} ( X_{n} ( s ) ) =1$; and it is
impossible to keep a solution constant on a rational value, due to the term
$e^{-t\lambda_{n}}x_{n}$ which always appears]. This is also a
solution for all
$x$.

But from any initial condition $x= ( x_{n} ) \in H$ such
that at
least one component $x_{n_{0}}$ is zero,\vadjust{\goodbreak} we have at least two
solutions: the
previous one and any solution such that
\[
X_{n_{0}} ( t ) =0.
\]

This example fits our theory in the sense that all assumptions are satisfied,
so our main theorem of ``uniqueness by
noise''
applies. However, our theorem states only that uniqueness is restored for
$\mu$-a.e. $x$, where $\mu$ is the invariant Gaussian measure of the linear
stochastic problem, supported on the whole $H$. We already know that this
deterministic problem has uniqueness for $\mu$-a.e. $x$: it has a unique
solution for all $x$ with all components different from zero. Therefore our
theorem is not empty but not competitive with the deterministic theory, for
this example.

%s4.4 #&#
\subsection{Infinite-dimensional examples with wild
nonuniqueness}\label{subsectionexample4}\label{sec4.4}

Let $H$ be a separable Hilbert space with a complete orthonormal system
$ ( e_{n} ) $. Let $A$ be as in the assumptions of this paper.
Assume\vspace*{1pt} that $e_{1}$ is eigenvector of $A$ with eigenvalue $-\lambda
_{1}$. Let
$\widetilde{H}$ be the orthogonal to $e_{1}$ in $H$, the span of
$e_{2},e_{3},\ldots,$ and let $\widetilde{B}\dvtx\widetilde
{H}\rightarrow
\widetilde{H}$ be
Borel measurable and bounded. Consider $\widetilde{B}$ as an operator
in $H$,
by setting $\widetilde{B} ( x ) = \widetilde{B} (
\sum_{n=2}^{\infty}x_{n}e_{n} ) $.

Let $B$ be defined as%
\[
B ( x ) = \bigl( ( \lambda_{1}x_{1} ) \wedge
1+b_{\mathrm{Dir}} ( x_{1} ) \bigr) e_{1}+\widetilde{B} (
x )
\]
for all $x= ( x_{n} ) \in H$. Then $B\dvtx H\rightarrow H$ is Borel
measurable and bounded. The deterministic equation for the first component
$X_{1} ( t ) $ is, in differential form,%
\begin{eqnarray*}
\frac{d}{dt}X_{1} ( t ) & = & -\lambda_{1}X_{1}
( t ) +\lambda_{1}X_{1} ( t ) +b_{\mathrm{Dir}} \bigl(
X_{1} ( t ) \bigr)
\\
& = &b_{\mathrm{Dir}} \bigl( X_{1} ( t ) \bigr)
\end{eqnarray*}
as soon as
\[
\bigl\llvert X_{1} ( t ) \bigr\rrvert\leq1/\lambda_{1}.
\]
In other words, the full drift $Ax+B ( x ) $ is given, on
$\widetilde{H}$, by a completely general scheme coherent with our assumption
(which may have deterministic uniqueness or not); and along $e_{1}$ it
is the
Dirichlet example of Section~\ref{subsectionexample1}, at least
as soon
as a solution satisfies $\llvert X_{1} ( t ) \rrvert
\leq1/\lambda_{1}$.

Start from an initial condition $x$ such that
\[
\llvert x_{1}\rrvert<1/\lambda_{1}.
\]
Then, by continuity of trajectories and the fact that any possible
solution to
the equation satisfies the inequality
\[
\biggl\llvert\frac{d}{dt}X_{1} ( t ) \biggr\rrvert\leq
\lambda_{1}\bigl\llvert X_{1} ( t ) \bigr\rrvert+2,
\]
there exists $\tau>0$ such that for every possible solution, we have
\[
\bigl\llvert X_{1} ( t ) \bigr\rrvert<1/\lambda_{1}
\qquad\mbox{for all }t\in[ 0,\tau].\vadjust{\goodbreak}
\]
So, on $ [ 0,\tau] $, all solutions solve $\frac{d}{dt}%
X_{1} ( t ) =b_{\mathrm{Dir}} ( X_{1} ( t ) ) $ which has infinitely many
solutions (step 1). Therefore also the infinite-dimensional equation
has infinitely many solutions.

We have proved that nonuniqueness holds for all $x\in H$ such that $x_{1}$
satisfies $\llvert x_{1}\rrvert<1/\lambda_{1}$. This set of initial
conditions has positive $\mu$-measure; hence we have a class of
examples of
deterministic equations where nonuniqueness holds for a set of initial
conditions with positive $\mu$-measure. Our theorem applies and states for
$\mu$-a.e. such initial condition we have uniqueness by noise.

\begin{appendix}\label{app}

%s5 #&#
\section*{Appendix}

%s5.1 #&#
\subsection{Girsanov's theorem in infinite dimensions with respect
to a cylindrical Wiener process}\label{aapA.1}

%the initial condition $x$ can be random. One should check that all the
%assertions of this section are still OK. }

In the main body of the paper, the Girsanov theorem for SDEs on
Hilbert spaces of type (\ref{SPDE}) with cylindrical Wiener noise is
absolutely crucial. Since a complete and reasonably self-contained
proof is hard to find in the literature, for the convenience of the
reader, we give a detailed proof of this folklore result (see,
e.g.,~\cite{DZ,Ferr10,GG} and~\cite{Fe}) in our situation,
but \textit{even for at most linearly growing $B$.}
The proof is reduced
to the Girsanov theorem of general real valued continuous local
martingales; see~\cite{RY}, (1.7) Theorem, page 329.

We consider the situation of the main body of the paper, that is, we
are given a negative definite self-adjoint operator $A\dvtx D(A)
\subset H \to H$ on a separable Hilbert space $(H, \langle\cdot,
\cdot\rangle)$ with $(-A)^{-1 + \delta}$ being of trace class, for
some $\delta\in(0,1)$, a measurable map $B\dvtx H \to H$ of at most
linear growth and $W$ a cylindrical Wiener process over $H$ defined
on a filtered probability space
$(\Omega, {\mathcal F}, {\mathcal F}_t, \P)$ represented in terms of the
eigenbasis $\{ e_k\}_{k \in\mathbb{N}}$ of $(A, D(A))$ through
a sequence
%
%e65 #&#
%
\begin{equation}
\label{A0} W(t) = \bigl(\beta_k (t) e_k
\bigr)_{k\in\N},\qquad t\ge0,
\end{equation}
where $\beta_k$, $k \in\N$, are independent real valued Brownian
motions starting at zero on $(\Omega, {\mathcal F}, {\mathcal F}_t,
\P)$. Consider the stochastic equations
%
%e66 #&#
%
\begin{equation}
\label{A1} dX(t) = \bigl(AX(t) + B\bigl(X(t)\bigr)\bigr)\,dt + dW(t),\qquad t
\in[0,T],
X(0)=x,
\end{equation}
and
%
%e67 #&#
%
\begin{equation}
\label{A2} dZ(t) = AZ(t)\,dt + dW(t),\qquad t \in[0,T], Z(0)=x,
\end{equation}
for some $T>0$.
%
%th13 #&#
%
\begin{theorem} \label{tA1}
Let $x \in H$. Then (\ref{A1}) has a unique weak mild solution, and
its law $\P_x$ on $C([0,T]; H)$ is equivalent to the law $ \Q_x$ of
the solution to (\ref{A2}) (which is just the classical OU process).
If $B$ is bounded, $x$ may be replaced by an ${\mathcal
F}_0$-measurable $H$-valued random variable.\vadjust{\goodbreak}
\end{theorem}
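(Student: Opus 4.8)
The plan is to construct the weak mild solution of \eqref{A1} out of the Ornstein--Uhlenbeck process of \eqref{A2} by a Girsanov change of measure, and then to read off both the equivalence $\P_x\sim\Q_x$ and uniqueness in law from the fact that the Radon--Nikodym density is $\P$-a.s.\ strictly positive and finite. On the given space $(\Omega,{\mathcal F},{\mathcal F}_t,\P)$ let $Z=Z(\cdot,x)$ solve \eqref{A2}; by Hypothesis~1 it has a continuous $H$-valued version, cf.\ \eqref{ou11}. Since $|B(Z_s)|^2\le C(1+|Z_s|^2)$ and $Z$ has continuous paths, the scalar process $M_t:=\int_0^t\langle B(Z_s),dW_s\rangle$ is a well-defined continuous local martingale with $\langle M\rangle_t=\int_0^t|B(Z_s)|^2\,ds<\infty$ a.s., and $\rho_t:={\mathcal E}(M)_t$ is a nonnegative continuous local supermartingale.

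The crux is to show that $\rho$ is a genuine martingale, i.e.\ $\E[\rho_T]=1$. If $B$ is bounded this is immediate from Novikov's criterion, since $\E\exp(k\langle M\rangle_T)<\infty$ for all $k\in\R$, as in \eqref{estimate easy}. For $B$ of at most linear growth Novikov is not directly applicable, and one exploits the Gaussian structure instead: writing $Z_s=e^{sA}x+W_A(s)$, the stochastic convolution $W_A$ is Gaussian with continuous paths, so by Fernique's theorem $\E\exp(\varepsilon\sup_{s\le T}|W_A(s)|^2)<\infty$ for some $\varepsilon>0$, whence $\E\exp(k\langle M\rangle_T)<\infty$ for all sufficiently small $k>0$. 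One then localizes by $\tau_n:=\inf\{t:\langle M\rangle_t\ge n\}$, for which ${\mathcal E}(M^{\tau_n})$ is a true martingale (bounded quadratic variation), and passes to the limit $n\to\infty$, using that $\tau_n\uparrow T$ a.s.\ and that the exponential moment above makes $(\rho_{t\wedge\tau_n})_n$ bounded in $L^p$ for some $p>1$, hence uniformly integrable. I expect this verification of the martingale property, in the linearly growing case, to be the main obstacle.

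Granting $\E[\rho_T]=1$, define $\widetilde\P$ on ${\mathcal F}_T$ by $d\widetilde\P=\rho_T\,d\P$. The Girsanov theorem for real-valued continuous local martingales \cite[(1.7) Theorem, page 329]{RY}, applied coordinatewise to the one-dimensional Brownian motions $\beta_k$ of \eqref{A0} (the quadratic covariations being unchanged, so that L\'evy's characterization applies), shows that $\widetilde W_t:=W_t-\int_0^tB(Z_s)\,ds$ is a cylindrical Wiener process on $(\Omega,{\mathcal F},{\mathcal F}_t,\widetilde\P)$. Since $dZ_t=AZ_t\,dt+dW_t=(AZ_t+B(Z_t))\,dt+d\widetilde W_t$, a stochastic Fubini argument (legitimate because the linear part is the analytic OU semigroup) yields that $Z$ is a mild solution of \eqref{A1} with respect to $\widetilde W$; thus $\P_x:=\widetilde\P\circ Z^{-1}$ is the law of a weak mild solution. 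As $0<\rho_T<\infty$ $\P$-a.s.\ and $\E[\rho_T]=1$, $\widetilde\P$ and $\P$ are equivalent on ${\mathcal F}_T$, and equivalence of measures is preserved under the pushforward by the measurable map $\omega\mapsto Z(\cdot,\omega)\in C([0,T];H)$; hence $\P_x\sim\P\circ Z^{-1}=\Q_x$.

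For uniqueness in law, let $(X,W)$ be any weak mild solution of \eqref{A1}. A Gronwall estimate in the mild formulation, using $\|e^{tA}\|_{\mathcal L}\le1$ and the linear growth of $B$, bounds $\sup_{s\le T}|X_s|$ by an affine function of $\sup_{s\le T}|W_A(s)|$, which has a Gaussian tail; hence the inverse density $\widehat\rho_T:=\exp\big(-\int_0^T\langle B(X_s),dW_s\rangle-\tfrac12\int_0^T|B(X_s)|^2\,ds\big)$ is a genuine martingale by the argument of the second paragraph. Under $\widehat\P$ with $d\widehat\P=\widehat\rho_T\,d\P$ the process $\widehat W_t:=W_t+\int_0^tB(X_s)\,ds$ is a cylindrical Wiener process and $X$ solves \eqref{A2}; since the OU law $\Q_x$ is unique, $\widehat\P\circ X^{-1}=\Q_x$, and inverting the change of measure (the conditional expectation of $\widehat\rho_T^{-1}$ that appears being $\widehat\P$-a.s.\ strictly positive and finite) forces the law of $X$ under $\P$ to equal $\P_x$. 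Finally, when $B$ is bounded the bound \eqref{estimate easy} is uniform in $x$, so the entire argument goes through for an ${\mathcal F}_0$-measurable $H$-valued $x$ by disintegrating with respect to its law, which gives the last assertion of Theorem~\ref{tA1}.
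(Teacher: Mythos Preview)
Your overall architecture matches the paper's: construct a weak mild solution of \eqref{A1} from the OU process via a Girsanov change of measure, then run the transformation in reverse for uniqueness; the coordinatewise Girsanov/L\'evy argument for the cylindrical noise is also the same.

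The genuine gap is in your verification of $\E[\rho_T]=1$ when $B$ is only of linear growth. From Fernique you correctly extract $\E\exp(k\langle M\rangle_T)<\infty$ for some \emph{small} $k>0$, but you then claim this makes $(\rho_{T\wedge\tau_n})_n$ bounded in $L^p$ for some $p>1$. That inference fails. Any H\"older-type bound of the form
\[
\E[\rho_s^p]\le\big(\E[\mathcal{E}(aM)_s]\big)^{p/a}\Big(\E\exp\Big(\tfrac{pa(a-1)}{2(a-p)}\langle M\rangle_s\Big)\Big)^{(a-p)/a},\qquad a>p,
\]
requires an exponential moment of $\langle M\rangle_T$ with exponent $\tfrac{pa(a-1)}{2(a-p)}$, whose infimum over $a>p$ tends to $\tfrac12$ as $p\downarrow1$ and exceeds $\tfrac12$ for every $p>1$; so you would need essentially Novikov's condition on $[0,T]$, which Fernique does not provide once the linear-growth constant of $B$ is large. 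The paper's fix (Proposition~\ref{pA7}) is to partition $[0,T]$ into subintervals $[t_{i-1},t_i]$ short enough that $\E\exp\big(\tfrac12\int_{t_{i-1}}^{t_i}|B(Z(s,x))|^2ds\big)<\infty$ --- this \emph{is} available from Fernique because the interval length multiplies the exponent --- apply Novikov on each piece, and telescope $\E[\mathcal{E}_N(t_N)\cdots\mathcal{E}_1(t_1)]=1$. Replace your localization paragraph by this subdivision argument.

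A secondary point on uniqueness: saying the conditional density is ``strictly positive and finite'' yields only equivalence of laws, not that they coincide. You need, as in Proposition~\ref{pA10}, that $\widehat\rho_T^{-1}$ is a measurable functional of the path $X(\cdot)$ alone: rewriting the stochastic integral with respect to $\widehat W$ and using $\widehat W_k(t)=\langle e_k,X_t-e^{tA}x\rangle+\lambda_k\int_0^t\langle e_k,X_s-e^{sA}x\rangle\,ds$ gives $\widehat\rho_T^{-1}=\rho_x(X)$ for a fixed Borel $\rho_x$ on $C([0,T];H)$, whence $\P\circ X^{-1}=\rho_x\cdot\Q_x$ independently of which weak solution one started from.
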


The rest of this section is devoted to the proof of this theorem.
We first need some preparation and start with recalling that
because Tr$[(-A)^{-1 + \delta}] < \infty$, $\delta\in(0,1)$, the
stochastic convolution
%
%e68 #&#
%
\begin{equation}
\label{A3} W_A(t):= \int_0^t
e^{(t-s)A} \,dW(s),\qquad t \ge0,
\end{equation}
is a well defined ${\mathcal F}_t$-adapted stochastic process (``OU
process'') with continuous paths in $H$ and
%
%e69 #&#
%
\begin{equation}
\label{A4} Z(t,x):= e^{tA} x + W_A(t),\qquad t \in[0,T],
\end{equation}
is the unique mild solution of (\ref{A1}).

Let $b(t)$, $t \ge0$, be a progressively measurable
$H$-valued process on $(\Omega, {\mathcal F},\break {\mathcal F}_t, \P)$
such that
%
%e70 #&#
%
\begin{equation}
\label{A5} {\mathbb E} \biggl[\int_0^T
\bigl|b(s)\bigr|^2 \,ds \biggr] < \infty
\end{equation}
and
%
%e71 #&#
%
\begin{equation}
\label{A6} X(t,x):= Z(t,x) + \int_0^t
e^{(t-s)A} b(s) \,ds,\qquad t \in[0,T].
\end{equation}
We set
%
%e72 #&#
%
\begin{equation}
\label{A7} W_k(t):= \beta_k(t) e_k,\qquad t
\in[0,T], k \in\N,
\end{equation}
and define
%
%e73 #&#
%
\begin{equation}
\label{A8} Y(t):= \int_0^t \bigl\langle b(s),
dW(s) \bigr\rangle:= \sum_{k \ge1} \int
_0^t \bigl\langle b(s), e_k \bigr
\rangle\,dW_k(s),\qquad t \in[0,T].
\end{equation}
%
%le14 #&#
%
\begin{lemma} \label{lA2} The series on the right-hand side
% ight hand side
of
(\ref{A8}) converges in $L^2(\Omega,\break \P; C([0,T]; \R))$. Hence the
stochastic integral $Y(t)$ is well defined and a continuous
real-valued martingale, which is square integrable.
\end{lemma}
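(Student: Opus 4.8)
The plan is to prove convergence of the series
\[
Y(t)=\sum_{k\ge 1}\int_0^t\langle b(s),e_k\rangle\,dW_k(s)
\]
in $L^2(\Omega,\P;C([0,T];\R))$ by a Cauchy argument, using the Burkholder--Davis--Gundy inequality to control the uniform (in $t$) norm by the quadratic variation. First I would denote by $Y^{(N)}(t)=\sum_{k=1}^N\int_0^t\langle b(s),e_k\rangle\,dW_k(s)$ the partial sums. Each $Y^{(N)}$ is a genuine continuous real-valued martingale, being a finite sum of stochastic integrals against the independent scalar Brownian motions $\beta_k$; its quadratic variation is $\langle Y^{(N)}\rangle_t=\int_0^t\sum_{k=1}^N\langle b(s),e_k\rangle^2\,ds$ because the $\beta_k$ are independent (cross-variations vanish). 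For $M<N$, the increment $Y^{(N)}-Y^{(M)}$ is again a continuous martingale with quadratic variation $\int_0^t\sum_{k=M+1}^N\langle b(s),e_k\rangle^2\,ds$.

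Next I would apply the BDG inequality (with exponent $2$; in fact Doob's $L^2$-inequality already suffices here) to get
\[
\E\Big[\sup_{t\in[0,T]}\big|Y^{(N)}(t)-Y^{(M)}(t)\big|^2\Big]
\le C\,\E\Big[\int_0^T\sum_{k=M+1}^N\langle b(s),e_k\rangle^2\,ds\Big].
\]
By assumption \eqref{A5} we have $\E\big[\int_0^T|b(s)|^2\,ds\big]=\E\big[\int_0^T\sum_{k\ge 1}\langle b(s),e_k\rangle^2\,ds\big]<\infty$, so the tail $\E\big[\int_0^T\sum_{k>M}\langle b(s),e_k\rangle^2\,ds\big]$ tends to $0$ as $M\to\infty$ by monotone (or dominated) convergence. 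Hence $(Y^{(N)})_N$ is Cauchy in the Banach space $L^2(\Omega,\P;C([0,T];\R))$, and therefore converges to a limit $Y$ in that space; in particular $Y$ has (a version with) continuous paths and $\sup_{t\le T}|Y^{(N)}(t)-Y(t)|\to 0$ in $L^2(\Omega,\P)$.

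Finally I would check that the limit $Y$ retains the martingale property and square integrability. Since $Y^{(N)}(t)\to Y(t)$ in $L^2(\Omega,\P)$ for each fixed $t$, square integrability of $Y(t)$ is immediate, and $L^2$-convergence commutes with conditional expectation, so $\E[Y(t)\mid\mathcal F_s]=\lim_N\E[Y^{(N)}(t)\mid\mathcal F_s]=\lim_N Y^{(N)}(s)=Y(s)$ for $s\le t$; adaptedness passes to the limit likewise. Thus $Y$ is a continuous, square-integrable real-valued martingale, and its quadratic variation is $\langle Y\rangle_t=\int_0^t|b(s)|^2\,ds$ (identifying the limit of $\langle Y^{(N)}\rangle$). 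I do not expect a serious obstacle here; the only point requiring minor care is the interchange of the supremum over $t$ with the sum over $k$ when applying BDG, which is precisely what the vector-valued Banach-space Cauchy formulation handles cleanly, and the verification that the cross-variations between distinct $\beta_k$ vanish so that $\langle Y^{(N)}\rangle_t$ has the stated diagonal form.
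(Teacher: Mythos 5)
Your proof is correct and follows essentially the same route as the paper: the paper also controls $\E\bigl[\sup_{t\le T}|\sum_{k=m}^{n}\int_0^t\langle e_k,b(s)\rangle\,dW_k(s)|^2\bigr]$ by Doob's $L^2$-inequality, expands the square using the independence of the $\beta_k$ (so cross terms vanish, which is your quadratic-variation computation), and invokes \eqref{A5} to get the Cauchy property in $L^2(\Omega,\P;C([0,T];\R))$. Your closing remarks on passing the martingale property and square integrability to the limit are the standard details the paper leaves implicit.
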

\begin{pf}
We have for all $n, m \in\N$, $n >m$, by Doob's inequality,
\begin{eqnarray*}
&& {\mathbb E} \Biggl[ \sup_{t \in[0,T]} \Biggl| \sum_{k=m}^n
\int_0^t \bigl\langle e_k, b(s)
\bigr\rangle\,dW_k(s) \Biggr|^2 \Biggr]\\
&&\qquad \le2{\mathbb E} \Biggl[
\Biggl| \sum_{k=m}^n \int_0^T
\bigl\langle e_k, b(s)\bigr\rangle\,dW_k(s)
\Biggr|^2 \Biggr]
\\
&&\qquad= 2\sum_{k,l =m}^n {\mathbb E}
\biggl[ \int_0^T \bigl\langle
e_k, b(s)\bigr\rangle\,dW_k(s) \int_0^T
\bigl\langle e_l, b(s)\bigr\rangle\,dW_l(s) \biggr]
\\
&&\qquad= 2\sum_{k =m}^n {\mathbb E}
\biggl[ \int_0^T \bigl\langle
e_k, b(s)\bigr\rangle^2 \,ds \biggr] \to0
\end{eqnarray*}
as $m,n \to\infty$ because of (\ref{A5}). Hence the series on the
right-hand side of (\ref{A8}) converges in $L^2(\Omega, \P; C([0,T];
\R))$, and the assertion follows.
\end{pf}
%
%re15 #&#
%
\begin{remark}
\label{rA3} It can be shown that if $ \int_0^t\langle b(s),dW(s)
\rangle, t\in[0,T]$, is defined as usual, using approximations by
elementary functions (see~\cite{PrevRoeckner}, Lem\-ma~2.4.2), the
resulting process is the same.
\end{remark}

It is now easy to calculate the corresponding variation process
$\langle\int_0^\cdot\langle b(s),\break dW(s) \rangle\rangle_t$, $t\in[0,T]$.
%
%le16 #&#
%
\begin{lemma}
\label{lA4}
We have
\[
\langle Y\rangle_t=\biggl\langle\int_0^\cdot
\bigl\langle b(s),dW(s) \bigr\rangle\biggr\rangle_t=\int
_0^t\bigl|b(s)\bigr|^2\,ds,\qquad t\in[0,T].
\]
\end{lemma}
\begin{pf}
We have to show that
\[
Y^2(t)-\int_0^t\bigl|b(s)\bigr|^2\,ds,\qquad
t\in[0,T],
\]
is a martingale, that is, for all bounded $\mathcal F_t$-stopping times
$\tau$,
we have
\[
\mathbb{\mathbb E}\bigl[Y^2(\tau)\bigr]= \mathbb{\mathbb E} \biggl[
\int_0^\tau\bigl|b(s)\bigr|^2\,ds \biggr],
\]
which follows immediately as in the proof of Lemma~\ref{lA2}.
\end{pf}

Define the measure
%
%e74 #&#
%
\begin{equation}
\label{eA9} \widetilde\P:=e^{Y(T)-\langle Y\rangle_t/2}\cdot\P
\end{equation}
on $(\Omega,{\mathcal F})$, which is equivalent to $\P$. Since
$\mathcal E (t):=e^{Y(T)-\langle Y\rangle_t/2}, t\in[0,T]$, is a
nonnegative local martingale, it follows by Fatou's lemma that it is
a supermartingale, and since $\mathcal E (0)=1$, we have
\[
\mathbb E\bigl[\mathcal E (t)\bigr]\le\mathbb E\bigl[\mathcal E (0)\bigr]=1.
\]
Hence $\widetilde\P$ is a sub-probability measure.
%
%pr17 #&#
%
\begin{proposition}
\label{pA5}
Suppose that $\widetilde\P$ is a probability measure, that is,
%
%e75 #&#
%
\begin{equation}
\label{eA10} \mathbb E\bigl[\mathcal E (T)\bigr]=1.
\end{equation}
Then
\[
\widetilde W_k(t):=W_k(t)-\int_0^t
\bigl\langle e_k,b(s) \bigr\rangle\,ds,\qquad t\in[0,T], k\in\N,
\]
are independent real-valued Brownian motions starting at $0$ on
$(\Omega, {\mathcal F}, (\mathcal F_t),\widetilde{\mathbb P})$, that is,
\[
\widetilde W(t):=\bigl(\widetilde W(t)e_k\bigr)_{k\in\N},\qquad t
\in[0,T],
\]
is a cylindrical Wiener process over $H$ on $(\Omega,
{\mathcal F}, (\mathcal
F_t),\widetilde{\mathbb P})$.
\end{proposition}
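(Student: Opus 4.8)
The plan is to reduce the assertion to the classical Girsanov theorem for real-valued continuous local martingales (\cite[(1.7) Theorem, page 329]{RY}), applied coordinatewise, together with L\'evy's characterization of Brownian motion. Recall from Lemma \ref{lA2} that $Y$ is a continuous square-integrable $(\mathcal F_t,\P)$-martingale and from Lemma \ref{lA4} that $\langle Y\rangle_t=\int_0^t|b(s)|^2\,ds$, so that its Dol\'eans--Dade exponential $\mathcal E(t)=\exp\!\big(Y(t)-\tfrac12\langle Y\rangle_t\big)$ is a nonnegative local martingale, hence a supermartingale with $\mathcal E(0)=1$. The standing hypothesis \eqref{eA10}, $\mathbb E[\mathcal E(T)]=1$, is exactly what is needed to upgrade $\mathcal E$ to a genuine $(\mathcal F_t,\P)$-martingale on $[0,T]$; thus $\widetilde\P=\mathcal E(T)\cdot\P$ is a probability measure equivalent to $\P$ with density process $(\mathcal E(t))_{t\in[0,T]}$, which is the precise input required by Girsanov's theorem.

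First I would compute, for each fixed $k\in\N$, the cross-variation of the real Brownian motion $\beta_k$ with $Y$. Writing $Y^{(N)}:=\sum_{l=1}^N\int_0^\cdot\langle e_l,b(s)\rangle\,d\beta_l(s)$, one has $Y^{(N)}\to Y$ in $L^2(\Omega,\P;C([0,T];\R))$ by Lemma \ref{lA2}, whereas $\langle\beta_k,Y^{(N)}\rangle_t=\int_0^t\langle e_k,b(s)\rangle\,ds$ for every $N\ge k$, since $\langle\beta_k,\beta_l\rangle_t=\delta_{kl}\,t$. Passing to the limit (using the Kunita--Watanabe inequality to control $\langle\beta_k,Y-Y^{(N)}\rangle$) yields $\langle\beta_k,Y\rangle_t=\int_0^t\langle e_k,b(s)\rangle\,ds$. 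Applying the classical Girsanov theorem to the continuous local martingale $\beta_k$ under the change of probability from $\P$ to $\widetilde\P$, we obtain that
\[
\widetilde\beta_k(t):=\beta_k(t)-\langle\beta_k,Y\rangle_t=\beta_k(t)-\int_0^t\langle e_k,b(s)\rangle\,ds,\qquad t\in[0,T],
\]
is a continuous local martingale on $(\Omega,\mathcal F,(\mathcal F_t),\widetilde\P)$; in the notation of the statement, $\widetilde\beta_k=\widetilde W_k$ and $\widetilde W(t)=(\widetilde\beta_k(t)e_k)_{k\in\N}$.

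Next I would identify the mutual brackets of the $\widetilde\beta_k$. Since $\widetilde\beta_k-\beta_k$ has finite variation and since the quadratic covariation of continuous semimartingales is invariant under an equivalent change of measure, we get $\widetilde\P$-a.s.
\[
\langle\widetilde\beta_k,\widetilde\beta_l\rangle_t=\langle\beta_k,\beta_l\rangle_t=\delta_{kl}\,t,\qquad t\in[0,T].
\]
Fixing $n\in\N$ and applying L\'evy's characterization theorem to the $\R^n$-valued continuous local martingale $(\widetilde\beta_1,\dots,\widetilde\beta_n)$ with bracket matrix $t\cdot I$, we conclude that it is a standard $n$-dimensional Brownian motion under $\widetilde\P$; in particular $\widetilde\beta_1,\dots,\widetilde\beta_n$ are independent one-dimensional $\widetilde\P$-Brownian motions starting at $0$. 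As $n\in\N$ is arbitrary, $(\widetilde\beta_k)_{k\in\N}$ is a sequence of independent real-valued Brownian motions on $(\Omega,\mathcal F,(\mathcal F_t),\widetilde\P)$, which by definition means that $\widetilde W(t)=(\widetilde\beta_k(t)e_k)_{k\in\N}$ is a cylindrical Wiener process over $H$ on that space.

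The bracket computations and the invariance statements are routine. The one point that genuinely matters is that $\mathcal E$ be a true $\P$-martingale on $[0,T]$, which is precisely the content of the assumed normalization \eqref{eA10}; without it $\widetilde\P$ would only be a sub-probability measure and the Girsanov step would break down. A secondary, easily handled, subtlety is the passage from L\'evy's theorem for each finite family $\{\widetilde\beta_1,\dots,\widetilde\beta_n\}$ to the independence of the whole countable family, together with the limiting argument needed to evaluate $\langle\beta_k,Y\rangle$ against the infinite series defining $Y$.
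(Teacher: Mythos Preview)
Your proposal is correct and follows essentially the same route as the paper's proof: apply the classical Girsanov theorem (\cite[(1.7) Theorem, p.~329]{RY}) coordinatewise, compute $\langle\beta_k,Y\rangle_t=\int_0^t\langle e_k,b(s)\rangle\,ds$ by approximating $Y$ with its partial sums and controlling the remainder via Kunita--Watanabe (the paper phrases this as Cauchy--Schwarz on covariations), then identify the brackets $\langle\widetilde\beta_k,\widetilde\beta_l\rangle_t=\delta_{kl}\,t$ and conclude with L\'evy's characterization for each finite tuple. The only minor additions in your write-up are the explicit remark that \eqref{eA10} upgrades $\mathcal E$ to a true martingale and the observation that brackets are invariant under equivalent change of measure; both are implicit in the paper's argument.
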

\begin{pf}
By the classical Girsanov theorem (for general real-valued martingales,
see~\cite{RY}, (1.7) Theorem, page 329), it follows that for every
$k\in\N$,
\[
W_k(t)-\langle W_k,Y \rangle_t,\qquad t\in[0,T],
\]
is a local martingale under $\widetilde{\mathbb P}$. Set
\[
Y_n(t):=\sum_{k=1}^n\int
_0^t\bigl\langle e_k,b(s) \bigr
\rangle\,dW_k(s),\qquad t\in[0,T], n\in\N.
\]
Then by Cauchy--Schwartz's inequality
\[
\bigl|\langle W_k,Y-Y_n \rangle\bigr|_t=\langle
W_k \rangle_t^{1/2}\langle Y-Y_n
\rangle_t^{1/2},\qquad t\in[0,T],
\]
and since
\[
\mathbb E\bigl[\langle Y-Y_n \rangle_t\bigr]=\mathbb E
\bigl[(Y-Y_n)^2\bigr]\to0 \qquad\mbox{as } n\to\infty
\]
by Lemma~\ref{lA2}, we conclude that (selecting a subsequence if
necessary) $\P$-a.s. for all $t\in[0,T]$
\[
\langle W_k,Y \rangle_t=\lim_{n\to\infty}\langle
W_k,Y_n \rangle_t =\int_0^t
\bigl\langle e_k,b(s) \bigr\rangle\,ds,
\]
since\vspace*{1pt} $\langle W_k,W_l \rangle_t=0$ if $k\neq l$, by independence.
Hence each $\widetilde{W}_k$ is a local martingale under $\widetilde
{\P}$.

It remains to show that for every $n\in\N$, $(\widetilde
W_1,\ldots,\widetilde W_n)$ is, under $\widetilde{\P}$, an
$n$-dimensional Brownian motion. But
$\P$-a.s. for $l\neq k$
\[
\langle\widetilde W_l,\widetilde W_k
\rangle_t=\langle W_l,W_k
\rangle_t=\delta_{l,k}(t),\qquad t\in[0,T].
\]
Since $\P$ is equivalent to $\widetilde{\P}$, this also holds
$\widetilde{\P}$-a.s. Hence\vspace*{1pt} by L\`evy's characterization theorem
(\cite{RY}, (3.6) Theorem, page 150) it follows that
$(\widetilde W_1,\ldots,\widetilde W_n)$ is an $n$-dimensional Brownian
motion in $\R^n$ for all $n$, under $\widetilde{\P}$.
\end{pf}
%
%pr18 #&#
%
\begin{proposition}
\label{pA6}
Let $W_A(t), t\in[0,T]$,
be defined as in (\ref{A3}). Then there exists $\epsilon>0$ such that
\[
\E\Bigl[\exp\Bigl\{\epsilon\sup_{t\in[0,T]} \bigl|W_A(t)\bigr| \Bigr
\}^2 \Bigr]<\infty.
\]
\end{proposition}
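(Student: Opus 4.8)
The plan is to read Proposition \ref{pA6} as an instance of Fernique's theorem for Gaussian measures on a separable Banach space. The key preliminary observation, already recorded in the excerpt (see the paragraph after \eqref{ou11} and the discussion around \eqref{A3}), is that the trace-class hypothesis Tr$[(-A)^{-1+\delta}]<\infty$ forces $W_A$ to possess a version with continuous $H$-valued paths. Hence $W_A=(W_A(t))_{t\in[0,T]}$ may be viewed as a random variable with values in the separable Banach space $E:=C([0,T];H)$ equipped with the norm $\|f\|_E:=\sup_{t\in[0,T]}|f(t)|$, and it suffices to prove that $\E\big[\exp\big(\epsilon\,\|W_A\|_E^2\big)\big]<\infty$ for some $\epsilon>0$ (which is the content of the statement, and a fortiori implies the version with the exponent not squared).

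Next I would verify that the law $\nu$ of $W_A$ on $E$ is a centered Gaussian measure. Since $E$ is separable, $\nu$ is automatically Radon, so the only point is Gaussianity: for every $\ell\in E^\ast$, the real random variable $\ell(W_A)$ is centered Gaussian. This follows because $\ell(W_A)$ is an $L^2(\Omega,\P)$-limit of finite linear combinations of the form $\sum_i\langle W_A(t_i),h_i\rangle$, each of which is centered Gaussian as a linear functional of the jointly Gaussian family $(W_A(t_1),\dots,W_A(t_n))$; the latter is Gaussian because of the stochastic-convolution representation \eqref{A3} together with the Gaussianity of $W$. An $L^2$-limit of centered Gaussians is centered Gaussian, so $\nu$ is a centered Gaussian measure on $E$.

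Then I would invoke Fernique's theorem (see, e.g., \cite[Chapter 2]{DZ}): for any centered Gaussian measure $\nu$ on a separable Banach space $E$ there exists $\epsilon>0$ such that $\int_E e^{\epsilon\|x\|_E^2}\,\nu(dx)<\infty$. Applying this to $\nu=\mathcal{L}(W_A)$ gives exactly $\E\big[\exp\big(\epsilon\,\sup_{t\in[0,T]}|W_A(t)|^2\big)\big]<\infty$, which is the assertion.

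The only genuinely non-routine step is the second one, namely checking that $W_A$ induces a bona fide (Radon) Gaussian measure on $C([0,T];H)$ rather than merely a process with Gaussian finite-dimensional marginals; path-continuity makes all evaluation maps (hence all of $E^\ast$) measurable, and the Gaussianity of finite collections closes the argument. Once this is established, Fernique's theorem does the rest. Alternatively, one could avoid quoting the abstract theorem by first proving $\E\|W_A\|_E^2<\infty$ via the factorization/maximal-inequality estimates already used in the paper (together with Tr$(Q_T)\le$ Tr$(Q)<\infty$) and then running the classical Fernique iteration, or by applying the Borell--TIS inequality to the centered Gaussian field $(t,h)\mapsto\langle W_A(t),h\rangle$ indexed by $[0,T]\times\{|h|\le1\}$; but invoking Fernique's theorem directly is the cleanest route.
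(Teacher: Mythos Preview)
Your proposal is correct and follows essentially the same route as the paper: view $W_A$ as an $E=C([0,T];H)$-valued random variable, show its law is a centered Gaussian measure on $E$, and conclude by Fernique's theorem. The only difference is in the justification of Gaussianity: you assert that every $\ell\in E^\ast$ applied to $W_A$ is an $L^2$-limit of finite combinations $\sum_i\langle W_A(t_i),h_i\rangle$, whereas the paper makes this explicit via Bernstein approximations of paths (reducing general $\ell$ to point evaluations) together with the known Gaussianity of $W_A$ in $L^2(0,T;H)$---a slightly more self-contained argument, but the same idea.
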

\begin{pf}
Consider the distribution $\Q_0:=\P\circ W_A^{-1}$ of $W_A$ on
$E:=C([0,T];\break H)$. If $\Q_0$ is a Gaussian measure on $E$, the
assertion follows by Fernique's theorem. To show that $\Q_0$ is a
Gaussian measure on $E$, we have to show that for each $l$ in the
dual space $E'$ of $E$, we have that $\Q_0 \circ l^{-1}$ is Gaussian
on $\R$. We prove this in two steps.\vspace*{8pt}

\textit{Step} 1. Let $t_0\in[0,T]$, $h\in H$ and $\ell(\omega
):=\langle h,\omega(t_0)\rangle$ for $\omega\in E$. To see that $\Q_0
\circ\ell^{-1}$ is Gaussian on $\R$, consider a sequence $\delta_k\in
C([0,T];\R), k\in\N$,
such that $\delta_k(t)\,dt$ converges weakly to the Dirac measure
$\epsilon_{t_0}$.
Then for all
$\omega\in E$,
\[
\ell(\omega)=\lim_{k\to\infty}\int_0^T
\bigl\langle h,\omega(s)\bigr\rangle\delta_k(s)\,ds =
\lim_{k\to\infty} \int_0^T\bigl\langle h
\delta_k(s),\omega(s) \bigr\rangle\,ds.
\]
Since (e.g., by~\cite{D}, Proposition 2.15, the law of $W_A$ in
$L^2(0,T;H)$ is Gaussian, it follows that the distribution of $\ell$
is Gaussian.\vspace*{8pt}

\textit{Step} 2. The following argument is taken from
\cite{DL}, Proposition A.2. Let $\omega\in E$; then we can consider its
Bernstein approximation
\[
\beta_n(\omega) (t):=\sum_{k=1}^n
\pmatrix{n
\cr
k}\varphi_{k,n}(t)\omega(k/n),\qquad n\in\N,
\]
where $\varphi_{k,n}(t):=t^k(1-t)^{n-k}$. But the linear map
\[
H\ni x \to\ell(x\varphi_{k,n})\in\R
\]
is continuous in $H$, and hence there exists $h_{k,n}\in H$ such that
\[
\ell(x\varphi_{k,n})=\langle h_{k,n},x \rangle,\qquad x\in H.
\]
Since $\beta_n(\omega)\to\omega$ uniformly for all $\omega\in E$,
it follows that for all $\omega\in E$,
\[
\ell(\omega)=\lim_{n\to\infty}\ell\bigl(\beta_n(\omega)\bigr)=
\lim_{n\to
\infty}\sum_{k=1}^n
\pmatrix{n
\cr
k}\bigl\langle h_{k,n},\omega(k/n) \bigr\rangle,\qquad n\in\N.
\]
Hence it follows by step 1 that $\Q_0\circ l^{-1}$ is
Gaussian.
\end{pf}

Now we turn to SDE (\ref{A1}) and define
%
%e76 #&#
%
\begin{eqnarray}
\label{eA11} %
M&:=&e^{\int_0^T \langle B(e^{tA}x+W_A(t)),dW(t) \rangle-
(1/2)\int_0^T|B(e^{tA}x+W_A(t))|^2\,dt},
\nonumber\\[-8pt]\\[-8pt]
\widetilde{\P}_x&:=&M\P.
\nonumber
\end{eqnarray}
Obviously, Proposition~\ref{pA7} below implies (\ref{A5}) and that
hence $M$ is well defined.
%
%pr19 #&#
%
\begin{proposition}
\label{pA7} $\widetilde{\P}_x$ is a probability measure on
$(\Omega,{\mathcal F})$, that is,\break \mbox{$\E(M)=1$}.
\end{proposition}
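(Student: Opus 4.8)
The plan is to recognise $M$ as the value at time $T$ of a stochastic exponential and to reduce the assertion $\E(M)=1$ to the statement that this exponential is a genuine martingale, not merely a local one. Writing $Z(s,x)=e^{sA}x+W_A(s)$ and
$$
N_t:=\int_0^t\langle B(Z(s,x)),dW(s)\rangle,\qquad t\in[0,T],
$$
Lemmas \ref{lA2} and \ref{lA4} show that $N$ is a well-defined continuous, square-integrable martingale with $\langle N\rangle_t=\int_0^t|B(Z(s,x))|^2\,ds$. Hence ${\mathcal E}_t:=\exp\big(N_t-\tfrac12\langle N\rangle_t\big)$ solves $d{\mathcal E}_t={\mathcal E}_t\,dN_t$ and is a nonnegative local martingale, so a supermartingale with $\E[{\mathcal E}_t]\le{\mathcal E}_0=1$; since $M={\mathcal E}_T$, what has to be shown is exactly $\E[{\mathcal E}_T]=1$, which, for a nonnegative supermartingale on $[0,T]$, is equivalent to ${\mathcal E}$ being a martingale.

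When $B$ is bounded this is immediate from Novikov's criterion, since $\E\big[\exp(\tfrac12\langle N\rangle_T)\big]\le\exp(\tfrac12 T\|B\|_0^2)<\infty$; for an ${\mathcal F}_0$-measurable $x$ one conditions on ${\mathcal F}_0$ (the increments of $W$ being independent of ${\mathcal F}_0$) and applies the deterministic case $\omega$-wise. The substance of the proof is therefore the case of $B$ of at most linear growth, $|B(y)|\le c(1+|y|)$, with deterministic $x$. Here the decisive estimate is that, for every $t\le T$,
$$
\langle N\rangle_t=\int_0^t|B(Z(s,x))|^2\,ds\le C_0\,t\,\Big(1+|x|^2+\sup_{0\le s\le T}|W_A(s)|^2\Big)=:C_0\,t\,V,
$$
with $C_0=C_0(c)$, while Proposition \ref{pA6} furnishes an $\varepsilon_0>0$ with $\E[\exp(\varepsilon_0 V)]<\infty$.

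The plan is then a localisation-and-concatenation argument. Pick a partition $0=t_0<t_1<\dots<t_n=T$ with mesh so fine that $\tfrac12 C_0\,(t_j-t_{j-1})\le\varepsilon_0$ for every $j$. On each interval $[t_{j-1},t_j]$ one has $Z(s,x)=e^{(s-t_{j-1})A}Z(t_{j-1},x)+\int_{t_{j-1}}^{s}e^{(s-r)A}dW(r)$, the first summand being ${\mathcal F}_{t_{j-1}}$-measurable and the second a fresh Ornstein--Uhlenbeck convolution independent of ${\mathcal F}_{t_{j-1}}$; together with the estimate above and the choice of mesh this yields the conditional Novikov condition $\E\big[\exp\big(\tfrac12(\langle N\rangle_{t_j}-\langle N\rangle_{t_{j-1}})\big)\,\big|\,{\mathcal F}_{t_{j-1}}\big]<\infty$ a.s.\ (the ${\mathcal F}_{t_{j-1}}$-measurable part factors out of the conditional expectation, the independent part being controlled by $\E[\exp(\varepsilon_0 V)]<\infty$). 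Hence $\E[{\mathcal E}_{t_j}\,|\,{\mathcal F}_{t_{j-1}}]={\mathcal E}_{t_{j-1}}$ for each $j$, and multiplying these identities along the partition gives $\E[{\mathcal E}_T]={\mathcal E}_0=1$. (Alternatively one may work with the stopping times $\tau_R=\inf\{t:|W_A(t)|>R\}\wedge T$, for which $\E[{\mathcal E}_{T\wedge\tau_R}]=1$ since the integrand is then bounded, and pass to $R\to\infty$ using uniform integrability of $({\mathcal E}_{T\wedge\tau_R})_R$; this again rests on the $R$-independent bound $\langle N\rangle_{T\wedge\tau_R}\le C_0 T\,V$ and Proposition \ref{pA6}, and likewise forces one to split $[0,T]$ into short subintervals.) I expect the main obstacle to be precisely this control of $\langle N\rangle$ under an exponential in the linearly growing case: Novikov's criterion cannot be applied on all of $[0,T]$ at once, because $\E[\exp(\gamma\int_0^T|W_A(s)|^2\,ds)]$ is finite only for small $\gamma$ by Fernique's theorem on $L^2(0,T;H)$, so the argument has to be run on sufficiently short time intervals and reassembled through the (conditional) martingale identity — everything else being routine.
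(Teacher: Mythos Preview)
Your proposal is correct and follows essentially the same route as the paper: both arguments pivot on Proposition~\ref{pA6} (the Fernique-type bound for $\sup_{t\le T}|W_A(t)|$), partition $[0,T]$ into sufficiently short subintervals so that Novikov's condition holds on each piece, and then concatenate to get $\E[{\mathcal E}_T]=1$. The paper's execution is slightly cleaner---it introduces $B_i:=\one_{(t_{i-1},t_i]}B$ and applies the \emph{unconditional} Novikov criterion to each $\mathcal E_i$ (citing \cite[Cor.~5.14, 5.16]{KS}), then telescopes using $\mathcal E_i(t_{i-1})=1$---whereas your phrasing via a ``conditional Novikov condition'' and the Markov decomposition of $Z$ is a small detour: your own bound $\langle N\rangle_{t_j}-\langle N\rangle_{t_{j-1}}\le C_0(t_j-t_{j-1})V$ already yields the unconditional piecewise Novikov directly, so the ``factoring out'' step (which is slightly muddled, since $V$ involves the sup over all of $[0,T]$ and is not ${\mathcal F}_{t_{j-1}}$-measurable) is unnecessary.
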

\begin{pf}
As before we set $Z(t,x):=e^{tA}x+W_A(t), t\in[0,T]$. By
Proposition~\ref{pA6} the arguments below are standard (see, e.g.,
\cite{KS}, Corollaries 5.14 and 5.16, pages 199 and 200). Since $B$ is
of at most linear growth, by\vspace*{2pt} Proposition~\ref{pA6} we can find $N\in\N$
large enough such that for all $0\le i\le N$ and $t_i:=\frac{iT}N$,
\[
\E\bigl[ e^{(1/2)\int_{t_{i-1}}^{t_i}
|B(e^{tA}x+W_A(t))|^2\,dt} \bigr]<\infty.
\]
Defining $B_i(e^{tA}x+W_A(t)):=\one
_{(t_{i-1},t_i]}(t)B(e^{tA}x+W_A(t))$, it follows from Novikov's
criterion (\cite{RY}, (1.16) Corollary, page 333)
that for all $1\le i\le N$,
\[
\mathcal E_i (t):=e^{(1/2)\int_{0}^{t} \langle
B_i(e^{sA}x+W_A(s)),dW(s) \rangle-(1/2) \int
_0^t|B_i(e^{sA}x+W_A(s)|^2\,ds },\qquad t\in[0,T],
\]
is an $\mathcal F_t$-martingale under $\P$. But then since $\mathcal
E_i (t_{i-1})=1$, by the martingale property of each $\mathcal E_i $,
we can conclude that
\begin{eqnarray*}
&&\E\bigl[ e^{ \int_{0}^{t} \langle B(e^{sA}x+W_A(s)),dW(s)
\rangle-(1/2) \int_0^t|B(e^{sA}x+W_A(s)|^2\,ds } \bigr]
\\
&&\qquad=\E\bigl[\mathcal E_N (t_N)\mathcal
E_{N-1} (t_{N-1})\cdots\mathcal E_1
(t_1) \bigr]
\\
&&\qquad=\E\bigl[\mathcal E_N (t_{N-1})\mathcal
E_{N-1} (t_{N-1})\cdots\mathcal E_1
(t_1) \bigr]
\\
&&\qquad=\E\bigl[\mathcal E_{N-1} (t_{N-1})\cdots\mathcal
E_1 (t_1) \bigr]
\\
&&\qquad\hspace*{3.4pt}\vdots
\\
&&\qquad=\E\bigl[ \mathcal E_1 (t_1) \bigr]=\E\bigl[\mathcal
E_1 (t_0) \bigr] =1.
\end{eqnarray*}
\upqed
\end{pf}
%
%re20 #&#
%
\begin{remark} \label{obvio}
It is obvious from the previous proof that $x$
may always be replaced by an ${\mathcal F}_0$-measurable $H$-valued
r.v. which is exponentially integrable, and by any ${\mathcal
F}_0$-measurable $H$-valued r.v. if $B$ is bounded. The same holds
for the rest of the proof of Theorem~\ref{tA1}, that is, the following
two propositions.
\end{remark}
%
%pr21 #&#
%
\begin{proposition}
\label{pA9}
We have $\widetilde{\P}_x$-a.s.
%
%e77 #&#
%
\begin{eqnarray}
\label{eA12} Z(t,x)&=&e^{tA}x+\int_0^te^{(t-s)A}B
\bigl(Z(s,x)\bigr)\,ds\nonumber\\[-8pt]\\[-8pt]
&&{}+\int_0^te^{(t-s)A}\,d
\widetilde{W}(s),\qquad t\in[0,T],\nonumber
\end{eqnarray}
where $\widetilde{W}$ is the cylindrical Wiener process under
$\widetilde{\P}_x$ introduced in Proposition~\ref{pA5} with
$b(s):=B(Z(s,x))$, which applies because of Proposition~\ref{pA7},
that is, under~$\widetilde{\P}_x$, $Z(\cdot,x)$ is a mild solution of
\[
dZ(t)=\bigl(AZ(t)+B\bigl(Z(t)\bigr)\bigr)\,dt+d\widetilde{W}(t),\qquad t\in[0,T],
Z(0)=x.
\]
\end{proposition}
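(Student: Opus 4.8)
The plan is to obtain \eqref{eA12} by substituting, directly inside the stochastic convolution defining $Z(\cdot,x)$, the Girsanov relation between $W$ and $\widetilde W$ supplied by Proposition \ref{pA5}. Recall from \eqref{A3}--\eqref{A4} that $Z(t,x)=e^{tA}x+W_A(t)$ with $W_A(t)=\int_0^t e^{(t-s)A}\,dW(s)$, that Proposition \ref{pA7} yields $\mathbb{E}[M]=1$, and that condition \eqref{A5} for $b(s):=B(Z(s,x))$ follows from Proposition \ref{pA6} and the at most linear growth of $B$; hence Proposition \ref{pA5} applies with this $b$ and produces the cylindrical Wiener process $\widetilde W$ under $\widetilde{\P}_x$, whose components satisfy $\widetilde\beta_k(t)=\beta_k(t)-\int_0^t\langle e_k,B(Z(s,x))\rangle\,ds$ for $k\in\N$.

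First I would fix $t\in[0,T]$ and work with the partial sums $S_n(t):=\sum_{k=1}^n e_k\int_0^t e^{-(t-s)\lambda_k}\,d\beta_k(s)$, which involve only finitely many one-dimensional Brownian motions. Applying the classical one-dimensional Girsanov transformation in each coordinate $k=1,\dots,n$ gives
\begin{equation*}
S_n(t)=\sum_{k=1}^n e_k\int_0^t e^{-(t-s)\lambda_k}\,d\widetilde\beta_k(s)+\int_0^t e^{(t-s)A}\big(\pi_n B(Z(s,x))\big)\,ds ,
\end{equation*}
where $\pi_n$ denotes the orthogonal projection onto $\mathrm{span}\{e_1,\dots,e_n\}$. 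Then I would let $n\to\infty$. Since the trace class hypothesis on $(-A)^{-1+\delta}$ gives $\int_0^t\|e^{(t-s)A}\|^2\,ds<\infty$, the left-hand side converges to $W_A(t)$ in $L^2(\Omega,\P;H)$ (this is how the stochastic convolution is defined), hence also in $\widetilde{\P}_x$-probability because $\widetilde{\P}_x\sim\P$; by the same reasoning applied to the cylindrical Wiener process $\widetilde W$ under $\widetilde{\P}_x$, the first term on the right converges in $L^2(\Omega,\widetilde{\P}_x;H)$ to $\int_0^t e^{(t-s)A}\,d\widetilde W(s)$; and the last term converges $\widetilde{\P}_x$-a.s.\ to $\int_0^t e^{(t-s)A}B(Z(s,x))\,ds$ by dominated convergence, since $\pi_n B(Z(s,x))\to B(Z(s,x))$ in $H$ with $|\pi_n B(Z(s,x))|\le|B(Z(s,x))|$ and $e^{(t-s)A}$ is a contraction. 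Passing to the limit and adding $e^{tA}x$ yields \eqref{eA12} for the fixed $t$; continuity in $t$ of all three processes involved then upgrades this to an identity holding simultaneously for all $t\in[0,T]$, $\widetilde{\P}_x$-a.s.

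The step that needs care is the interchange of the infinite sum over $k$ with both the change of measure and the $n\to\infty$ limit, i.e.\ checking that the stochastic convolution, which is only defined as an $L^2(\P)$-limit, is transformed into the stochastic convolution with respect to $\widetilde W$ under $\widetilde{\P}_x$ plus precisely the Bochner integral $\int_0^t e^{(t-s)A}B(Z(s,x))\,ds$; here the equivalence $\widetilde{\P}_x\sim\P$ is what lets one move freely between convergence in $L^2(\P)$, in $L^2(\widetilde{\P}_x)$, and in probability. An alternative that sidesteps handling the convolution is to test \eqref{eA12} against $h\in D(A)$, reducing it to the weak (variational) formulation of \eqref{A1}: one uses that, under $\P$, $Z(\cdot,x)$ is a weak solution of \eqref{A2}, substitutes $\langle W(t),h\rangle=\langle\widetilde W(t),h\rangle+\int_0^t\langle B(Z(s,x)),h\rangle\,ds$ (which is Proposition \ref{pA5} read coordinate-wise), and then invokes the standard equivalence of weak and mild solutions for \eqref{A1}; the bookkeeping is essentially the same.
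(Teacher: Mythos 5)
Your proposal is correct and follows essentially the same route as the paper: the paper also reduces \eqref{eA12} to the coordinate identities along the eigenbasis $(e_k)$, where $Ae_k=-\lambda_k e_k$ makes each component a one-dimensional statement that is immediate from the definition of $\widetilde W_k$, with Proposition \ref{pA6} and the linear growth of $B$ justifying the resummation. You merely spell out the $n\to\infty$ convergence of the partial sums (and the measure-equivalence bookkeeping) that the paper dismisses as ``obvious.''
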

\begin{pf}
Since $B$ is of at most linear growth and because of Proposition~\ref{pA6},
to prove (\ref{eA12}), it is enough to show that for all $k\in\N$
and $x_k:=\langle e_k,x \rangle$ for $x\in H$ we have, since
$Ae_k=-\lambda_k e_k$, that
\[
dZ_k(t,x)=\bigl(-\lambda_k Z_k(t,x)+B_k
\bigl(Z(t,x)\bigr)\bigr)\,dt+d\widetilde{W}_k(t),\qquad t\in[0,T], Z(0)=x.
\]
But this is obvious by the definition of $\widetilde{W}_k$.
\end{pf}

Proposition~\ref{pA9} settles the existence part of Theorem \ref
{tA1}. Now let us turn to the uniqueness part and complete the proof of
Theorem~\ref{tA1}.
%
%pr22 #&#
%
\begin{proposition}
\label{pA10}
%a r.v. (it seems to be correct)}
The weak solution to (\ref{A1}) constructed above is unique and its
law is equivalent to $\Q_x$
with density in $L^p(\Omega, \P)$ for all $p \ge1$.
\end{proposition}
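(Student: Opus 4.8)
The plan is to derive uniqueness \emph{in law} by running the Girsanov transformation of Propositions~\ref{pA5}--\ref{pA7} backwards, and then to recognise the Radon--Nikodym density of $\P_x$ with respect to $\Q_x$ as one fixed Borel functional of the trajectory, the same for every weak solution.

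First I would take an arbitrary weak mild solution: a cylindrical Wiener process $\widehat W$ and a process $\widehat X$ solving \eqref{A1} with $\widehat X(0)=x$ on some space $(\widehat\Omega,\widehat{\mathcal F},\widehat{\mathcal F}_t,\widehat\P)$. Since $B$ has at most linear growth, Gronwall's lemma applied to the mild formula together with the analogue of Proposition~\ref{pA6} for the stochastic convolution of $\widehat W$ shows that $\sup_{t\le T}|\widehat X(t)|$ has a Gaussian-type tail; splitting $[0,T]$ into enough subintervals and repeating the proof of Proposition~\ref{pA7} verbatim then yields $\E_{\widehat\P}[\widehat N]=1$ for $\widehat N:=\exp\big(-\int_0^T\langle B(\widehat X(s)),d\widehat W(s)\rangle-\tfrac12\int_0^T|B(\widehat X(s))|^2\,ds\big)$. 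Setting $\widehat\Q:=\widehat N\,\widehat\P$ and applying Proposition~\ref{pA5} with $b(s):=-B(\widehat X(s))$, the process $\widehat V(t):=\widehat W(t)+\int_0^t B(\widehat X(s))\,ds$ becomes a cylindrical Wiener process under $\widehat\Q$, and \eqref{A1} turns into $d\widehat X(t)=A\widehat X(t)\,dt+d\widehat V(t)$, $\widehat X(0)=x$. By uniqueness of mild solutions of the linear equation \eqref{A2} we get $\widehat X(t)=e^{tA}x+\int_0^t e^{(t-s)A}\,d\widehat V(s)$ for $t\in[0,T]$, $\widehat\Q$-a.s., so the law of $\widehat X$ under $\widehat\Q$ equals $\Q_x$.

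Next I would pass from $\widehat\Q$ back to $\widehat\P$. The key observation is that under $\widehat\Q$ each component satisfies $d\widehat X_k=-\lambda_k\widehat X_k\,dt+d\widehat V_k$, so $\widehat V_k(t)=\widehat X_k(t)-\langle e_k,x\rangle+\lambda_k\int_0^t\widehat X_k(s)\,ds$ is a measurable functional of the path of $\widehat X$; since Itô integrals are path-measurable functionals of their integrands and integrators, so is each $\int_0^T B_k(\widehat X(s))\,d\widehat V_k(s)$, and, using Lemma~\ref{lA2}, so is $\int_0^T\langle B(\widehat X(s)),d\widehat V(s)\rangle=\sum_k\int_0^T B_k(\widehat X(s))\,d\widehat V_k(s)$. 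Since $\widehat N^{-1}=\exp\big(\int_0^T\langle B(\widehat X(s)),d\widehat V(s)\rangle-\tfrac12\int_0^T|B(\widehat X(s))|^2\,ds\big)$, this makes $\widehat N^{-1}=G(\widehat X)$ for a Borel functional $G:C([0,T];H)\to(0,\infty)$ not depending on the chosen weak solution. As $\widehat\P=\widehat N^{-1}\widehat\Q$, the law of $\widehat X$ under $\widehat\P$ equals $G\cdot\Q_x$; applied to two arbitrary weak solutions this gives uniqueness in law, and applied to the solution of Proposition~\ref{pA9} it identifies $\P_x=G\cdot\Q_x$, which is equivalent to $\Q_x$ since $G>0$ $\Q_x$-a.s.

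Finally, for the $L^p$ bound I would use \eqref{eA.11}: for bounded Borel $F$ on $C([0,T];H)$, $\int F\,d\P_x=\int F(Z(\cdot,x))\,M\,d\P$ with $Z(\cdot,x)$ as in \eqref{A4}, whence $\frac{d\P_x}{d\Q_x}(\xi)=\E_\P[M\mid Z(\cdot,x)=\xi]$ and, by conditional Jensen, $\int\big(\frac{d\P_x}{d\Q_x}\big)^p\,d\Q_x\le\E_\P[M^p]$ for all $p\ge1$. I would then write $M^p$ as the product of the nonnegative martingale $\exp\big(p\int_0^T\langle B(Z(s,x)),dW(s)\rangle-\tfrac{p^2}{2}\int_0^T|B(Z(s,x))|^2\,ds\big)$, whose $\P$-expectation is $1$ by the argument of Proposition~\ref{pA7} with $pB$ in place of $B$, times the factor $\exp\big(\tfrac{p(p-1)}{2}\int_0^T|B(Z(s,x))|^2\,ds\big)$, which is bounded when $B\in B_b(H,H)$ (and controlled via Proposition~\ref{pA6}, hence integrable for $p$ below an explicit threshold, in the merely linearly growing case); this yields $\frac{d\P_x}{d\Q_x}\in L^p$ for all $p\ge1$. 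I expect the only genuinely subtle point to be the path-measurability step above: because $A$ regularises, the original noise $\widehat W$ cannot be recovered from the path of $\widehat X$, and one must instead reconstruct the \emph{shifted} Wiener process $\widehat V$ component by component from the linear equation it drives. The same reasoning covers ${\mathcal F}_0$-measurable $x$ as in Remark~\ref{obvio}, completing the proof of Theorem~\ref{tA1}.
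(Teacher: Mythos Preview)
Your approach is essentially the same as the paper's: take an arbitrary weak mild solution, perform the backward Girsanov change with density $\widehat N$ to turn it into an OU process, and then recover the shifted Wiener process $\widehat V_k$ componentwise from the linear equation $d\widehat X_k=-\lambda_k\widehat X_k\,dt+d\widehat V_k$, so that the Radon--Nikodym density $\widehat N^{-1}$ becomes a fixed Borel functional of the path; the paper writes this via $\widetilde W_k(t)=\langle e_k,\widetilde W_A(t)\rangle+\lambda_k\int_0^t\langle e_k,\widetilde W_A(s)\rangle\,ds$, which is the same identity expressed through $\widetilde W_A=X(\cdot,x)-e^{\cdot A}x$.

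One point worth noting: the paper's own proof actually stops at equivalence and does \emph{not} spell out the $L^p$ bound asserted in the statement, whereas you do supply it (correctly, for bounded $B$, via $M^p=\mathcal E(p\!\int\langle B(Z),dW\rangle)_T\cdot\exp\!\big(\tfrac{p(p-1)}{2}\int_0^T|B(Z)|^2ds\big)$ with the second factor deterministically bounded). Your caveat that for merely linearly growing $B$ the argument only gives $L^p$ for $p$ below a Fernique threshold is honest and appropriate; the ``all $p\ge 1$'' claim in the statement should really be read under the standing bounded-$B$ hypothesis of the main body.
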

\begin{pf}
Let $X(t,x), t\in[0,T]$, be a weak solution to (\ref{A1}) on a
filtered probability space $(\Omega, {\mathcal F}, (\mathcal
F_t),\P)$ for a cylindrical process of type (\ref{A0}). Hence
\[
X(t,x)=e^{tA}x+W_A(t)+\int_0^te^{(t-s)A}B
\bigl(X(s,x)\bigr)\,ds.
\]
Since $B$ is at most of linear growth, it follows from Gronwall's
inequality that for some
constant $C\ge0$,
\[
\sup_{t\in[0,T]}\bigl|X(t,x)\bigr|\le C_1 \Bigl(1+\sup_{t\in
[0,T]}\bigl|e^{tA}x+W_A(t)\bigr|
\Bigr).
\]
Hence by Proposition~\ref{pA6},
%
%e78 #&#
%
\begin{equation}
\label{74} \E\Bigl[\exp\Bigl\{\epsilon\sup_{t\in[0,T]}\bigl|X(t,x)\bigr| \Bigr
\}^2 \Bigr]<\infty.
\end{equation}
Define
\[
M:=e^{-\int_0^T \langle B(X(s,x)),dW(s) \rangle-(1/2)\int
_0^T|B(X(s,x))|^2\,ds}
\]
and $\widetilde{\P}:=M\cdot\P$. Then by exactly the same
arguments as above,
\[
\E[M]=1.
\]
Hence by Proposition~\ref{pA5}, defining
\[
\widetilde{W}_k(t):=W_k(t)+\int_0^t
\bigl\langle e_k,B\bigl(X(s,x)\bigr) \bigr\rangle\,ds,\qquad
t\in[0,T], k\in
\N,
\]
we obtain that $\widetilde{W}(t):=(\widetilde{W}_k(t)e_k)_{k\in\N}$
is a cylindrical Wiener process under $\widetilde{\P}$ and thus
\[
\widetilde{W}_A(t):=\int_0^te^{(t-s)A}\,d
\widetilde{W}(s)=W_A(t)+ \int_0^te^{(t-s)A}B
\bigl(X(s,x)\bigr)\,ds,\qquad t\in[0,T],
\]
and therefore,
\[
X(t,x)=e^{tA}x+\widetilde{W}_A(t),\qquad t\in[0,T],
\]
is an Ornstein--Uhlenbeck process under $\widetilde{\P}$ starting at
$x$. But since it is easy to see that
\[
\int_0^T\bigl\langle B\bigl(X(s,x)
\bigr),dW(s)\bigr\rangle=\int_0^T\bigl\langle
B\bigl(X(s,x)\bigr),d\widetilde{W}(s)\bigr\rangle-\int_0^T\bigl|B
\bigl(X(s,x)\bigr)\bigr|^2\,ds,
\]
it follows that
\[
\P=e^{\int_0^T\langle B(X(s,x)),\widetilde{W}(s)\rangle-
(1/2)\int_0^T|B(X(s,x))|^2\,ds}\cdot\widetilde{\P}.
\]
Since
\[
\widetilde{W}_k(t)=\bigl\langle e_k,
\widetilde{W}_A(t) \bigr\rangle+\lambda_k\int
_0^t\bigl\langle e_k,
\widetilde{W}_A(s)\bigr\rangle\,ds,
\]
and since $ X(s,x)=e^{sA}x+\widetilde{W}_A(s), $ it follows that
$\int_0^T\langle B(X(s,x)),d\widetilde{W}(s)\rangle$ is measurable
with respect to the $\sigma$-algebra generated by $\widetilde{W}_A$.
Hence $ \frac{d\P}{d\widetilde{\P}}=\rho_x(X(\cdot,x))$ for some
$\rho_x\in\mathcal B (C([0,T];H) )$, and thus setting
$\Q_x:=\P_x\circ X(\cdot,x)^{-1}$, we get
\[
\P_x:=\P\circ X(\cdot,x)^{-1}=\rho_x
\Q_x.
\]

But since it is well known that the mild solution of (\ref{A2}) is
unique in distribution,
the assertion follows, because clearly $\rho_x>0$, $\Q_ x$-a.s.
\end{pf}

%s5.2 #&#
\subsection{A useful lemma}\label{aapA.2}
%{One lemma}
%
%le23 #&#
%
\begin{lemma} \label{da}
Let $f \in W^{1,2}(H, \mu) \cap C_b(H)$.
Let $X = (X_t)$ and $Y = (Y_t)$ be
two solutions to (\ref{SPDE}) starting from a deterministic
$x \in H$ or from
a r.v. $x $ as in Theorem~\ref{maintheorem}. Let $t \ge0$.
Then for $dt \otimes\P$-a.e. $(t, \omega)$, we have
%
%e79 #&#
%e80 #&#
%
\begin{equation}
\label{75}
\int_0^1 \bigl| Df \bigl(r
X_t(\omega) + (1-r)Y_t(\omega)\bigr)\bigr| \,dr < \infty
\end{equation}
and
\begin{eqnarray}
\label{tes2}
&&
f\bigl(X_t(\omega)\bigr) - f
\bigl(Y_t(\omega)\bigr)
\nonumber\\[-8pt]\\[-8pt]
&&\qquad = \int_0^1 \bigl\langle Df \bigl(r
X_t(\omega) + (1-r)Y_t(\omega)\bigr), X_t(
\omega)- Y_t(\omega) \bigr\rangle\,dr.
\nonumber
\end{eqnarray}
\end{lemma}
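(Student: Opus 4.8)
The plan is to deduce the identity from its elementary validity for smooth functions, the only genuine work being the passage to the limit in the right-hand side of \eqref{tes2}, which must be carried out without any quantitative control \emph{in time} of the densities of the laws of $X$ and $Y$ with respect to $\mu$. First I would fix $T>0$ and set $f_n:=R_{1/n}f$. By \cite{DZ1} (see \eqref{e3}) one has $f_n\in C^1_b(H)$ and $\|f_n\|_0\le\|f\|_0$; moreover, checking the commutation relation $DR_t=e^{tA}R_tD$ first on smooth cylindrical functions and then by density on $W^{1,2}(H,\mu)$, the strong continuity of $R_t$ on $L^2$ gives $f_n\to f$ in $L^2(H,\mu)$ and $Df_n=e^{\frac1nA}R_{1/n}Df\to Df$ in $L^2(H,\mu;H)$; passing to a subsequence I may assume these convergences also hold $\mu$-a.e. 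For each $n$, applying the fundamental theorem of calculus to $r\mapsto f_n(rX_t+(1-r)Y_t)$ gives, for \emph{every} $\omega$ and every $t\ge0$,
\begin{equation}\label{plansmooth}
f_n(X_t)-f_n(Y_t)=\int_0^1\big\langle Df_n(Z_t^r),X_t-Y_t\big\rangle\,dr,\qquad Z_t^r:=rX_t+(1-r)Y_t,
\end{equation}
so everything reduces to letting $n\to\infty$ here.

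The key estimate is provided by the Girsanov change of measure already used in the proof of Proposition \ref{stop}. Since $X$ and $Y$ are mild solutions driven by the same $W$, one has $Z_t^r=e^{tA}x+\int_0^te^{(t-s)A}\bar B_s^r\,ds+W_A(t)$ with $\bar B_s^r:=rB(X_s)+(1-r)B(Y_s)$ and $|\bar B_s^r|\le\|B\|_0$; hence there is a probability measure $\widetilde\P_r\sim\P$ with $d\widetilde\P_r/d\P=\rho_r$ under which $Z^r_\cdot=e^{\cdot A}x+\widetilde W_A(\cdot)$ is an Ornstein--Uhlenbeck process, so that, under $\widetilde\P_r$, $Z_t^r$ has law $\mu_t^x:=N(e^{tA}x,Q_t)$. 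Fixing $p'\in(1,2)$ and its conjugate exponent $p$, a double Hölder inequality yields, for every $g\in L^2(H,\mu;H)$, every $r\in[0,1]$ and every $t>0$,
\begin{equation}\label{plankey}
\E\big|g(Z_t^r)\big|=\widetilde\E_r\big[\rho_r^{-1}|g(Z_t^r)|\big]\le\big(\widetilde\E_r[\rho_r^{-p}]\big)^{1/p}\Big(\int_H|g|^{p'}\,d\mu_t^x\Big)^{1/p'}\le C_p\,\|g\|_{L^2(\mu)}\,\psi(t),
\end{equation}
where $\widetilde\E_r[\rho_r^{-p}]=\E[\rho_r^{1-p}]$ is finite and bounded uniformly in $r$ and $x$ because $B$ is bounded (exactly as in Step 2 of the proof of Proposition \ref{stop}), the last inequality uses Hölder with exponents $2/p'$ and $2/(2-p')$ together with $k_t(x,\cdot):=d\mu_t^x/d\mu\in L^{2/(2-p')}(H,\mu)$, and $\psi(t):=\|k_t(x,\cdot)\|_{L^{2/(2-p')}(\mu)}^{1/p'}<\infty$ for every $t>0$ (a standard Gaussian density computation, see \cite{DZ1}). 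For a random $\mathcal F_0$-measurable initial datum $x$ as in Theorem \ref{main theorem} one replaces $\mu_t^x$ by $\int_H\mu_t^y\,\mu_0(dy)$ and uses $d\mu_0/d\mu\in L^\zeta(\mu)$, arguing as in Step 4 of the proof of Proposition \ref{stop}.

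Now put $w(t):=(1+\psi(t))^{-1}\one_{(0,T)}(t)$, which is strictly positive for $dt$-a.e.\ $t\in(0,T)$. Integrating \eqref{plankey} (once with $g=Df$, once with $g=Df_n-Df$) against $w(t)\,dt\otimes\P$ gives
\begin{equation}\label{planconv}
\E\int_0^T\! w(t)\!\int_0^1\! |Df(Z_t^r)|\,dr\,dt\le C_pT\|Df\|_{L^2(\mu)}<\infty,\qquad \E\int_0^T\! w(t)\!\int_0^1\! |Df_n(Z_t^r)-Df(Z_t^r)|\,dr\,dt\xrightarrow[n\to\infty]{}0.
\end{equation}
The first bound is \eqref{75}; the second, after passing to a further subsequence, gives $\int_0^1|Df_n(Z_t^r)-Df(Z_t^r)|\,dr\to0$ for $dt\otimes\P$-a.e.\ $(t,\omega)$. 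Since for $t>0$ the laws of $X_t$ and of $Y_t$ are absolutely continuous with respect to $\mu$ (Girsanov, as already observed in the proof of Theorem \ref{p4}), $\mu$-a.e.\ convergence of $f_n$ yields $f_n(X_t)\to f(X_t)$ and $f_n(Y_t)\to f(Y_t)$ for $dt\otimes\P$-a.e.\ $(t,\omega)$; combining this with $\big|\int_0^1\langle Df_n(Z_t^r)-Df(Z_t^r),X_t-Y_t\rangle dr\big|\le|X_t-Y_t|\int_0^1|Df_n(Z_t^r)-Df(Z_t^r)|dr\to0$ and letting $n\to\infty$ in \eqref{plansmooth} gives \eqref{tes2}; a union over $T\in\N$ concludes.

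The step I expect to be the main obstacle is precisely the estimate \eqref{plankey}: bounding $\E|g(Z_t^r)|$ naively through $d\pi_t(x,\cdot)/d\mu\in L^2(\mu)$ integrated over $t\in(0,T)$ is exactly the type of inequality \eqref{gir} which the paper cannot prove, so one is forced to use the Girsanov trick turning $Z^r$ into an Ornstein--Uhlenbeck process, for which at each \emph{fixed} $t>0$ the Gaussian density $k_t(x,\cdot)$ belongs to every $L^q(H,\mu)$; the blow-up of $\psi$ as $t\downarrow0$ is then innocuous, being absorbed into the strictly positive weight $w$.
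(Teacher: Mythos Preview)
Your proof is correct and follows essentially the same approach as the paper's: approximate $f$ by smooth functions (the paper uses exponential-type functions from \cite[Section 9.2]{DZ1} rather than $R_{1/n}f$), apply Girsanov to reduce the law of $Z_t^r$ to that of an Ornstein--Uhlenbeck process, and pass to the limit using the $L^q(\mu)$-integrability of the OU transition density $k_t(x,\cdot)$ for each $t>0$. The only cosmetic difference is that the paper handles the blow-up at $t=0$ by restricting to $[\epsilon,T]$ and letting $\epsilon\downarrow0$, rather than via your weight $w(t)=(1+\psi(t))^{-1}$; your device of first establishing $dt\otimes\P$-a.e.\ convergence of $\int_0^1|Df_n(Z_t^r)-Df(Z_t^r)|\,dr$ and only then multiplying by the a.s.\ finite factor $|X_t-Y_t|$ is in fact slightly cleaner than the paper's handling of that factor.
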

\begin{pf} Formula (\ref{tes2}) is meaningful if we consider a
Borel representative of $Df \in L^2(\mu)$; that is, we consider a
Borel function $g\dvtx H \to H$ such that $g = Df$, $\mu$-a.e.

Clearly the right-hand side of (\ref{tes2}) is independent of this chosen
version because (setting again $Z_t^r = r X_t + (1-r)Y_t$) it is
equal to
\[
\biggl\langle\int_0^1 Df
\bigl(Z_t^r (\omega)\bigr) \,dr, X_t(\omega)-
Y_t (\omega) \biggr\rangle,
\]
and for a Borel function $g\dvtx H \to H$ with
$g=0$ $\mu$-a.e., we have, for any $T>0$, $\epsilon\in(0,T]$,
\[
{\mathbb E} \biggl[ \int_{\epsilon}^T\int
_0^1 \bigl| g \bigl(Z_t^r
\bigr)\bigr| \,dr \,dt \biggr] = \int_{\epsilon}^T \int
_0^1 {\mathbb E} \bigl|g \bigl(Z_t^r
\bigr)\bigr| \,dr \,dt =0,
\]
since by the Girsanov theorem (see Theorem~\ref{tA1})
the law of the
r.v. $Z_t^r$, $r \in[0,1]$, is absolutely continuous with respect
to $\mu$.

Let us prove (\ref{tes2}). By~\cite{DZ1}, Section 9.2, there exists a
sequence of functions $(f_n) \subset C^{\infty}_b (H)$ (each $f_n$
is also of exponential type) such that
%
%e81 #&#
%
\begin{equation}
\label{cd} f_n \to f,\qquad Df_n \to Df \qquad\mbox{in }
L^2(\mu)
\end{equation}
as $n \to\infty$. We fix $t>0$ and
write, for any $n \ge1$,
%
%e82 #&#
%
\begin{equation}
\label{frtt} f_n(X_t) - f_n(Y_t)
= \int_0^1 \bigl\langle Df_n
\bigl(r X_t + (1-r)Y_t\bigr), X_t-
Y_t \bigr\rangle\,dr.
\end{equation}
%
% Since the law of the random variables $X_t$
% and $Y_t$ are equivalent to $\mu$, we get, $\P$-a.s.,
For a fixed $T>0$ we
will show that, as $n \to\infty$, the left-hand
side and the right-hand side of (\ref{frtt}) converge
in $L^1([\epsilon, T] \times\Omega,dt \otimes
\P)$, respectively, to the left-hand
side and the right-hand side of (\ref{tes2}) for all $\epsilon
\in(0,T]$.

We only prove convergence of the right-hand side
of (\ref{frtt}) (the convergence of the left-hand side is similar and simpler).

Fix $\epsilon\in(0,T]$.
We first consider the case in which $x$ is deterministic. We get, using
the Girsanov theorem (see Theorem~\ref{tA1}), as in the proof of
Proposition~\ref{stop},
\begin{eqnarray*}
a_n:\!&=& {\mathbb E} \biggl[ \int_{\epsilon}^T
\int_0^1 \bigl|Df_n \bigl(r
X_t + (1-r)Y_t\bigr)- Df \bigl(r X_t +
(1-r)Y_t\bigr)\bigr|\\
&&\hspace*{172pt}{}\times \bigl| (X_t- Y_t) \bigr| \,dr \,dt \biggr]
\\
&\le& M \int_{\epsilon}^T \int_0^1
{\mathbb E} \bigl|Df_n \bigl(r X_t + (1-r)Y_t
\bigr)- Df \bigl(r X_t + (1-r)Y_t\bigr) \bigr| \,dr \,dt
\\
&\le& M' \int_0^1 {\mathbb E}
\biggl[ \rho^{-1/2}_r \rho^{1/2}_r \int
_{\epsilon}^T \bigl|Df_n \bigl(r
X_t + (1-r)Y_t\bigr)\\
&&\hspace*{109pt}{} - Df \bigl(r X_t +
(1-r)Y_t\bigr)\bigr| \,dt \biggr]\,dr
\\
&\le& M \biggl(\int_0^1 {\mathbb E} \bigl[
\rho^{-1}_r \bigr] \,dr \biggr)^{1/2} \biggl(\int
_0^1 \int_{\epsilon}^T
{\mathbb E}\bigl[ \bigl|Df_n ( U_t)- Df (U_t)\bigr|^2
\bigr] \,dt \,dr \biggr)^{1/2}
\\
&\le& C \biggl( \int_{\epsilon}^T {\mathbb E}\bigl[
\bigl|Df_n ( U_t)- Df (U_t)\bigr|^2 \bigr]
\,dt \biggr)^{1/2},
\end{eqnarray*}
where $U_t$ is an OU process starting at $x$. By
\cite{DZ1}, Lemma 10.3.3, we know that, for $t>0$, the law of $U_t$ has a
positive density $\pi(t, x, \cdot)$ with respect to~$\mu$,
bounded on $[\epsilon, T] \times H$.

It easily follows [using (\ref{cd})] that
$ \int_{\epsilon}^T {\mathbb E}[ |Df_n ( U_t)-
Df (U_t)|^2 ] \,dt$ $\to0$, as $n \to\infty$,
and so $a_n \to0$.

Similarly, one proves that
\[
\int_{\epsilon}^T {\mathbb E} \biggl[ \int
_0^1 \bigl| Df \bigl(r X_t +
(1-r)Y_t\bigr)\bigr| \,dr \biggr] \,dt < \infty.
\]
Now since $\epsilon\in(0,T]$ was arbitrary, the assertion follows
for every (nonrandom) initial condition $x \in H$.

Now let us consider the case in which $x$ is an
${\mathcal F}_0$-measurable r.v. Using Remark~\ref{obvio},
analogously, we find,
with $1/p + 1/p' =1 $ and $1<p<2$,
\begin{eqnarray*}
a_n &\le& M \int_0^1 \int
_{\epsilon}^T {\mathbb E} \bigl[ \rho^{-1/p}_r
\rho^{1/p}_r \bigl|Df_n \bigl(r X_t +
(1-r)Y_t\bigr)\\
&&\hspace*{108.3pt}{} - Df \bigl(r X_t + (1-r)Y_t
\bigr)\bigr| \bigr] \,dt\,dr
\\
&\le& M' \biggl(\int_0^1 {
\mathbb E} \bigl[ \rho^{-p'/p}_r \bigr] \,dr
\biggr)^{1/p'} \biggl(\int_0^1 \int
_{\epsilon}^T {\mathbb E}\bigl[ \bigl|Df_n (
U_t)- Df (U_t)\bigr|^p \bigr] \,dt \,dr
\biggr)^{1/p}
\\
&\le& C \biggl( \int_{\epsilon}^T {\mathbb E}\bigl[
\bigl|Df_n ( U_t)- Df (U_t)\bigr|^p \bigr]
\,dt \biggr)^{1/p},
\end{eqnarray*}
where $U_t$ is an OU process such that $U_0=x$, $\P$-a.s.
Using
(\ref{rit}) with $|Df_n -
Df |^p$ instead of $f$ and $\zeta' = 2/p$, as above, we
arrive at
\[
a_n \le C_{\epsilon} \| h_0\|_{L^{{2}/({2-p})}(\mu)}^{1/p}
\biggl( \int_H \bigl|Df_n ( x)- Df
(x)\bigr|^2 \mu(dx) \biggr)^{1/2},
\]
where $h_0$ denotes the density of the law of $x$ with respect to
$\mu$.
Passing to the limit, by (\ref{frtt}) we get $a_n \to0$. Then
analogously to the case where $x$ is deterministic, we complete the
proof.
\end{pf}
\end{appendix}

\section*{Acknowledgments}

The authors thank the unknown referee for helpful comments. Priola
and R\"ockner are grateful for the hospitality during a very fruitful
stay in Pisa in June 2011 during which a substantial part of this work
was done.

%suskaldyti doi

% imsref loaded by lrinkeviciute, 2012-08-17 14:55:28
% imsref loaded by lrinkeviciute, 2012-08-17 15:17:13
% imsref loaded by lrinkeviciute, 2012-08-17 15:24:41
% imsref loaded by lrinkeviciute, 2012-08-17 15:30:21

\printaddresses

\end{document}